\newtheorem{lemma}{Lemma}[section]
\newtheorem{proposition}{Proposition}[section]
\newtheorem{definition}{Definition}[section]
\newtheorem{theorem}{Theorem}
\newtheorem{corollary}[lemma]{Corollary}
\newtheorem{hypothesis}{Hypothesis}
\theoremstyle{remark}
\newtheorem{remark}[theorem]{Remark}
\newcommand{\N}{\mathbb{N}}
\newcommand{\R}{\mathbb{R}}
\newcommand{\Z}{\mathbb{Z}}
\newcommand{\C}{\mathbb{C}}
\newcommand{\E}{\mathbb{E}}
\newcommand{\Var}{\mathbb{V}\text{ar}}
\newcommand{\Cov}{\C\text{ov}}
\newcommand{\ov}[1]{\overline{#1}}
\newcommand{\p}{\partial}
\newcommand{\ind}{\mathbbm{1}}
\renewcommand{\t}{\widetilde}
\newcommand\floor[1]{\lfloor#1\rfloor}
\DeclareMathOperator{\supp}{supp}
\renewcommand{\centerdot}{\abullet}
\newcommand\abullet{{\scaleobj{0.6}{\bullet}}}
\renewcommand{\P}{\mathbb{P}}
\renewcommand{\deg}{\mathrm{deg}}
\title{Concentration for integrable directed polymer models}
\author{Christian Noack\thanks{Department of Mathematics, Cornell University} \and Philippe Sosoe\thanks{Department of Mathematics, Cornell University} }
\date{}
\begin{document}
\maketitle

\begin{abstract}
    In this paper, we consider four integrable models of directed polymers for which the free energy is known to exhibit KPZ fluctuations. A common framework for the analysis of these models was introduced in \cite{CN2018}. 
    
    We derive estimates for the central moments of the partition function, of any order, on the near-optimal scale $N^{1/3+\epsilon}$, using the iterative method we applied to the semi-discrete polymer in \cite{NS}.
    Among the innovations exploiting the invariant structure, we develop formulas for correlations between functions of the free energy and the boundary weights that replace the Gaussian integration by parts appearing in our previous paper \cite{NS}. 
\end{abstract}

\section{Introduction}
In this paper, we consider four models for 1+1 dimensional integrable polymers in random environment, and study the higher moments of the centered free energy: the log-gamma polymer, introduced by Sepp\"al\"ainen \cite{S}; the strict-weak polymer, which was simultaneously introduced and analyzed by Corwin-Sepp\"al\"ainen-Shen \cite{CSS} and O'Connell-Ortmann \cite{OO2015}; the beta random walk of Barraquand and Corwin \cite{BC}; and the inverse beta model introduced by Thiery and Le Doussal \cite{TD}.  

The models in question are distinguished because they each possess algebraic structure that has enabled the verification of several predictions regarding their fluctuations. These include upper and lower bounds for the variance of the free energy, of order $O(N^{2/3})$ (see \cite{S} for log-gamma and \cite{CN} for the three other models) as well as asymptotic Tracy-Widom distribution (see \cite{Borodin} for the log-gamma polymer, the original papers \cite{CSS, OO2015,BC} for the strict-weak polymer and beta random walk models, as well a formal argument for the inverse beta model in \cite{TD}).
Results of this type are characteristic of the KPZ universality class \cite{C}, and are expected to hold in a more general setting where the integrable structure is not available, but proving this is out of reach using current methods. 

We note that the techniques used to prove asymptotic Tracy-Widom distribution by relating the free energy to a Fredholm determinant are markedly different from those that have been used to obtain variance bounds starting with the work of Sepp\"al\"ainen \cite{S}. The ideas in that work have their origins in earlier work of Sepp\"al\"ainen and co-authors on fluctuations of one-dimensional interacting particle systems \cite{BS, BSK, BSQ, BCS}. Despite their power, the Bethe ansatz methods used to obtain the asymptotic distribution are not easily adapted to estimating the size of the central moments.

Here, we build on our previous paper on the O'Connell-Yor polymer \cite{OY}, a semi-discrete 1+1 dimensional polymer model, to obtain bounds of nearly optimal order for all the central moments of the free energy in the stationary version of each of the four models mentioned above. Our main result, Theorem \ref{theorem: Main 1}, states that for each $k \ge 1$, the $k$th central moment of the free energy in a system of size $O(N^2)$ is bounded by $O(N^{k/3+\epsilon})$, where the implicit constant depends on $\epsilon$. 

The proof in \cite{NS} proceeded by deriving a pair of inequalities which appear related to the physicists' \emph{KPZ scaling relations} and which enable an iterative proof of the bound for the order of fluctuations of the free energy by successive improvements starting from the trivial $O(N^{1/2})$ bound. A crucial idea was the repeated application of Gaussian integration by parts to relate cross-terms involving the "boundary Brownian motion" component of the free energy and the free energy itself to quenched cumulants of the first vertical jump of the polymers paths. This tool is not available in the discrete models we consider here. Nevertheless, we develop a substitute for it by introducing a sequence of polynomials which play a role analogous to that of Hermite polynomials for the O'Connell-Yor polymer, and allow us to derive formulas for the cumulants of the partition function in terms of quenched cumulants of the time of the first jump. Here, the Mellin transform framework introduced in \cite{CN2018} plays a central role. See Section \ref{sec: app-ibp}.

\subsection{The polymer model}
% We consider a class of 1+1 dimensional directed polymers on the integer lattice. 

To each edge $e$ of the $\Z_+^2$  lattice we assign a positive random weight. The superscripts 1 and 2 are used to denote horizontal and vertical edge weights, respectively. For $z\in \N^2$, let $Y^1_z$ and $Y^2_z$ denote the horizontal and vertical incoming edge weights, see Figure \ref{fig-weights}. We assume that the collection of pairs $\{(Y^1_z,Y^2_z)\}_{z\in \N^2}$ is independent and identically distributed with common distribution $(Y^1,Y^2)$, but do not insist that $Y^1_z$ is independent of $Y^2_z$. $\{(Y^1_z,Y^2_z)\}_{z\in \N^2}$ are the \emph{bulk weights}. For $x\in \N\times \{0\}$, let $R^1_x$ denote the horizontal incoming edge weight, and for $y\in \{0\}\times \N$, let $R^2_y$ denote the vertical incoming edge weight. We take the collections $\{R^1_x\}_{x\in \N\times \{0\}}$ and $ \{R^2_y\}_{y\in \{0\}\times \N}$ to independent and identically distributed, with common distributions $R^1$ and $R^2$. We refer to these as the \emph{horizontal} and \emph{vertical} \emph{boundary weights}, respectively. We further assume that the horizontal boundary weights, the vertical boundary weights, and the bulk weights are independent of each other. This assignment of edge weights is illustrated in Figure \ref{fig-weights}. We call 
\begin{equation}\label{environment}
\omega = \{R^1_x, R^2_y, (Y^1_z,Y^2_z): x\in \N\times \{0\}, y\in \{0\}\times \N, z\in \N^2\}
\end{equation}
the \emph{polymer environment}. We use $\P$ and $\E$ to denote the probability measure and corresponding expectation of the polymer environment.

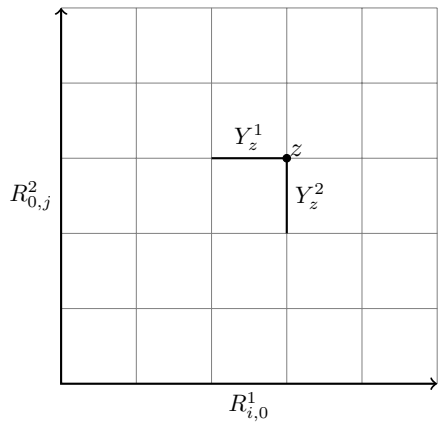
\begin{figure}[ht]
  \centering
  \begin{tikzpicture}
    
   \draw [help lines] (0,0) grid (5,5);	  
   \draw [thick, <->] (0,5) -- (0,0) -- (5,0);
   \draw [thick] (2,3) -- (3,3) -- (3,2);
   \node [left,scale=.9] at (0,2.5) {$R^2_{0,j}$};
   \node [below,scale=.9] at (2.5,0) {$R^1_{i,0}$};
   \node [above,scale=.9] at (2.5,3) {$Y^1_z$};
   \node [right,scale=.9] at (3,2.5) {$Y^2_z$};
   \draw[fill] (3,3) circle [radius=.05];
   \node [above right] at (2.91,2.91) {$z$};

  \end{tikzpicture}
    \caption{Assignment of edge weights.}
  \label{fig-weights}
\end{figure}

The weight of a path is given by the product of the weights along its edges. For $(m,n)\in
\Z_+^2\setminus \{(0,0)\}$ we define a probability measure on all up-right paths from
$(0,0)$ to $(m,n)$. See Figure \ref{up-right path} for an example of an up-right path. Let $\Pi_{m,n}$ denote the collection of all such
paths. We identify paths $x_\abullet = (x_0, x_1, \ldots, x_{m+n})$ either
their sequence of vertices or their sequence
of edges $(e_1, \ldots, e_{m+n})$, where $e_i = \{x_{i-1}, x_i\}$, as convenient. Define the quenched polymer measure on $\Pi_{m,n}$,
\[ 
Q_{m,n} (x_\abullet) := \frac{1}{Z_{m,n}}
\prod_{i=1}^{m+n} \omega_{e_i}, 
\] 
where $\omega_e$ is the weight associated to the edge $e$ and
\[ 
Z_{m,n} := \sum_{x_\abullet \in \Pi_{m,n}} \prod_{i=1}^{m+n}
\omega_{e_i} 
\] 
is the associated partition function. At the origin, define $Z_{0,0}:=1$. Taking the expectation $\E$ of the quenched measure with respect to the edge weights gives the annealed measure on $\Pi_{m,n}$,
\begin{equation}
P_{m,n}(x_\centerdot) :=\E[Q_{m,n}(x_\centerdot)].\label{def: annealed prob}
\end{equation}
The annealed expectation will be denoted by $E_{m,n}$.

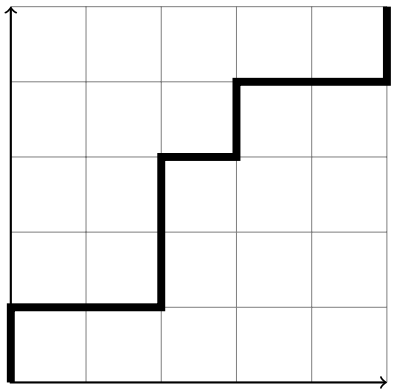
\begin{figure}[ht]
\centering
	\begin{tikzpicture}
  		\draw [help lines] (0,0) grid (5,5);	  
   		\draw [thick, <->] (0,5) -- (0,0) -- (5,0);
   		\draw [line width=3pt] (0,0) -- (0,1) -- (1,1) -- (2,1) -- (2,2) -- (2,3) -- (3,3) -- (3,4) --(4,4) -- (5,4) -- (5,5);
	\end{tikzpicture}
	\caption{An up-right path from $(0,0)$ to $(5,5)$.}
    \label{up-right path}
\end{figure}
We specify the edge weight distributions for the four stationary polymer models.
The notation $X\sim \text{Ga}(\alpha,\beta)$ is used to denote that a random variable is gamma$(\alpha,\beta)$ distributed, i.e.\ has density 
\[\frac{1}{\Gamma(\alpha)} \beta^\alpha x^{\alpha-1}e^{-\beta x}\] supported on $(0,\infty)$, where $\Gamma(\alpha) = \int_0^\infty x^{\alpha-1} e^{-x} dx$ is the gamma function. $X\sim \text{Be}(\alpha,\beta)$ is used to say that $X$ is beta$(\alpha,\beta)$ distributed, i.e.\ has density \[\frac{\Gamma(\alpha+\beta)}{\Gamma(\alpha) \Gamma(\beta)} x^{\alpha-1}(1-x)^{\beta-1},\] supported on $(0,1)$. We then use $X\sim \text{Ga}^{-1}(\alpha,\beta)$ and $X\sim \text{Be}^{-1}(\alpha,\beta)$ to denote that $X^{-1}\sim \text{Ga}(\alpha,\beta)$ and $X^{-1}\sim \text{Be}(\alpha,\beta)$, respectively. We also use $X\sim \left(\text{Be}^{-1}(\alpha,\beta) -1\right)$ to denote that $X+1\sim\text{Be}^{-1}(\alpha,\beta)$.

Each of the four models we consider is obtained by choosing the distribution of the boundary and bulk weights according to one of the four following specifications:
\begin{itemize}
\item \textbf{Inverse-gamma (IG)}: This is also known as the log-gamma model. Assume $\mu>\theta>0, \, \beta>0$ and
\begin{equation}
\begin{gathered}
R^1 \sim \text{Ga}^{-1}(\mu-\theta,\beta)\qquad  R^2\sim \text{Ga}^{-1}(\theta,\beta)\\
(Y^1,Y^2)= (X,X) \qquad \text{where} \qquad X\sim \text{Ga}^{-1}(\mu,\beta).
\end{gathered}\label{model-IG}
\end{equation}
\item \textbf{Gamma (G)}: This is also known as the strict-weak model. Assume $\theta,\,\mu,\,\beta>0$ and 
\begin{equation}
\begin{gathered}
R^1\sim \text{Ga}(\mu+\theta,\beta)\qquad R^2\sim \text{Be}^{-1}(\theta,\mu)\\
(Y^1,Y^2) = (X, 1) \qquad \text{where} \qquad X\sim \text{Ga}(\mu,\beta).
\end{gathered}\label{model-G}
\end{equation}
\item \textbf{Beta (B)}: 
Assume $\theta,\,\mu,\,\beta>0$ and 
\begin{equation}
\begin{gathered}
R^1\sim \text{Be}(\mu+\theta,\beta)\qquad R^2 \sim \text{Be}^{-1}(\theta,\mu)\\
(Y^1,Y^2) = (X,1-X) \qquad \text{where} \qquad X\sim \text{Be}(\mu,\beta).
\end{gathered}\label{model-B}
\end{equation}
\item \textbf{Inverse-beta (IB)}:
Assume $\mu>\theta>0,\, \beta>0$ and 
\begin{equation}
\begin{gathered}
R^1 \sim \text{Be}^{-1}(\mu-\theta,\beta)\qquad R^2 \sim \left(\text{Be}^{-1}(\theta,\beta+\mu-\theta)-1\right)\\
(Y^1,Y^2) = (X, X-1) \qquad \text{where} \qquad X\sim \text{Be}^{-1}(\mu,\beta).
\end{gathered}\label{model-IB}
\end{equation}
\end{itemize}
Note that each of these four choices in fact generates a family of models by choosing different values of the parameters $\mu, \theta,\beta$.
The name of each model refers to the distribution of the bulk weights. We call these models the \textbf{four basic beta-gamma models}.

\subsection{Main Result}
Having defined the models we will consider, we are now ready to state our main result. 
Given a path $x_\centerdot\in \Pi_{m,n}$, define the \emph{exit points} of the path from the horizontal and vertical axes by
\begin{align}
t_1 := \max\{i:(i,0)\in x_\centerdot\} \qquad \text{and} \qquad t_2:= \max\{j:(0,j)\in x_\centerdot\}.\label{def exit points}
\end{align}

\begin{theorem}\label{theorem: Main 1}
Assume that the polymer environment has edge weight distributions $R^1, R^2, (Y^1,Y^2)$ as in one of \eqref{model-IG} through \eqref{model-IB}, and let $(m,n)=(m_N,n_N)_{N=1}^\infty$ be a sequence such that
\begin{align}
|m_N-N \Var[\log R^2]|\leq \gamma N^{2/3} \qquad \text{and} \qquad |n_N-N\Var[\log R^1]|\leq \gamma N^{2/3} \label{direction}
\end{align}
for some fixed $\gamma>0$.  Then for every $\epsilon >0$ and $p>0$, there exists a constant $C=C(\epsilon,p)>0$ such that for any $n\in \N$,
\begin{align}
    \E[|\ov{\log Z_{m,n}}|^p]&\leq C N^{\frac{1}{3}p+\epsilon}\quad \text{ and }\label{eqn: Main theorem 1}\\
    E_{m,n}[(t_j)^p]&\leq C N^{\frac{2}{3}p+\epsilon}\quad \text{ for both } j=1,2.\label{eqn: Main theorem 2}
\end{align}

\end{theorem}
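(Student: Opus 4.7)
The plan is to adapt the iterative bootstrap scheme developed in \cite{NS} for the O'Connell-Yor polymer. At its heart is a pair of inequalities that together mimic the KPZ scaling relation: one bounding the central $p$-th moment of $\log Z_{m,n}$ by the annealed $p$-th moment of the exit points $t_j$ (normalized by an appropriate power of $N$), and a converse bounding $E_{m,n}[t_j^p]$ by central moments of $\log Z_{m,n}$ plus a lower-order error. Starting from a crude concentration bound $\E[|\ov{\log Z_{m,n}}|^p]\lesssim N^{p/2}$, available from a log-Sobolev or Efron-Stein argument on the bulk and boundary weights, iterating these inequalities shrinks the exponent on the left toward $p/3$ while simultaneously driving the exit-point exponent toward $2p/3$.

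\textbf{Key ingredients.} The first inequality is a consequence of the stationary (Burke-type) structure of the boundary weights in the four models: differentiating the partition function in the direction parameter $\theta$ and integrating by parts in the joint distribution of $\{R^1_x,R^2_y\}$ yields an identity for the cumulants of $\log Z_{m,n}$ in terms of quenched cumulants of $t_1$ and $t_2$. The IBP step here cannot be Gaussian; instead, it rests on a family of polynomials $p_k$ satisfying $\E[f(R)\,p_k(\log R)]=\partial_\theta^k\,\E[f(R)]$, constructed from the Mellin transform $\E[R^s]$ of the boundary weight via the gamma-function identities of the framework in \cite{CN2018}. These polynomials play the role of Hermite polynomials for the gamma, beta, inverse-gamma and inverse-beta distributions that appear in \eqref{model-IG}--\eqref{model-IB}. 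The second inequality comes from a variance/random-walk representation of $E_{m,n}[t_j^p]$ in which the polynomial IBP again converts moments of the environment into moments of $\log Z_{m,n}$.

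\textbf{Iteration and main obstacle.} Combining these two estimates with H\"older and Cauchy-Schwarz produces a self-improving inequality of the form $\E[|\ov{\log Z_{m,n}}|^p]\leq C N^{p/3+\epsilon'}+(\text{lower order})$ as soon as any slightly-worse-than-sharp bound is in hand. Starting from the crude $N^{p/2}$ bound, a finite number of iterations converges to the claimed $N^{p/3+\epsilon}$ for every $p$, and yields $E_{m,n}[t_j^p]\lesssim N^{2p/3+\epsilon}$ as a byproduct. The principal novel contribution over \cite{NS}, and the main obstacle, is the construction and calibration of the polynomial IBP formulas: unlike the Gaussian case the $p_k$ are built from digamma and polygamma identities, and the error terms generated by each IBP step must be tracked carefully so that they can be absorbed back into the iteration rather than destroying the gain in exponent. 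Once these formulas are established, the combinatorial structure of the bootstrap is parallel to that of \cite{NS}.
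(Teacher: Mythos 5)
Your proposal captures the paper's overall architecture correctly: an iterative bootstrap between central moments of $\log Z_{m,n}$ and annealed moments of the exit points, powered by a polynomial integration-by-parts identity built from the Mellin transform that substitutes for Gaussian IBP. The identification of the polynomials $p_k$ satisfying $\E[f\cdot p_k]=\partial_a^k\,\E[f]$ as the key new tool, and the observation that the combinatorics of the bootstrap then parallels \cite{NS}, are both on target.

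Two places where your sketch deviates from the paper and would need repair. First, the initial crude bound $\E[|\ov{\log Z_{m,n}}|^p]\lesssim N^{p/2}$: you propose log-Sobolev or Efron--Stein on the environment, but the weights here are gamma/beta-type (heavy-tailed in log, not Gaussian), so an LSI is not available off the shelf, and Efron--Stein only controls the variance without further work for higher moments. The paper instead uses the down-right (Burke) property to write $\log Z_{m,n}=S_m+E_n$ exactly, where $S_m$ and $E_n$ are independent i.i.d.\ sums, so the $N^{k/2}$ bound for every even $k$ is immediate from classical moment bounds for i.i.d.\ sums (Proposition \ref{prop: 1st step in induction}). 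This is the cleanest route and is the one that actually exploits the stationarity you already invoke. Second, your description of the converse inequality (bounding $E_{m,n}[t_j^p]$ by central moments of $\log Z$) as coming from a ``variance/random-walk representation'' is not quite what happens; the paper's mechanism (following Sepp\"al\"ainen and \cite{CN2018}) is a parameter-perturbation argument: one shifts the boundary parameter by $\lambda\sim u/N$, compares $\log Z$ under the tilted and untilted measures, and uses a Chebyshev-type bound on the difference, see Lemma and Corollary around \eqref{eqn: assumption - moments}. Without spelling out this tilting argument, the second half of the bootstrap is not actually closed. Beyond these two points, the proposal also elides the truncation of $t_1$ at scale $r\approx N^{2/3+\delta/2+\epsilon}$ (Lemma \ref{lemma: truncation}) and the degree bookkeeping for $p_n(\cdot\,;r)$ (Proposition \ref{prop: poly-bound}), both of which are needed to make the error terms absorbable at each iteration rather than merely plausible.
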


We also obtain exact formulas for the cumulants of the free energy, see Corollary \ref{Cor: exact formula for cumulant of free energy}. 

\subsection{Outline of the paper}
In Section \ref{sec: prelim}, after establishing some basic notation, we recall the Mellin transform framework introduced in \cite{CN2018}, where it was noticed that the four basic beta-gamma models can be treated simultaneously.

In Section \ref{section DRP}, we recall the "down-right" property shared by the four basic beta-gamma models. This is a  consequence of the stronger Burke property, and implies in particular that the free energy can be written as the sum of two i.i.d. sums of order $O(N)$ see \eqref{eqn: NSEW}. Understanding the fluctuations of the free energy becomes equivalent to understanding the correlation between these two sums, or equivalently the correlation between one of them and the free energy. This is manifested in the expansion for the cumulants of the free energy appearing in Lemma \ref{lemma: cumulant of log Z to joint cumulants}.

In Section \ref{sec: central-moments}, we develop a formula of "integration by parts" type, which expresses certain correlations appearing in the expansion for the cumulant in terms of derivatives of expectations of moments of the free energy with respect to the parameter in the boundary weights. See Lemma \ref{lemma: higher derivatives}. We use this to obtain formulas relating the the cumulants of the free energy to expectations of productions of quenched cumulants of $t_1$, the first jump in the system. See Corollary \ref{Cor: exact formula for cumulant of free energy}.

In Section \ref{sec: estimates for central}, we prove our main result. The key idea here is that the formulas obtained in Section \ref{sec: central-moments} allow one to get improved estimates (compared to the trivial $O(N^{1/2})$ for the moments of the free energy given estimates for the annealed moments of $t_1$. See Lemma \ref{improved central moment}. Conversely, an inequality due essentially to Sepp\"al\"ainen \cite{S} relates the moments of $t_1$ to moments of the centered free energy. Iterating through these two inequalities a finite number of times, we can obtain bounds that are arbitrarily close to order $N^{1/3}$.

\section{Preliminaries and notation}\label{sec: prelim}
\subsection{Notation}
We let $\N= \{1,2,\ldots\}$, $\Z_+ = \{0,1,\ldots\}$, while $\R$ denotes the real numbers. 

Let $\floor{x}$ denote the greatest integer less than or equal to $x$. Let $\vee$ and $\wedge$ denote maximum and minimum, respectively:
\begin{align*}
    a\vee b &= \max\{a,b\},\\
    a\wedge b &= \min\{a,b\}.
\end{align*}

Given a real valued function $f$, let $\supp(f)= \{x: f(x)\neq 0\}$ denote the support of the function $f$ (note that we do not insist on taking the closure of this set). 

Given a random variable $X$ with finite expectation, we let 
\[\overline{X}= X-\E[X].\]
The symbol $\otimes$ is used to denote (independent) product distribution.

\subsection{The Mellin transform framework}\label{section Mellin}
Here we introduce a framework, developed in \cite{CN2018}, which allows us to treat the four basic beta-gamma models simultaneously.

Given a function $f:(0,\infty)\rightarrow[0,\infty)$, write $M_f$ for its Mellin transform
 \[
M_f(a):= \int_0^\infty x^{a-1}f(x)\mathrm{d}x
\]
for any $a\in\R$ such that the integral converges. Define 
\[
D(M_f):=\text{interior}(\{a\in\R : 0<M_f(a)<\infty\}).
\]
\begin{definition}
Given a function $f:(0,\infty)\to [0,\infty)$ such that $D(M_f)$ is non-empty, we define a family of densities on $(0,\infty)$ parametrized by $a\in D(M_f)$:
\begin{equation}\label{equation Mellin density}
\rho_{f,a}(x):= M_f(a)^{-1}x^{a-1}f(x).
\end{equation}
We write $X\sim m_f(a)$ to denote that the random variable $X$ has density $\rho_{f,a}$.
\end{definition}

\begin{remark}\label{remark - Mellin consequences}
If $f:(0,\infty)\to [0,\infty)$ is such that $D(M_f)$ is non-empty, then $M_f$ is $C^\infty$ throughout $D(M_f)$. Furthermore, if $X\sim m_f(a)$, then 
\begin{enumerate}
\item $\log X$ has finite exponential moments. That is, there exists some $\epsilon>0$ such that 
\[
\E[e^{\epsilon |\log X|}]\leq \E[X^{\epsilon}]+\E[X^{-\epsilon}]=\frac{M_f(a+\epsilon)+M_f(a-\epsilon)}{M_f(a)}<\infty.
\]
\item For all $k\in \N$,
\[\frac{\p^k}{\p a^k} M_f(a)=M_f(a) \E[(\log X)^k].\]
\item $\kappa_k(\log X)=\psi_k^f(a)$, where 
\[
\psi^f_k(a):=\frac{\p^{k+1}}{\p a^{k+1}}\log{M_f(a}) \text{ for } k\in\mathbb{Z}_+.
\]
\end{enumerate}
\end{remark}

%The following remark says that random variables with densities of the form \eqref{equation Mellin density} are closed under inversion.

%\begin{remark}\label{remark - Mellin inversion}
%If $f:(0,\infty)\rightarrow [0,\infty)$ is such that $D(M_f)$ is non-empty and $g(x):=f(\frac{1}{x})$ for $x\in (0,\infty)$, then for all $a\in D(M_f)$,
%\begin{enumerate}
%\item  $X\sim m_f(a) \Leftrightarrow X^{-1}\sim m_g(-a)$,
%\item $M_f(a)=M_g(-a)$  and therefore  $D(M_g)=-D(M_f)$,
%\item  $\psi_k^f(a)=(-1)^{k+1} \psi_k^g(-a)$ for all $k\in \Z_+$.
%\end{enumerate}
%\end{remark}

\subsection{The four basic beta-gamma models are Mellin-type}\label{sec: four mellin}

The random variables appearing in each of the four basic beta-gamma models have densities of the form \eqref{equation Mellin density}, for various choices of $f$, which we specify here. In the table below, we assume $b>0$ and $a\in D(M_f)$.
\begin{figure}[H]
\begin{equation*}
\begin{array}{|c|c|}
		\hline  f(x) &  m_f(a)   \\
		\hline \hline
		 e^{-bx} &  \text{Ga}(a,b) \\ \hline
		 e^{-b/x} & \text{Ga}^{-1}(-a,b) \\ \hline
		 (1-x)^{b-1}\ind_{\{0<x<1\}} &\text{Be}(a,b) \\ \hline
		 (1-\frac{1}{x})^{b-1}\ind_{\{x>1\}} &\text{Be}^{-1}(-a,b) \\ \hline
		 (\frac{x}{x+1})^{b} & \text{Be}^{-1}(-a,b+a)-1 \\ \hline
	\end{array}
\end{equation*}
% \vspace{6pt}
% \begin{equation*}
% \begin{array}{|c|c|}
% \hline f(x) & \psi_n^f(a)\\
% \hline \hline
% e^{-bx} & \Psi_n(a) - \delta_{n,0}\log b  \\ \hline
% e^{-b/x} & (-1)^{n+1}(\Psi_n(-a)-\delta_{n,0}\log b) \\ \hline
% (1-x)^{b-1}\ind_{\{0<x<1\}} & \Psi_n(a)-\Psi_n(a+b)   \\ \hline
% (1-\frac{1}{x})^{b-1}\ind_{\{x>1\}} & (-1)^{n+1}(\Psi_n(-a)-\Psi_n(-a+b))   \\ \hline
% (\frac{x}{x+1})^{b} & \Psi_n(a+b) + (-1)^{n+1}\Psi_n(-a) \\ \hline
% \end{array}
% \end{equation*}
% \caption{Mellin framework data for the distributions appearing in our basic beta-gamma models.}
 \label{table f}
\end{figure}

To express the distribution of the polymer environment in each of the four models given in \eqref{model-IG} through \eqref{model-IB} within the above framework, we let 
\begin{equation}\label{polymer environment distribution}
	(R^1,R^2,X) \sim m_{f^1}(a_1) \otimes m_{f^2}(a_2) \otimes m_{f^1}(a_3),
\end{equation}
where the functions $f^1$, $f^2$ and parameters $a_j$, $j=1,2,3$ are given in the table in Figure \ref{table 4 models}. Recall that in each of the models, $(Y^1,Y^2)$ are given in terms of $X$. For Table \ref{table 4 models} we assume $\mu,\beta>0$.
\begin{figure}[H] 
\[
	\begin{array}{|l||c|c|c|l|}
		\hline
		\text{Model} & f^1(x) & f^2(x) & (a_1,a_2,a_3)& \\ \hline \hline
		\text{IG} & e^{-\beta/x} & e^{-\beta/x} & (\theta-\mu,-\theta,-\mu)& \theta\in(0,\mu) \\ \hline 
		\text{G} & e^{-\beta x} & (1-\frac{1}{x})^{\mu-1}\ind_{\{x>1\}} & (\mu +\theta,-\theta,\mu) & \theta\in(0,\infty)\\ \hline
		\text{B} & (1-x)^{\beta-1}\ind_{\{0<x<1\}} & (1-\frac{1}{x})^{\mu-1}\ind_{\{x>1\}} & (\mu +\theta,-\theta,\mu)& \theta\in (0,\infty) \\ \hline
		\text{IB} &  (1-\frac{1}{x})^{\beta-1}\ind_{\{x>1\}} & (\frac{x}{x+1})^{(\beta+\mu)} & (\theta-\mu,-\theta,-\mu)&\theta\in(0,\mu) \\ \hline 
	\end{array}
\]
\caption{Functions and parameters to fit the four basic beta-gamma models into the Mellin framework.}
\label{table 4 models}
\end{figure}

% \begin{figure}[H] 
% \[
% 	\begin{array}{|l||c|c|c|l|}
% 		\hline
% 		\text{Model} & f^1(x) & f^2(x) & (a_1,a_2,a_3)& \\ \hline \hline
% 		\text{IG} & e^{-\beta/x} & e^{-\beta/x} & (\theta-\mu,-\theta,-\mu)& \theta\in(0,\mu) \\ \hline 
% 		\text{G} & e^{-\beta x} & (1-\frac{1}{x})^{\mu-1}\ind_{\{x>1\}} & (\mu +\theta,-\theta,\mu) & \theta\in(0,\infty)\\ \hline
% 		\text{B} & (1-x)^{\beta-1}\ind_{\{0<x<1\}} & (1-\frac{1}{x})^{\mu-1}\ind_{\{x>1\}} & (\mu +\theta,-\theta,\mu)& \theta\in (0,\infty) \\ \hline
% 		\text{IB} &  (1-\frac{1}{x})^{\beta-1}\ind_{\{x>1\}} & (\frac{x}{x+1})^{(\beta+\mu)} & (\theta-\mu,-\theta,-\mu)&\theta\in(0,\mu) \\ \hline 
% 	\end{array}
% \]
% \caption{Functions and parameters to fit the four basic beta-gamma models into the Mellin framework.}
% \label{table 4 models}
% \end{figure}

When the polymer environment is as in \eqref{polymer environment distribution} with parameters $(a_1, a_2)$, we use $\P^{(a_1,a_2)}$, $\E^{(a_1,a_2)}$, $\Var^{(a_1,a_2)}$, $\Cov^{(a_1,a_2)}$ in place of $\P$, $\E$, $\Var$, $\Cov$ respectively.

\begin{remark}\label{remark parameter DR property}
For each fixed value of the bulk parameter $a_3$, we obtain a family of models with boundary parameters $a_1$ and $a_2$ satisfying $a_1 + a_2 = a_3$.
\end{remark}

\section{The down-right property}\label{section DRP}
Write $\alpha_1=(1,0)$, $\alpha_2=(0,1)$. For $k=1,2$ define ratios of partition functions
\begin{align*}
R^k_x:=\frac{Z_x}{Z_{x-\alpha_k}}\qquad \text{for all  }x\text{ such that }  x-\alpha_k\in\mathbb{Z}_+^2.
\end{align*}
Note that these extend the definitions of $R^1_{i,0}$ and $R^2_{0,j}$, since for example $Z_{i,0}=\prod_{k=1}^{i} R^1_{k,0}$.
We say that $\pi=\{\pi_k\}_{k\in\mathbb{Z}}$ is a down-right path in $\Z_+^2$ if $\pi_k\in\mathbb{Z}_+^2$ and $\pi_{k+1}-\pi_k\in\{\alpha_1, -\alpha_2\}$ for each $k\in \Z$. To each edge along a down-right path we associate the random variable
\[
\Lambda_{\{\pi_{k-1},\pi_k\}} :=
\begin{cases}
R^1_{\pi_k} & \text{ if } \{\pi_{k-1},\pi_k\} \text{ is horizontal,}\\
R^2_{\pi_{k-1}} & \text{ if } \{\pi_{k-1},\pi_k\} \text{ is vertical}.
\end{cases}
\]
The following definition is a weaker form of the Burke property, see \cite[Theorem 3.3]{S}. 
\begin{definition}\label{def: down right property}
Say the polymer model has the {\rm down-right property} if for all down-right paths $\pi=\{\pi_k\}_{k\in\mathbb{Z}}$, the random variables 
\begin{equation*}
\Lambda(\pi) := \{\Lambda_{\{\pi_{k-1},\pi_k\}}: k\in\mathbb{Z}\}
\end{equation*}
are independent and each $R^1_{\pi_k}$ and $R^2_{\pi_k}$ appearing in the collection are respectively distributed as $R^1$ and $R^2$.
\end{definition}
\begin{proposition}\label{proposition 4 models have DR property}
	Each of the four basic beta-gamma models, \eqref{model-IG} through \eqref{model-IB}, possesses the down-right property.
\end{proposition}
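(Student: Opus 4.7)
My plan is to prove the down-right property by reducing it to a local ``Burke-type'' identity at each unit square, then verifying this identity uniformly via the Mellin framework of Section \ref{section Mellin}. The three steps are: (i) establish the property for a reference ``axis'' down-right path directly from the i.i.d.\ assumption on boundary weights; (ii) show the property is preserved under an elementary flip at a single unit square; (iii) conclude by the standard combinatorial fact that any two down-right paths between the same endpoints differ by a finite sequence of such flips.

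For step (i), take the path $\pi_k=(0,-k)$ for $k\le 0$ and $\pi_k=(k,0)$ for $k\ge 0$. Along this path, the edge weights $\Lambda_{\{\pi_{k-1},\pi_k\}}$ are exactly the boundary weights $R^2_{0,j}$ (on the descending portion) and $R^1_{i,0}$ (on the east-going portion), which are independent with the right marginals by assumption. For step (ii), consider a unit square $\{i,i{+}1\}\times\{j,j{+}1\}$. The two down-right segments from $(i,j{+}1)$ to $(i{+}1,j)$ carry edge weights $(R^2_{i,j+1}, R^1_{i+1,j})$ (SW route through $(i,j)$) or $(R^1_{i+1,j+1}, R^2_{i+1,j+1})$ (NE route through $(i{+}1,j{+}1)$). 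The partition-function recursion $Z_{i+1,j+1}= Y^1_{i+1,j+1} Z_{i,j+1} + Y^2_{i+1,j+1} Z_{i+1,j}$ yields
\begin{equation*}
R^1_{i+1,j+1} = Y^1_{i+1,j+1} + Y^2_{i+1,j+1}\,\frac{R^1_{i+1,j}}{R^2_{i,j+1}}, \qquad R^2_{i+1,j+1} = Y^2_{i+1,j+1} + Y^1_{i+1,j+1}\,\frac{R^2_{i,j+1}}{R^1_{i+1,j}}.
\end{equation*}
The local identity to prove is then: if $R^1_{i+1,j}$, $R^2_{i,j+1}$, and $(Y^1_{i+1,j+1},Y^2_{i+1,j+1})$ are mutually independent with the prescribed marginals, then $(R^1_{i+1,j+1}, R^2_{i+1,j+1})$ is again independent with marginals distributed as $(R^1,R^2)$.

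The main obstacle is step (ii), which requires an explicit distributional computation depending on the density. This is precisely where the Mellin framework pays off: expressing the environment in the uniform form \eqref{polymer environment distribution} and exploiting the parameter constraint $a_1+a_2=a_3$ of Remark \ref{remark parameter DR property}, the local Burke identity becomes a single change-of-variables computation involving the densities $f^1, f^2$ of Figure \ref{table 4 models}, uniform across the four models. This recovers the model-specific Burke-type identities already established in \cite{S,CSS,BC,TD} via a single unified calculation. The combinatorial induction in step (iii) is classical, so the whole argument hinges on the single Mellin-type identity verified in step (ii).
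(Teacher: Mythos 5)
Your reduction --- axis base case, preservation under an elementary flip, combinatorial induction --- is the standard route to Burke/down-right properties, and the flip recursion you derive from $Z_{i+1,j+1}= Y^1_{i+1,j+1} Z_{i,j+1} + Y^2_{i+1,j+1} Z_{i+1,j}$,
\begin{equation*}
R^1_{i+1,j+1} = Y^1_{i+1,j+1} + Y^2_{i+1,j+1}\,\frac{R^1_{i+1,j}}{R^2_{i,j+1}},\qquad R^2_{i+1,j+1} = Y^2_{i+1,j+1} + Y^1_{i+1,j+1}\,\frac{R^2_{i,j+1}}{R^1_{i+1,j}},
\end{equation*}
is set up correctly. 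Note, though, that the paper itself gives no argument for this proposition: it simply defers to Proposition 2.3 of \cite{CN2018}, which in turn assembles the model-by-model Burke properties already established in \cite{S}, \cite{CSS}, \cite{BC}, \cite{TD}. So you are sketching an independent proof, not reproducing the paper's.

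The genuine gap is in your step (ii). You correctly identify the local distributional identity as the crux, but then assert it rather than prove it, and the assertion that the Mellin framework reduces it to ``a single change-of-variables computation uniform across the four models'' is not substantiated and is almost certainly too optimistic. The Mellin parameterization unifies the \emph{marginal} laws of $R^1,R^2,X$ and the constraint $a_1+a_2=a_3$, but the flip map also depends on the \emph{model-specific} relation between $(Y^1,Y^2)$ and the scalar $X$: $Y^1=Y^2=X$ in IG, $(Y^1,Y^2)=(X,1)$ in G, $(X,1-X)$ in B, $(X,X-1)$ in IB. Substituting these into your recursion gives four genuinely different algebraic maps $(R^1_{i+1,j},R^2_{i,j+1},X)\mapsto (R^1_{i+1,j+1},R^2_{i+1,j+1})$, so the Jacobian/density verification is four calculations, not one; a complete proof would have to carry out each (or cite the originals, as \cite{CN2018} does). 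A smaller issue is your step (iii): down-right paths here are bi-infinite and need not coincide with the axis path outside a bounded box (e.g.\ a path whose eventual rays are $x=2$ and $y=0$), so ``any two down-right paths between the same endpoints differ by a finite sequence of flips'' is not literally applicable; one should instead argue at the level of finite-dimensional marginals, fixing finitely many edges of $\pi$ and completing to an axis-coinciding path so that only finitely many flips are needed. This is routine, but should be stated.
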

See Proposition 2.3 of \cite{CN2018}.

\subsection{Consequences of the down-right property}\label{subsec: DRP consequences}
The free energy has two useful expressions.
\begin{equation}
\log Z_{m,n}=W+N=S+E\label{eqn: NSEW}
\end{equation}
where
\[ 
W_n:= \sum_{j=1}^n \log R^2_{0,j},\quad E_m:=\sum_{j=1}^n \log R^2_{m,j},\quad  N_n:=\sum_{i=1}^m \log R^1_{i,n}, \quad S_m:=\sum_{i=1}^m \log R^1_{i,0}.    
\]
Notice that $W_n=\log Z_{0,n}$ and $S_m=\log Z_{m,0}$.  If the model possesses the down-right property (see Definition \ref{def: down right property}), then  $W_n, E_m, N_n, S_m$ are each sums of i.i.d.\  random variables.   
%Moreover, they satisfy  $W\overset{d}{=}E$, $N\overset{d}{=}S$, $W\perp S$, and $N\perp E$.

Recall that if the random variables $X_1,\dots, X_k$ have finite exponential moments, then their joint cumulant is defined by  
\begin{equation}
\kappa(X_1,\dots,X_k):=\frac{\partial^k}{\partial\xi_1\dots\partial\xi_k}\log \E[e^{\sum_{j=1}^k\xi_j X_j}]\Big|_{\xi_i=0}.\label{eq: joint culumant formula 1}
\end{equation}

Alternatively, the joint cumulant can be written as a combination of products of expectations of the underlying random variables:
\begin{equation}
\kappa(X_1,\dots,X_k)=\sum_{\pi\in\mathcal{P}}(|\pi|-1)!(-1)^{|\pi|-1}\prod_{B\in \pi}\E\left[\prod_{i\in B} X_i \right]\label{eq: joint cumulant formula 2}
\end{equation}
where $\mathcal{P}$ ranges over partitions $\pi$ of $\{1,\ldots, k\}$ and $|A|$ stands for the size of the set $A$.
In the case where $X_1=X_2=\dots =X_k=X$, the joint cumulant reduces to the $k$-th cumulant of $X$ which we denote by $\kappa_k(X)$.   
\begin{lemma}\label{lemma: cumulant of log Z to joint cumulants} 
Assume the polymer environment is such that $|\log R^1|$, $|\log R^2|$, $|\log Y^1|$, and $|\log Y^2|$ all have finite exponential moments.  
Then, for any positive integer $k$, 
\begin{align}
    \kappa_k(\log Z_{m,n})&=\kappa_k(E_n)-(-1)^k\kappa_k(S_m)-\sum_{j=1}^{k-1} \binom{k}{j}(-1)^{k-j}\kappa(\underbrace{\log Z_{m,n},\cdots,\log Z_{m,n}}_{j \text{ times}},\underbrace{S_m,\cdots,S_m}_{k-j \text{ times}})\,\,\text{ and}\label{eq: cumulant lemma 1}\\
    \kappa_k(\log Z_{m,n})&=\kappa_k(N_m)-(-1)^k\kappa_k(W)-\sum_{j=1}^{k-1} \binom{k}{j}(-1)^{k-j}\kappa(\underbrace{\log Z_{m,n},\cdots,\log Z_{m,n}}_{j \text{ times}},\underbrace{W_n,\cdots,W_n}_{k-j \text{ times}}).\label{eq: cumulant lemma 2}
\end{align}
Moreover, if the polymer model also possesses the down-right property, then
\begin{align*}
    \kappa_k(E_n)&=\kappa_k(W_n)=n\kappa_k(R^1)\quad \text{ and }\\
    \kappa_k(N_m)&=\kappa_k(S_m)=m\kappa_k(R^2).
\end{align*}
\end{lemma}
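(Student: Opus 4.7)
The plan is to derive both identities purely from the multilinearity of joint cumulants applied to the decomposition $\log Z_{m,n} = S_m + E_m$ (respectively $\log Z_{m,n} = W_n + N_n$) in \eqref{eqn: NSEW}, and then invoke the down-right property for the "moreover" part. The exponential moment hypothesis ensures the cumulant generating function in \eqref{eq: joint culumant formula 1} is analytic in a neighborhood of the origin, so all the joint cumulants appearing below are well-defined.

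For \eqref{eq: cumulant lemma 1}, I would write $E_m = \log Z_{m,n} - S_m$ and start from
\[
\kappa_k(E_m) = \kappa(\log Z_{m,n} - S_m,\ldots,\log Z_{m,n} - S_m).
\]
Expanding slot by slot using multi-additivity, extracting either $\log Z_{m,n}$ or $-S_m$ from each of the $k$ arguments, yields the binomial-type expansion
\[
\kappa_k(E_m) = \sum_{j=0}^k \binom{k}{j}(-1)^{k-j}\,\kappa(\underbrace{\log Z_{m,n},\ldots,\log Z_{m,n}}_{j},\underbrace{S_m,\ldots,S_m}_{k-j}).
\]
The two extreme terms are $\kappa_k(\log Z_{m,n})$ (for $j=k$) and $(-1)^k\kappa_k(S_m)$ (for $j=0$); solving for $\kappa_k(\log Z_{m,n})$ gives \eqref{eq: cumulant lemma 1}. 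The second identity \eqref{eq: cumulant lemma 2} follows verbatim by applying the same expansion to $\kappa_k(N_n)$ using $N_n = \log Z_{m,n} - W_n$.

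For the "moreover" part, I would apply Definition \ref{def: down right property} directly. The $y$-axis $\{(0,j)\}_{j\ge 0}$ is itself a down-right path, and the associated $\Lambda$-collection consists of exactly the vertical boundary weights $R^2_{0,j}$; by definition these are i.i.d.\ with the common $R^2$ distribution, so $W_n$ is a sum of $n$ i.i.d.\ log-weights and the cumulant identity for $W_n$ follows from additivity of cumulants under independent sums. The same argument applied to the vertical line $\{(m,j)\}_{j\ge 0}$, which is another down-right path, gives the corresponding statement for $E_m$, since its vertical edges contribute precisely the ratio weights summed in $E_m$. The identities for $S_m$ and $N_n$ follow identically from the $x$-axis and the horizontal line at height $n$.

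There is no serious obstacle: the heart of the proof is bookkeeping with multi-additivity of joint cumulants. The only point requiring mild care is that when applying the down-right property on a line not containing the origin (for $E_m$ and $N_n$), the relevant weights are partition-function ratios rather than input boundary weights — this is precisely the role of the definition of $R^k_x$ preceding Definition \ref{def: down right property} and of the decomposition \eqref{eqn: NSEW}.
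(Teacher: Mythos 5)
Your proposal is correct and follows essentially the same route as the paper: express $E = \log Z_{m,n} - S$, expand $\kappa_k(E)$ by multilinearity of the joint cumulant, isolate the $j=k$ and $j=0$ terms, and rearrange; then invoke the down-right property plus additivity of cumulants under independent sums for the explicit formulas. One small point of rigor worth noting: a down-right path in Definition \ref{def: down right property} is bi-infinite, so "the $y$-axis $\{(0,j)\}_{j\ge 0}$" or "the vertical line $\{(m,j)\}_{j\ge0}$" are not literally down-right paths on their own — you should extend them (e.g.\ go down the line $x=m$ and then right along $y=0$) to get a genuine down-right path containing all the relevant vertical edges; the paper itself glosses over this, so your proposal matches it in spirit and in level of detail.
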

% \textcolor{red}{(This remark assumes the notation $\psi_k^f$ has already been introduced, which it has not.)}
% Remark: $\kappa_k(N)=\kappa_k(S)=m\psi_{k-1}^{f^1}(\theta)$, and $\kappa_k(E)=\kappa_k(W)=n\psi_{k-1}^{f^2}(\lambda-\theta)$.  so when $k=2$, we recover 
% \begin{align*}
% \mathbb{V}\textrm{ar}(\log Z_{m,n})&=\underbrace{n\psi_1^{f^2}(\lambda-\theta)-m\psi_1^{f^1}(\theta)}_{\text{characteristic direction}}+2\mathbb{C}\textrm{ov}(\log Z_{m,n},S)\,\, \text{ and }\\
% \mathbb{V}\textrm{ar}(\log Z_{m,n})&=\overbrace{m\psi_1^{f^1}(\theta)-n\psi_1^{f^2}(\lambda-\theta)}+2\mathbb{C}\textrm{ov}(\log Z_{m,n},W).
% \end{align*}

% \textcolor{red}{Should we keep the following?  It shows how we can recover the exact statement made in CN path-fluct paper when $k=2$.)}
% Note that, by the \textcolor{red}{Burke property}, $\mathbb{C}\textrm{ov}(\log Z_{m,n},E)=\Cov(N,S)$ and $\mathbb{C}\textrm{ov}(\log Z_{m,n},W)=\Cov(E,W)$, so we recover the equations in Seppalainen.

\begin{proof}
By Lemma \ref{lemma: fin exp. moments for log Z}, $\log Z_{m,n},N_m,S_m,E_n,W_n$ all have finite exponential moments, so their cumulants and joint cumulants exist.    By \eqref{eqn: NSEW}, $E_n=\log Z_{m,n}-S_n$.
Since the joint cumulant is multi-linear, 
\[
\kappa_k(E_n)= \sum_{j=0}^k\binom{k}{j}(-1)^{k-j}\kappa(\underbrace{\log Z_{m,n},\cdots,\log Z_{m,n}}_{j \text{ times}}, \underbrace{S_m,\cdots,S_m}_{k-j \text{ times}}).
\]
The $j=k$ term in the summand is $\kappa_k(\log Z_{m,n})$ and the $j=0$ term is $(-1)^k\kappa_k(S_m)$.  Rearranging yields equation \eqref{eq: cumulant lemma 1}.  To obtain equation \eqref{eq: cumulant lemma 2}, apply the same argument with $(N_m,W_n)$ in place of $(E_n,S_m)$.  For the last part of the Lemma use the fact that $\kappa_k(X+Y)=\kappa_k(X)+\kappa_k(Y)$ if $X$ and $Y$ are independent.
\end{proof}
\begin{remark}
Each of the four basic beta-gamma models satisfy the moment conditions of Lemma \ref{lemma: cumulant of log Z to joint cumulants}.
\end{remark}

\section{Formulas for the central moments}\label{sec: central-moments}
In the next two sections, we give an exact formula for the terms appearing in the summands on the right-hand side of \eqref{eq: cumulant lemma 1}.  The same arguments can be used  to obtain analogous estimates for \eqref{eq: cumulant lemma 2}, but as will be apparent in Section \ref{sec: estimates for central} the estimates in \eqref{eq: cumulant lemma 1}  will be sufficient to obtain Theorem \ref{theorem: Main 1}.
\subsection{Integration by parts type formula}

Referring to the notation in Section \ref{section Mellin}, fix an integer $r\geq 1$ and let $f_k:(0,\infty)\rightarrow [0,\infty)$ for $k=1,\dots,r$ and $a_0<a<a_1$ be real numbers such that  $[a_0,a_1]\subset\cap_{k=1}^r D(M_{f_k})$. Consider a collection of independent random variables $\{X_k\}_{k=1}^r$ where $X_k\sim m_{f_k}(a)$ for all $ 1\leq k\leq r$, and let $\E^{a}$ corresponds to the expectation over these random variables.
Finally, define
\[T:=\sum_{k=1}^r \log{X_k}.\]

We introduce a sequence $\{p_n(t,a;r)\}_{n\ge 0}$ of $n$-th degree polynomials in $s$ defined recursively by
\begin{align}
p_0(t,a;r)&= 1 \nonumber \\
p_n(t,a;r)&=\frac{\p}{\p a}p_{n-1}(t,a;r)+p_{n-1}(t,a;r)(t-\E^a[T]) \text{ for } n\geq 1.\label{eqn: recursion 2}
\end{align}
Note that the dependence of $p_n(\cdot,\cdot;r)$ on $r$ is polynomial, a fact we use explicitly later in our argument (see Proposition \ref{prop: poly-bound}).

The following lemma extends Lemma B.2 from \cite{CN2018}.
%and gives us a way to analyze the expectations that appear in the \textcolor{red}{right hand side of Lemma} \ref{lemma: joint cumulants to polynomials}.
\begin{lemma}\label{lemma: higher derivatives}
 $A:\R^r\rightarrow\R$ be a measurable function such that $\E^{a}[A(X_1,\cdots,X_r)^2]<\infty$ for all $a\in [a_0,a_1]$.  Then 
\begin{align}\label{eqn: ibp-formula-A}
\frac{\p^n}{\p a^n}\E^{a}[A(X_1,\cdots,X_r)] &=\E^{a}[A(X_1,\cdots,X_r)p_n(T,a;r)].
\end{align}

\end{lemma}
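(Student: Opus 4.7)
The plan is to proceed by induction on $n$. The base case $n=0$ is immediate since $p_0 \equiv 1$. For the induction step, the key observation is the multiplicative product structure of the joint density:
\[
\prod_{k=1}^r \rho_{f_k,a}(x_k) = \left(\prod_{k=1}^r M_{f_k}(a)\right)^{-1} \left(\prod_{k=1}^r x_k\right)^{a-1} \prod_{k=1}^r f_k(x_k).
\]
Differentiating the normalizing constant in $a$ and using Remark \ref{remark - Mellin consequences}(2) yields a factor of $-\sum_{k=1}^r \E^a[\log X_k] = -\E^a[T]$, while differentiating the monomial factor $\prod_k x_k^{a-1}$ yields a factor of $\sum_k \log x_k = T$. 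Combining these gives the $n=1$ identity
\[
\frac{\p}{\p a}\E^a[A(X_1,\ldots,X_r)] = \E^a\!\left[A(X_1,\ldots,X_r)\,(T - \E^a[T])\right] = \E^a[A \cdot p_1(T,a;r)],
\]
which matches the recursion \eqref{eqn: recursion 2}.

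For the induction step, assuming the formula holds at level $n-1$, I would differentiate
\[
\frac{\p^{n-1}}{\p a^{n-1}}\E^a[A(X_1,\ldots,X_r)] = \E^a\!\left[A(X_1,\ldots,X_r)\,p_{n-1}(T,a;r)\right]
\]
once more in $a$. The right-hand side now depends on $a$ in two ways: through the parameter in $p_{n-1}(T,a;r)$ and through the measure $\P^a$. The first contribution yields $\E^a[A \cdot \p_a p_{n-1}(T,a;r)]$, while the second, by the same density computation used in the base case, yields $\E^a[A \cdot p_{n-1}(T,a;r)(T - \E^a[T])]$. Adding these two terms and invoking the recursion \eqref{eqn: recursion 2} gives exactly $\E^a[A \cdot p_n(T,a;r)]$, completing the induction.

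The main technical obstacle is justifying the repeated differentiation under the integral sign. For this I would fix an $a$ in the interior of $[a_0,a_1]$, choose a small $\delta>0$ so that $[a-\delta,a+\delta] \subset \cap_k D(M_{f_k})$, and produce an integrable dominating function for the integrand and its $a$-derivatives on this neighborhood. The density factor $\prod_k x_k^{a-1}f_k(x_k)$ and its $a$-derivatives (which introduce powers of $T = \sum_k \log x_k$) are controlled in $L^1$ uniformly in $a$ in this neighborhood by comparing with the densities at $a \pm \delta$, since for any polynomial $q$ one has $|q(T)|\,\prod_k x_k^{a-1}f_k(x_k) \le C_q\bigl(\prod_k x_k^{a-\delta-1}f_k(x_k) + \prod_k x_k^{a+\delta-1}f_k(x_k)\bigr)$ up to a constant (this is just the elementary inequality $|\log x|^m \le C(\delta,m)(x^{-\delta}+x^\delta)$). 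Combined with the hypothesis $\E^a[A^2]<\infty$ and Cauchy--Schwarz, this produces the required domination and legitimizes exchanging $\p_a$ with the integral at each stage of the induction.
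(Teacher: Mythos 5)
Your proposal is correct and takes essentially the same route as the paper: differentiate the density in $a$, observe that $\p_a \log(\text{density}) = T - \E^a[T]$, and iterate using the recursion \eqref{eqn: recursion 2}, with a domination argument to justify the interchange of $\p_a$ and the integral. The one organizational difference is that the paper first conditions on $T$ (so the conditional density given $T=t$ is $a$-independent, reducing everything to the one-dimensional density $h_a(t)$ of $T$, for which it proves $\p_a^n h_a = h_a\,p_n$ once and for all), whereas you carry out the induction directly on $\E^a[A\,p_{n-1}(T,a;r)]$ against the full joint density; and for the domination the paper uses monotonicity of $a\mapsto h_a(t)$ on the two halves $t\lessgtr\E^{a_0}[T]$ to bound $\sup_a h_a(t)$ by endpoint densities, while you use the elementary bound $|\log x|^m \le C(x^{-\delta}+x^\delta)$ to absorb the polynomial growth into densities at $a\pm\delta$. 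Both versions of the domination argument rely on the same standing hypothesis $\E^a[A^2]<\infty$ on the interval, so nothing is missing; this is the same proof with slightly different bookkeeping.
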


\begin{proof}
 The joint density of $(\log X_1 ,\log X_2 ,\dots,\log X_r)$ is given by 
\[
g(x_1,\dots,x_r)=\frac{e^{a\sum_{k=1}^rx_k}}{\prod_{k=1}^rM_{f_k}(a)}\prod_{k=1}^rf_k(e^{x_k}).
\]
Thus the density of $T=\sum_{k=1}^r\log X_k$ is 
\begin{align}
h_a(t)=\frac{e^{at}}{\prod_{k=1}^rM_{f_k}(a)}\int_{\mathbb{R}^{r-1}}f_1(e^{x_1})f_2(e^{x_2-x_1})\dots f_r(e^{t-x_{r-1}})\mathrm{d}x_1,\dots,x_{r-1}.\label{eq:h}
\end{align}
Therefore, the joint density of $(\log X_1 ,\log X_2 ,\dots,\log X_r )$ given that $T=t$ is given by 
\begin{equation}
\frac{g(x_1,\dots,x_r)\mathbbm{1}_{\{\sum_{k=1}^rx_k=t\}}}{h_a(t)}=\frac{\prod_{k=1}^rf_k(e^{x_k})}{\int_{\mathbb{R}^{r-1}}f_1(e^{x_1})f_2(e^{x_2-x_1})\dots f_r(e^{t-x_{r-1}})\mathrm{d}x_1,\dots,x_{r-1}}\label{eqn: conditional density},
\end{equation}
which has no $a$-dependence.  Recursion \eqref{eqn: recursion 2} and $\frac{\p}{\p a}h_a(t)=h_a(t)(t-\E^a[T])$ inductively imply
\begin{align}
\frac{\p^n}{\p a^n}h_a(t)=h_a(t)p_n(t,a;r) \text{ for all } n\in \mathbb{Z}_+.\label{h recursion}
\end{align}
  By \eqref{eqn: conditional density} and \eqref{h recursion},
\begin{align*}
\frac{\p^n}{\p a^n}\mathbb{E}^a[A(X_1,\cdots,X_k)] &=\frac{\p^n}{\p a^n}\int_{\mathbb{R}}\mathbb{E}^a[A(X_1,\cdots,X_k)|T=t]h_a(t)\mathrm{d}t\\
\ &=\int_{\mathbb{R}}\mathbb{E}^a[A(X_1,\cdots,X_k)|T=t]\frac{\p^n}{\p a^n}h_a(t)\mathrm{d}t\\
\ &=\int_{\mathbb{R}}\mathbb{E}^a[A(X_1,\cdots,X_k)|T=t]h_a(t)p_n(t,a;r)\mathrm{d}t\\
\ &=\E^a\Bigl(A(X_1,\cdots,X_k)p_n(T,a;r)\Bigr).
\end{align*}
The interchanging of the  $n$-th derivative and the integral will be justified by the bound:
\begin{align}
\int_{\mathbb{R}}\mathbb{E}[\left|A(\{X_k\}_{k=1}^r)\right|\big|T=t]\sup_{a\in[a_0,a_1]}\left|\frac{\p^n}{\p a^n}h_a(t)\right|\mathrm{d}t<\infty. \label{eq:sup deriv}
\end{align}
To obtain this bound first notice that recursion \eqref{eqn: recursion 2}
%\begin{align}
%p_0(s,a)&= 1\\
%p_n(s,a)&=\frac{\p}{\p a}p_{n-1}(s,a)+p_{n-1}(s,a)(s-\E^a[S]) \text{ for } n\geq 1.
%\end{align}
implies that $p_n(t,a)$ are degree $n$ polynomials in $t$ with coefficients that are smooth in a. Thus, by \eqref{h recursion}, there exist constants $0<C_n<\infty$ independent of $t$ such that 
\begin{align}
\sup_{a\in[a_0,a_1]}\left|\frac{\p^n}{\p a^n }h_a(t)\right|\leq C_n(1+|t|)^{n}\sup_{a\in[a_0,a_1]}h_a(t). \label{sup bound 1}
\end{align}
Once we show that there is a constant $C$ depending only on $a_0$ and  $a_1$ such that 
\begin{align}
\sup_{a\in[a_0,a_1]}h_a(t)\leq h_{a_0}(t)+Ch_{a_1}(t)\quad \text{ for all } s\in \R, \label{eq:sup bound}
\end{align}
\eqref{sup bound 1} will give the bound \eqref{eq:sup deriv} since
\begin{align*}
\int_{\mathbb{R}}\mathbb{E}[|A(\{X_k\}_{k=1}^r)|\mid T=t](1+|t|)^{n} h_{a_j}(t)\mathrm{d}t=\mathbb{E}^{a_j}[|A(\{X_k\}_{k=1}^r)|(1+|T|)^{n}]\\
\leq \mathbb{E}^{a_j}[(A(\{X_k\}_{k=1}^r))^2]^\frac{1}{2}\mathbb{E}^{a_j}[(1+|T|)^{2n}]^\frac{1}{2}.
\end{align*}
This is finite since $\E^{a_j}[A(\{X_k\}_{k=1}^r)^2]<\infty$ by assumption, and $T$ is a sum of independent random variables with finite exponential moments.
All that is left to do is verify the bound \eqref{eq:sup bound}.  To accomplish this, notice that equation \eqref{eq:h} implies 
\begin{equation}
\label{eqn: log h}
\frac{\p}{\p a}\log{h_a(t)}=t-\E^a[T].
\end{equation}
Since  $\E^a[T]=\sum_{k=1}^r\psi_0^{f_k}(a)$, $a\mapsto \E^a[T]$ is an increasing function (recall that $\frac{d}{da}\psi_0^{f_k}(a)=\psi_1^{f_k}(a)=\Var[X_k]>0$).  Therefore, for all $t\leq\E^{a_0}[T]$, the function $a\mapsto h_a(t)$ is non-increasing on $[a_0,a_1]$ which gives 
\[
\sup_{a\in[a_0,a_1]}h_a(t)\leq h_{a_0}(t)\quad  \text{ for all }  t\leq \E^{a_0}[T].
\]
On the other hand, if $t> \E^{a_0}[T]$, then 
\begin{equation}\label{eqn: suph}
\frac{\p}{\p a}\log \Bigl(h_a(t)\exp{\bigl(a(\E^{a_1}[T]-\E^{a_0}[T])}\bigr)\Bigr)=t-\E^a[T]+\E^{a_1}[T]-\E^{a_0}[T]> 0
\end{equation}
for all $a\in[a_0,a_1]$.  Thus, for all $t> \E^{a_0}[T]$, 
\[a\mapsto h_a(t)\exp{\Bigl(a(\E^{a_1}[T]-\E^{a_0}[T])\Bigr)}\] is increasing on the interval $[a_0,a_1]$.  Therefore
\[
\sup_{a\in[a_0,a_1]}h_a(t)\leq C_3h_{a_1}(t) \text{ for all } t> \E^{a_0}[T]
\]
where $C=\exp{\Bigl((a_1-a_0)(\E^{a_1}[T]-\E^{a_0}[T]})\Bigr)$.  Combining \eqref{eqn: log h} and \eqref{eqn: suph}  gives the desired result.
\end{proof}

\begin{lemma}\label{lemma: joint cumulants to polynomials} Assume the polymer environment satisfies $R^1_{i,0}\sim m_f(a)$ for all $i\geq 1$. Let $k\geq 2$ and $1\leq j\leq k$.  For $r\geq 1$, let
\[
S_r:=\sum_{i=1}^r R^1_{i,0}.
\]

Then, 

\begin{equation}
\label{eqn: renormalized-cumulant}
\begin{split}
\kappa(&\underbrace{\log Z_{m,n},\dots,\log Z_{m,n}}_{j\text{ times}},\underbrace{S_r,\dots,S_r}_{k-j \text{ times}})=\kappa(\underbrace{\ov{\log Z_{m,n}},\dots,\ov{\log Z_{m,n}}}_{j\text{ times}},\underbrace{S_r,\dots,S_r}_{k-j \text{ times}})\\
 &=
\sum_{\pi\in \mathcal{P}} (|\pi|-1)!(-1)^{|\pi| -1} \prod_{B\in \pi} \mathbb{E}\left[(\ov{\log Z_{m,n}})^{|B \cap\{1,\ldots, j\}|}p_{|B \cap \{j+1,\ldots,k\}|}(S_r,a;r)\right] +r\psi^f_{k-j}(a)
\end{split}
\end{equation}
where $\mathcal{P}$ ranges over partitions $\pi$ of $\{1,\ldots, k\}$ such that no block $B\in \pi$ is contained in $\{j+1,\ldots, k\}$, and $p_k(s,a)$ are polynomials recursively defined by \eqref{eqn: recursion 2} in the case $f_j=f$ for all $j$.
\end{lemma}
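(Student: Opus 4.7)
The first equality is immediate from the translation invariance of joint cumulants for $k\ge 2$: shifting each $\log Z_{m,n}$ entry by the deterministic constant $\E[\log Z_{m,n}]$ does not alter the joint cumulant.

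For the second equality, my plan is to work through the bivariate cumulant generating function $G(\xi,\eta;a):=\log\E^a[e^{\xi\log Z_{m,n}+\eta S_r}]$. Applying Lemma~\ref{lemma: higher derivatives} with $A=e^{\xi\log Z_{m,n}+\eta S_r}$ yields the key PDE
\[
\partial_a G(\xi,\eta;a)-\partial_\eta G(\xi,\eta;a)=-\E^a[S_r],
\]
since both partial derivatives at $(\xi,\eta;a)$ equal tilted expectations with respect to $e^{\xi\log Z_{m,n}+\eta S_r}/\E^a[\cdot]$, the $\partial_a$ one involving the polynomial $p_1(S_r,a;r)=S_r-\E^a[S_r]$. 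This first-order linear PDE integrates along characteristics to
\[
G(\xi,\eta;a)=\log\E^{a+\eta}\!\bigl[e^{\xi\log Z_{m,n}}\bigr]+r\bigl[\log M_f(a+\eta)-\log M_f(a)\bigr],
\]
where $a+\eta$ refers to the Mellin parameter of the first $r$ horizontal boundary weights. Reading off $\partial_\xi^j\partial_\eta^{k-j}$ at the origin: the second summand is $\xi$-independent, so for $j\ge 1$ its only contribution is the residual $r\psi^f_{k-j}(a)$ appearing in the statement, while the first summand produces $\partial_a^{k-j}\kappa_j(\log Z_{m,n})$.

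It remains to identify $\partial_a^{k-j}\kappa_j(\log Z_{m,n})$ with the sum over $\mathcal{P}$. Starting from the moment--cumulant expansion $\kappa_j(\log Z_{m,n})=\sum_\pi(|\pi|-1)!(-1)^{|\pi|-1}\prod_B\E[(\overline{\log Z_{m,n}})^{|B|}]$ (formula \eqref{eq: joint cumulant formula 2}, or equivalently Fa\`a di Bruno applied to $\log\E^a[e^{\xi\log Z_{m,n}}]$), one applies Leibniz's rule to distribute the $k-j$ derivatives in $a$ among the blocks and converts each $\partial_a^{m_B}\E[\cdot]$ into an insertion of $p_{m_B}(S_r,a;r)$ via Lemma~\ref{lemma: higher derivatives}. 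The decisive combinatorial fact is that the resulting data (a partition $\pi$ of $\{1,\dots,j\}$ together with an assignment of $\{j+1,\dots,k\}$ to its blocks) is in bijection with partitions $\pi'\in\mathcal{P}$, with $|\pi'|=|\pi|$, so the weights $(|\pi|-1)!(-1)^{|\pi|-1}$ transfer intact. Blocks outside $\mathcal{P}$ are absent here because any pure $S_r$-block would carry a factor $\E[p_m(S_r,a;r)]=\partial_a^m\E^a[1]=0$ for $m\ge 1$.

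The principal obstacle will be the combinatorial bookkeeping, in particular reconciling the $a$-dependence of $\overline{\log Z_{m,n}}$ (through $\E^a[\log Z_{m,n}]$) with the Leibniz expansion: one must track how shift-terms of the form $c(a)=\E^a[\log Z_{m,n}]$ cancel across partitions so that the expectations in the final formula are genuinely of $(\overline{\log Z_{m,n}})^{|B|_1}p_{|B|_2}(S_r,a;r)$ rather than of uncentered moments. The generating-function route above bypasses much of this bookkeeping and makes both the combinatorics and the origin of the residual $r\psi^f_{k-j}(a)$ transparent, identifying it with the pure-$S_r$ cumulant $\kappa_{k-j}(S_r)$.
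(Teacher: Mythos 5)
Your PDE-and-characteristics computation recovers exactly the tilting identity
\[
\E^a\!\bigl[e^{\xi\log Z_{m,n}+\eta S_r}\bigr]=\Bigl(\tfrac{M_f(a+\eta)}{M_f(a)}\Bigr)^{r}\E^{a+\eta}\!\bigl[e^{\xi\log Z_{m,n}}\bigr],
\]
which is the logarithm of the generating-function identity \eqref{eqn: generating function} that the paper obtains directly from $p_k=(\partial_a^k g)/g$; up to that point the two arguments are the same substance in different clothing. The divergence comes at the last step and is where your proof has a genuine gap. The paper expands $e^{\eta S_r}$ via \eqref{eqn: generating function} and then takes the full $k$-fold derivative $\partial_{\xi_1}\cdots\partial_{\xi_k}$ of $\log\E[\cdots]$ \emph{with $a$ held fixed throughout}; the partition sum drops out immediately, and because $a$ is never differentiated, the $a$-dependence of the centering constant $\E^a[\log Z_{m,n}]$ is irrelevant. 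You instead read the cumulant off as $\partial_a^{k-j}\kappa_j^a(\log Z_{m,n})$ and then propose to apply Leibniz in $a$ to the moment--cumulant expansion of $\kappa_j^a$. That introduces exactly the obstacle you flag at the end: each factor $\E^a[(\ov{\log Z_{m,n}})^{|B|}]$ depends on $a$ both through the law of the boundary weights (which Lemma~\ref{lemma: higher derivatives} converts into $p$-insertions) and through the centering $\E^a[\log Z_{m,n}]$, and you do not show that the latter contributions cancel. Asserting that ``the generating-function route bypasses much of this bookkeeping'' is not a proof --- the paper's execution bypasses it precisely because it never differentiates in $a$, whereas your reformulation reintroduces those derivatives. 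As written, this step is missing.

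A secondary issue is your attribution of the residual $r\psi^f_{k-j}(a)$. The summand $r[\log M_f(a+\eta)-\log M_f(a)]$ is independent of $\xi$, so for $j\geq 1$ the mixed derivative $\partial_\xi^{\,j}\partial_\eta^{\,k-j}$ annihilates it; it cannot produce $r\psi^f_{k-j}(a)$ (and even the pure $\partial_\eta^{k-j}$ at $\eta=0$ would give $r\psi^f_{k-j-1}(a)$, a different index, by the definition $\psi^f_m=\partial_a^{m+1}\log M_f$). In fact the additive term $r\psi^f_{k-j}(a)$ in \eqref{eqn: renormalized-cumulant} appears to be spurious: for $k=2$, $j=1$ the partition sum alone already equals $\E[\ov{\log Z_{m,n}}\,p_1(S_r,a;r)]=\Cov(\log Z_{m,n},S_r)=\kappa(\log Z_{m,n},S_r)$, and adding $r\psi^f_1(a)=r\,\Var[\log R^1]$ over-counts; consistently, Corollary~\ref{Cor: exact formula for cumulant of free energy} does not carry any such extra $r\psi$ terms. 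Your proof inherits this feature from the statement (as does the paper's proof), but the explanation you offer for where it comes from does not hold up.
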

\begin{proof}
%\textcolor{red}{by the recurrence
%\begin{equation}p_k(s,a)=\frac{\partial}{\partial a}p_{k-1}(s,a)+p_{k-1}(s,a)(s-m\psi_0^f(a)),\label{eqn: poly recursion}\end{equation}}
%where $p_0(s,a)=1$.
Introduce the function 
\[g(s,a;r):=e^{as-r\log M_f(a)}.\]
Note that
\[\frac{\partial}{\partial a} e^{as-r\log M_f(a)}=(s-r \psi_0^f(a))g(s,a;r),\]
so
\begin{align*}
\frac{\partial}{\partial a}\big( g(s,a;r) p_{k-1}(s,a;r)\big)&= g(s,a;r)\frac{\partial}{\partial a}p_{k-1}(s,a;r)+g(s,a;r)p_{k-1}(s,a;r)(s-r\psi_0^f(a))\\
&=g(s,a;r)p_k(s,a;r).
\end{align*}
Rearranging, this gives
\begin{align*}
    p_k(s,a;r)&=\frac{1}{g(s,a;r)}\frac{\partial}{\partial a}\big(g(s,a;r)p_{k-1}(s,a;r)\big)\\
    &=...\\
    &= \frac{1}{g(s,a;r)}\frac{\partial^l}{\partial a^l}(g(s,a;r) p_{k-l}(s,a;r)).
\end{align*}
Letting $l=k$, we have
\begin{equation}
    p_k(s,a;r)= \frac{\frac{\partial^k}{\partial a^k }g(s,a;r)}{g(s,a;r)}.\nonumber
\end{equation}
By Taylor equation expansion, we have
\[g(s,a+\lambda;r)=\sum_{k=0}^\infty \frac{\lambda^k}{k!}\frac{\partial^k}{\partial a^k}g(s,a;r),\]
which yields the generating function for the polynomials $p_k(s,a;r)$:
\begin{equation}
\frac{g(s,a+\lambda;r)}{g(s,a;r)}=e^{\lambda s} \left(\frac{M_f(a)}{M_f(a+\lambda)}\right)^r =\sum_{k=0}^\infty \frac{\lambda^k}{k!}p_k(s,a;r).\label{eqn: generating function}
\end{equation}

Using this, we have a formula for joint cumulants, as follows. Recall
\[\kappa_k(\underbrace{A,\ldots, A}_{j \text{ times}},\underbrace{S_r,\ldots, S_r}_{k-j \text{ times}})=\frac{\p}{\p\xi_1\cdots \p\xi_k}\log \mathbb{E}[e^{(\xi_1+\cdots+\xi_j)A }e^{(\xi_{j+1}+\cdots+ \xi_k)S_r}]\Big|_{\xi_i=0}.\]
Inserting the generating function \eqref{eqn: generating function}, we have
\[e^{(\xi_{j+1}+\cdots+\xi_k) S_r}= \big(\frac{M_f(a+\xi_{j+1}+\cdots \xi_k)}{M_f(a)}\big)^r\sum_{l=0}^\infty \frac{(\xi_{j+1}+\cdots+\xi_k)^l p_l(S_r,a;r)}{l!}\]
Taking expectations, then logarithms, we have 
\[\log \mathbb{E}[e^{(\xi_1+\cdots+ \xi_j)A} \sum_{l=0}^\infty \frac{\left(\xi_{j+1}+\cdots+\xi_k\right)^l}{l!}p_l(S_r,a;r)]-r\log M_f(a)+r\log M_f(a+\xi_{j+1}+\cdots+\xi_k)\].

Setting $A=\ov{\log Z_{m,n}}$ and taking derivates with respect to the $\xi_i$'s, then evaluating them at zero gives \eqref{eqn: renormalized-cumulant}.

The only part of the statement that still requires comment is the assertion that partitions $\mathcal{P}$ with a block contained in $\{j+1,\ldots,k\}$ make a zero contribution. This is because
\[\mathbb{E}[p_n(S_r,a;r)]=0\]
for $n\ge 1$, as follows from \eqref{eqn: ibp-formula-A} with $A\equiv 1$.
\end{proof}

\subsection{Coupling of polymer environments}\label{sec: coupling}
In order to compare polymer environments with different parameters, we use a coupling to express the boundary weights as functions of i.i.d.\ uniform$(0,1)$ random variables.

Recall the notation from Section \ref{section Mellin}.  Suppose $f:(0,\infty)\rightarrow [0,\infty)$ is a smooth function on its support, $\supp(f)$ is open, and $D(M_f)$ is non-empty.  Define $F^f: D(M_f)\times (0,\infty)\rightarrow [0,1]$ by
\[
F^f(a,x):=\frac{1}{M_f(a)}\int_0^x y^{a-1}f(y)\mathrm{d}y.
\]
For fixed $a\in D(M_f)$, $x\mapsto F^f(a,x)$ is the
cdf of a random variable with $X\sim m_f(a)$. Note that $F^f(a,\cdot)$ is a bijection between $\supp(f)$ and $(0,1)$. For $a\in D(M_f)$, let $H^f(a,\cdot)$ be the inverse of $F^f(a,\cdot)$ defined on $(0,1)$.  By the inverse function theorem, $H^f:D(M_f)\times (0,1)\rightarrow \supp(f)$ is a smooth function in both of its variables satisfying
\begin{equation}
\frac{\p}{\p a}\log H^f(a,x)=L^f(a,H^f(a,x)),\label{eqn: H-deriv}
\end{equation}
where 
\[
L^f(a,x):=\frac{1}{x^af(x)}\int_0^xy^{a-1}(\psi^f_0(a)-\log y) f(y)\mathrm{d}y.
\]

Another expression for $L^f$ shows that it is a strictly positive function:
\begin{equation}
L^f(a,x)=\frac{-1}{x^a f(x)}\Cov(\log X, \ind_{\{X\leq x\}})>0.\label{eqn: Lf is positive}
\end{equation}
Since $f$ is smooth, $L^f$ is smooth as a function on $D(M_f)\times\supp(f)$.
Note that if $\eta$ is a uniform$(0,1)$ distributed random variable, then $H^f(a,\eta)\sim m_f(a)$ \underline{for every} $a\in D(M_f)$.  This gives us a useful coupling as follows.

 Fix $m,n\in \N$ and an environment $\omega$ on the square with lower-left corner $(0,0)$ and upper-right corner $(m,n)$.  Assume the random variables attached to the southern boundary $R^1_{i,0}$ all have $m_f(a)$ distributions. Fix $1\leq r\leq m$ and  let $\{\eta_i\}_{i=1}^r$ be i.i.d.\ uniform$(0,1)$ distributed random variables which are also independent of the original environment $\omega$.   Now create a new environment $\tilde{\omega}$ by replacing $R^1_{i,0}$ in the original environment along the southern boundary by $\tilde{R}^1_{i,0}:=H^f(a,\eta_i)$ only for $i=1,\dots, r$.  This new environment is equal in distribution to the old one, $\tilde{\omega}\overset{d}{=}\omega$.  Write 
\[
Z_{m,n}(a):= Z_{m,n}^{\tilde{\omega}}=\sum_{x_{\cdot}\in \Pi_{m,n}}\prod_{i=1}^{m+n}\tilde{\omega}_{(x_{i-1},x_i)} =\sum_{x_{\cdot}\in \Pi_{m,n}}\prod_{i=1}^{t_1(x_{\cdot})\wedge r}\tilde{R}^1_{i,0}\prod_{i=t_1(x_{\cdot})\wedge r+1}^{m+n}\omega_{(x_{i-1},x_i)}, 
\]
where $t_1(x_{\cdot}):=\max\{i\geq 0: x_i=(i,0)\}$, i.e.\ the exit time from the southern boundary.
By equation \eqref{eqn: H-deriv}, 
\begin{equation}
\frac{\p}{\p a} \tilde{R}^1_{i,0}=\tilde{R}^1_{i,0}L^f(a,\tilde{R}^1_{i,0}).\label{eqn: R deriv}
\end{equation}
Therefore
\begin{equation}
\frac{\p}{\p a}\log Z_{m,n}(a)=\tilde{E}^a_{m,n}[\sum_{i=1}^{t_1(x_{\cdot})\wedge r}L^f(a,\tilde{R}^1_{i,0})]\label{eqn: Z deriv},
\end{equation}
where $\tilde{E}^a_{m,n}[f(x_{\cdot})]$ is defined to be the expectation of the function $f$ of up-right paths $x_{\cdot}$ under the quenched probability measure 
\[
\tilde{Q}_{m,n}^a(x_{\cdot}):=\frac{1}{Z_{m,n}(a)}\prod_{i=1}^{m+n}\tilde{\omega}_{(x_{i-1},x_i)}=\frac{1}{Z_{m,n}(a)}\prod_{i=1}^{t_1(x_{\cdot})\wedge r}\tilde{R}^1_{i,0}\prod_{i=t_1(x_{\cdot})\wedge r+1}^{m+n}\omega_{(x_{i-1},x_i)}.
\]

By \eqref{eqn: R deriv} and \eqref{eqn: Z deriv},   
\begin{equation}
\frac{\p}{\p a} \log \tilde{Q}_{m,n}^a(x_{\cdot})=\sum_{i=1}^{t_1(x_{\cdot})\wedge r}L^f(a,\tilde{R}^1_{i,0})-\tilde{E}^a_{m,n}[\sum_{i=1}^{t_1(x_{\cdot})\wedge r}L^f(a,\tilde{R}^1_{i,0})].\label{eqn: Q deriv} 
\end{equation}

\subsection{Derivatives with respect to boundary parameters}
We now use equation \eqref{eqn: Q deriv} to provide a recursion which will help determine $\frac{\p^k}{\p a^k}\log  Z_{m,n}(a)$.

\begin{lemma}
Suppose $G:D(M_f)\times \supp(f)\rightarrow \R$ is a $C^k$ function.  Then the function $\t{\p}G:D(M_f)\times \supp(f)\rightarrow \R$ defined by
\begin{equation}\label{eqn: tilde-der}
\tilde{\p}G(a,x):=\frac{\p G}{\p a}(a,x)+x L^f(a,x) \frac{\p G}{\p x}(a,x) 
\end{equation}
is $C^{k-1}$ and satisfies the two equations:
\begin{align}
\tilde{\p}G(a,\tilde{R}^1_{i,0})&=\frac{\p}{\p a} G(a,\tilde{R}^1_{i,0})\nonumber\\
\frac{\p}{\p a} \tilde{E}^a_{m,n}\left[\sum_{i=1}^{t_1(x_{\cdot})\wedge r}G(a,\tilde{R}^1_{i,0})\right]&=  \tilde{E}^a_{m,n}\left[\sum_{i=1}^{t_1(x_{\cdot})\wedge r}\tilde{\p}G(a,\tilde{R}^1_{i,0})\right]\label{eqn: exact deriv 1}\\
&+\widetilde{Cov}^a_{m,n}\left(\sum_{i=1}^{t_1(x_{\cdot})\wedge r}G(a,\tilde{R}^1_{i,0}),\sum_{i=1}^{t_1(x_{\cdot})\wedge r}L^f(a,\tilde{R}^1_{i,0})\right),\nonumber
\end{align}
where $\widetilde{Cov}^a_{m,n}$ stands for the covariance under $\widetilde{E}^a_{m,n}$.
\end{lemma}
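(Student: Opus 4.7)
The plan is to verify each of the three assertions in sequence, with the key ingredients being the derivative formula \eqref{eqn: R deriv} for the boundary weights and the quenched log-measure formula \eqref{eqn: Q deriv}; nothing deeper is required.

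For the regularity statement, I would observe that $L^f$ is smooth on $D(M_f)\times \supp(f)$ (noted below \eqref{eqn: Lf is positive}, using smoothness of $f$), while the partial derivatives $\p G/\p a$ and $\p G/\p x$ are $C^{k-1}$ by assumption. Since $\tilde{\p}G$ is a polynomial combination of these, it is itself $C^{k-1}$.

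For the first identity, the plan is a direct chain-rule computation. Treating $\tilde{R}^1_{i,0}=H^f(a,\eta_i)$ as a function of $a$ (with $\eta_i$ fixed), differentiating $G(a,\tilde{R}^1_{i,0})$ in $a$ and invoking \eqref{eqn: R deriv} gives
\[\frac{\p}{\p a}G(a,\tilde{R}^1_{i,0})=\frac{\p G}{\p a}(a,\tilde{R}^1_{i,0})+\frac{\p G}{\p x}(a,\tilde{R}^1_{i,0})\cdot \tilde{R}^1_{i,0}L^f(a,\tilde{R}^1_{i,0}),\]
which is exactly the definition \eqref{eqn: tilde-der}.

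For the second identity, I would expand the quenched expectation as a finite sum over $\Pi_{m,n}$, so differentiation in $a$ poses no analytic difficulty. Write $F(a,x_\abullet):=\sum_{i=1}^{t_1(x_\abullet)\wedge r}G(a,\tilde{R}^1_{i,0})$ and $L(a,x_\abullet):=\sum_{i=1}^{t_1(x_\abullet)\wedge r}L^f(a,\tilde{R}^1_{i,0})$. Then
\[\frac{\p}{\p a}\tilde E^a_{m,n}[F]=\sum_{x_\abullet}\left(\frac{\p F}{\p a}\tilde Q^a_{m,n}(x_\abullet)+F\cdot \tilde Q^a_{m,n}(x_\abullet)\frac{\p}{\p a}\log\tilde Q^a_{m,n}(x_\abullet)\right).\]
The upper limit $t_1(x_\abullet)\wedge r$ is independent of $a$, so the first identity applied termwise gives $\p F/\p a=\sum_{i=1}^{t_1\wedge r}\tilde{\p}G(a,\tilde{R}^1_{i,0})$, producing the first term on the right-hand side of \eqref{eqn: exact deriv 1}. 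The second term comes from \eqref{eqn: Q deriv}, which says $\frac{\p}{\p a}\log\tilde Q^a_{m,n}(x_\abullet)=L(a,x_\abullet)-\tilde E^a_{m,n}[L]$, so its contribution is $\tilde E^a_{m,n}[F\cdot(L-\tilde E^a_{m,n}[L])]=\widetilde{\mathrm{Cov}}^a_{m,n}(F,L)$, exactly the covariance term.

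There is no real obstacle here; the only mildly subtle point is remembering that $F$ itself depends on $a$ through both the explicit variable and through $\tilde{R}^1_{i,0}=H^f(a,\eta_i)$, so the derivative under $\tilde E^a_{m,n}$ splits into the ``termwise'' piece (handled by the first identity) plus the ``measure'' piece (handled by \eqref{eqn: Q deriv}). The operator $\tilde{\p}$ is precisely designed to package both sources of $a$-dependence in a single function on $D(M_f)\times \supp(f)$, which is what makes an inductive application of this lemma feasible in the subsequent arguments.
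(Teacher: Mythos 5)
Your proposal is correct and follows essentially the same route as the paper's proof: the chain rule via \eqref{eqn: R deriv} for the first identity, then expanding $\tilde E^a_{m,n}$ as a finite sum over paths, applying the product rule, and invoking \eqref{eqn: Q deriv} to recognize the second contribution as the quenched covariance. You also supply the (straightforward) regularity argument, which the paper leaves implicit.
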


\begin{proof}
By equation \eqref{eqn: R deriv},
\[
\frac{\p}{\p a}G(a,\tilde{R}^1_{i,0})=\frac{\p G}{\p a}(a,\tilde{R}^1_{i,0})+\frac{\p}{\p a}\tilde{R}^1_{i,0}\frac{\p G}{\p x}(a,\tilde{R}^1_{i,0})=G_1 (a,\tilde{R}^1_{i,0}).
\]
\begin{align*}
\tilde{E}^a_{m,n}\left[\sum_{i=1}^{t_1(x_{\cdot})\wedge r}G(a,\tilde{R}^1_{i,0})\right]=\sum_{x_{\cdot}\in \Pi_{m,n}}\left(\sum_{i=1}^{t_1(x_{\cdot})\wedge r}G(a,\tilde{R}^1_{i,0})\right)\tilde{Q}_{m,n}^a(x_{\cdot}).
\end{align*}
Therefore,
\begin{align}
\frac{\p}{\p a}\tilde{E}^a_{m,n}\left[\sum_{i=1}^{t_1(x_{\cdot})\wedge r}G(a,\tilde{R}^1_{i,0})\right]&=\sum_{x_{\cdot}\in \Pi_{m,n}}\frac{\p}{\p a}\left (\left(\sum_{i=1}^{t_1(x_{\cdot})\wedge r}G(a,\tilde{R}^1_{i,0})\right)\tilde{Q}_{m,n}^a(x_{\cdot})\right)
\nonumber\\
&=\sum_{x_{\cdot}\in \Pi_{m,n}} \left(\sum_{i=1}^{t_1(x_{\cdot})\wedge r}\frac{\p}{\p a}G(a,\tilde{R}^1_{i,0})\right)\tilde{Q}_{m,n}^a(x_{\cdot})\label{eqn part: recurrence rel 1}\\
&+\sum_{x_{\cdot}\in \Pi_{m,n}} \left(\sum_{i=1}^{t_1(x_{\cdot})\wedge r}G(a,\tilde{R}^1_{i,0})\right)\frac{\p}{\p a}\tilde{Q}_{m,n}^a(x_{\cdot})\label{eqn part: recurrence rel 2}.
\end{align}

By the first part of the lemma, the right-hand side of \eqref{eqn part: recurrence rel 1} equals $\tilde{E}^a_{m,n}\left[\sum_{i=1}^{t_1(x_{\cdot})\wedge r}G_1(a,\tilde{R}^1_{i,0})\right]$.
Using equation \eqref{eqn: Q deriv}, \eqref{eqn part: recurrence rel 2} equals
\begin{align*}
&\sum_{x_{\cdot}\in \Pi_{m,n}} \left(\sum_{i=1}^{t_1(x_{\cdot})\wedge r}G(a,\tilde{R}^1_{i,0})\right)\left(\sum_{i=1}^{t_1(x_{\cdot})\wedge r}L^f(a,\tilde{R}^1_{i,0})-\tilde{E}^a_{m,n}\left[\sum_{i=1}^{t_1(x_{\cdot})\wedge r}L^f(a,\tilde{R}^1_{i,0})\right]\right)\tilde{Q}_{m,n}^a(x_{\cdot})\\
&=\tilde{E}^a_{m,n}\left[\left(\sum_{i=1}^{t_1(x_{\cdot})\wedge r}G(a,\tilde{R}^1_{i,0})\right)\left(\sum_{i=1}^{t_1(x_{\cdot})\wedge r}L^f(a,\tilde{R}^1_{i,0})\right)\right]\\
&-\tilde{E}^a_{m,n}\left[\sum_{i=1}^{t_1(x_{\cdot})\wedge r}G(a,\tilde{R}^1_{i,0})\right]\tilde{E}^a_{m,n}\left[\sum_{i=1}^{t_1(x_{\cdot})\wedge r}L^f(a,\tilde{R}^1_{i,0})\right]\\
&=\widetilde{Cov}^a_{m,n}\left(\sum_{i=1}^{t_1(x_{\cdot})\wedge r}G(a,\tilde{R}^1_{i,0}),\sum_{i=1}^{t_1(x_{\cdot})\wedge r}L^f(a,\tilde{R}^1_{i,0})\right).
\end{align*}

% \textcolor{blue}{Next, use equation \eqref{eqn: Z deriv} and apply Lemma 22.1 $k-1$-times to the smooth function $L^f(a,x)$ to get and exact formula expressing $\frac{\p^k}{\p a^k}\log Z_{m,n}(a)$ as an element of the "algebra" generated by at most $k-1$ products of the form
% \[
% \sum_{i=1}^{t_1(x_{\cdot})\wedge r} L^f_j(a,\tilde{R}^1_{i,0}) \quad \text{ for } 1\leq j\leq k-1.
% \]
% }
\end{proof}

Letting $\tilde{\kappa}^a_k(X_1,\dots,X_k)$ denote the quenched joint cumulant of $k$ random variables with respect to $\tilde{E}^a_{m,n}$, repeated application of \eqref{eqn: exact deriv 1} and the chain rule give:
\begin{lemma}\label{lemma: deriv of kappa}
For $g_1,\dots, g_k\in C^\infty(D(M_f)\times \supp(f))$,
\begin{align*}
\frac{\p}{\p a} \tilde{\kappa}_k^a&\left(\sum_{i=1}^{t_1\wedge r}g_1(a,\tilde{R}_{i,0}^1),\sum_{i=1}^{t_1\wedge r}g_2(a,\tilde{R}_{i,0}^1),\cdots,\sum_{i=1}^{t_1\wedge r}g_k(a,\tilde{R}_{i,0}^1) \right)\\
=
\sum_{\substack{\delta_1+\dots+ \delta_k=1\\\delta_i\geq 0}}&\tilde{\kappa}_k^a\left(\sum_{i=1}^{t_1\wedge r}\tilde{\p}^{\delta_1}g_1(a,\tilde{R}_{i,0}^1),\sum_{i=1}^{t_1\wedge r}\tilde{\p}^{\delta_2}g_2(a,\tilde{R}_{i,0}^1),\cdots,\sum_{i=1}^{t_1\wedge r}\tilde{\p}^{\delta_k}g_k(a,\tilde{R}_{i,0}^1) \right)\\
+&\tilde{\kappa}_{k+1}^a\left(\sum_{i=1}^{t_1\wedge r}g_1(a,\tilde{R}_{i,0}^1),\sum_{i=1}^{t_1\wedge r}g_2(a,\tilde{R}_{i,0}^1),\cdots,\sum_{i=1}^{t_1\wedge r}g_k(a,\tilde{R}_{i,0}^1),\sum_{i=1}^{t_1\wedge r}L^f(a,\tilde{R}_{i,0}^1) \right).
\end{align*}
\end{lemma}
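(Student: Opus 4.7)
The plan is to reduce everything to differentiating the logarithm of a quenched moment generating function. Define
\[
\Psi(\vec\xi,a) := \log\tilde{E}^a_{m,n}\Bigl[\exp\Bigl(\sum_{j=1}^k \xi_j A_j(a)\Bigr)\Bigr],\qquad A_j(a) := \sum_{i=1}^{t_1\wedge r} g_j(a,\tilde{R}^1_{i,0}).
\]
By the generating-function definition \eqref{eq: joint culumant formula 1} of joint cumulants, $\tilde\kappa_k^a(A_1,\ldots,A_k)=\partial_{\xi_1}\cdots\partial_{\xi_k}\Psi(\vec\xi,a)|_{\vec\xi=0}$. Since $\tilde{E}^a_{m,n}$ is a finite sum over up-right paths and $g_j$, $L^f$, $H^f$ are smooth in both arguments, $a$- and $\vec\xi$-derivatives commute freely, so the task reduces to computing $\partial_a\Psi$ for general $\vec\xi$ and then differentiating in $\xi_1,\ldots,\xi_k$ at $\vec\xi=0$.

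Write $F=\exp(\sum_j\xi_j A_j(a))$ and $M=\tilde{E}^a_{m,n}[F]$, so that $\partial_a\Psi=\tilde{E}^a_{m,n}[\partial_a F]/M + \tilde{E}^a_{m,n}[F\cdot\partial_a\log\tilde{Q}^a_{m,n}]/M$. By \eqref{eqn: R deriv} and the definition \eqref{eqn: tilde-der} of $\tilde{\p}$, one has $\partial_a g_j(a,\tilde{R}^1_{i,0}) = \tilde{\p}g_j(a,\tilde{R}^1_{i,0})$, whence $\partial_a F = F\sum_j\xi_j\sum_i\tilde{\p}g_j$; and \eqref{eqn: Q deriv} gives $\partial_a\log\tilde{Q}^a_{m,n}=\sum_i L^f - \tilde{E}^a_{m,n}[\sum_i L^f]$. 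Combining,
\[
\partial_a\Psi = \sum_{j=1}^k \xi_j\cdot\frac{\tilde{E}^a_{m,n}\bigl[F\sum_i\tilde{\p}g_j\bigr]}{M} + \Bigl(\frac{\tilde{E}^a_{m,n}\bigl[F\sum_i L^f\bigr]}{M} - \tilde{E}^a_{m,n}\Bigl[\sum_i L^f\Bigr]\Bigr).
\]

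Next I would apply $\partial_{\xi_1}\cdots\partial_{\xi_k}|_{\vec\xi=0}$ to this identity. The $\vec\xi$-independent subtracted term contributes nothing. For each $j$ the explicit factor $\xi_j$ in the first sum ensures, via the Leibniz rule, that only the piece where $\partial_{\xi_j}$ differentiates that $\xi_j$ survives at $\vec\xi=0$; since the ratio $\tilde{E}^a_{m,n}[F\sum_i\tilde{\p}g_j]/M$ equals $\partial_\eta|_{\eta=0}\log\tilde{E}^a_{m,n}[\exp(\sum_l\xi_l A_l+\eta\sum_i\tilde{\p}g_j)]$, applying the remaining $\partial_{\xi_l}$ ($l\neq j$) at $\vec\xi=0$ and using \eqref{eq: joint culumant formula 1} produces $\tilde\kappa_k^a(A_1,\ldots,A_{j-1},\sum_i\tilde{\p}g_j,A_{j+1},\ldots,A_k)$ --- the $\delta_j=1$ summand of the first sum in the statement. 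Analogously, $\tilde{E}^a_{m,n}[F\sum_i L^f]/M$ is $\partial_{\xi_{k+1}}|_{\xi_{k+1}=0}$ of the augmented log-MGF $\log\tilde{E}^a_{m,n}[\exp(\sum_l\xi_l A_l+\xi_{k+1}\sum_i L^f)]$, so differentiating in $\xi_1,\ldots,\xi_k$ at zero yields $\tilde\kappa_{k+1}^a(A_1,\ldots,A_k,\sum_i L^f)$, completing the identification.

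The main obstacle is purely bookkeeping --- recognizing the two ratios above as partial derivatives of enlarged log-MGFs so that they assemble into bona fide joint cumulants after the $\vec\xi$-differentiation. The analytic interchanges are trivial because everything is a finite sum of smooth functions in $a$. A pedestrian alternative is induction on $k$ obtained by applying \eqref{eqn: exact deriv 1} term by term to the partition expansion \eqref{eq: joint cumulant formula 2}, but the generating-function route keeps the combinatorics transparent.
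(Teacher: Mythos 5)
Your proof is correct and essentially matches the paper's approach: you split $\partial_a$ acting on the quenched log-MGF into the argument-dependence part (giving the $\tilde\partial g_j$ terms) and the measure-dependence part (giving the $\kappa_{k+1}$ term), and then identify each as a joint cumulant via augmented generating functions. The paper packages the same split using an auxiliary two-variable function $F(a,b)$ and the chain rule $\partial_a F(a,a)=(\partial_a F+\partial_b F)|_{b=a}$, but the underlying computation is the same.
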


\begin{proof}
Write $X_l(a):=\sum_{i=1}^{t_1\wedge r}g_l(a,\tilde{R}_{i,0}^1)$ for $l=1,\dots,k$.  We will perturb the parameter in the environment and the parameters in the arguments of the cumulant separately.  To this end, define 
\[
F(a,b):=\tilde{\kappa}_k^b\left(\sum_{i=1}^{t_1\wedge r}g_1(a,\tilde{R}_{i,0}^1),\sum_{i=1}^{t_1\wedge r}g_2(a,\tilde{R}_{i,0}^1),\cdots,\sum_{i=1}^{t_1\wedge r}g_k(a,\tilde{R}_{i,0}^1) \right).
\]
Using equation \eqref{eqn: R deriv} and multi-linearity of the joint cumulant gives 
\[
\frac{\p}{\p a}F(a,b)=\sum_{\substack{\delta_1+\dots+ \delta_k=1\\\delta_i\geq 0}}\tilde{\kappa}_k^b\left(\sum_{i=1}^{t_1\wedge r}\tilde{\p}^{\delta_1}g_1(a,\tilde{R}_{i,0}^1),\sum_{i=1}^{t_1\wedge r}\tilde{\p}^{\delta_2}g_2(a,\tilde{R}_{i,0}^1),\cdots,\sum_{i=1}^{t_1\wedge r}\tilde{\p}^{\delta_k}g_k(a,\tilde{R}_{i,0}^1) \right).\label{eqn: quenched joint cumulant 1}
\]
Now define $Y(b):=\sum_{i=1}^{t_1\wedge r}L^f(b,K^f(b,\eta_i))$.  Then 
\[
\frac{\p}{\p b} \tilde{E}^b_{m,n}[e^{\sum_{l=1}^k \xi_l X_l(a)}]=\tilde{E}^b_{m,n}[e^{\sum_{l=1}^k \xi_l X_l(a)}\overline{Y(b)}]
\]
where the centering is respect to $\tilde{E}^a_{m,n}$.  Thus
\begin{align}
\frac{\p}{\p b} F(a,b) &= \frac{\p}{\p\xi_1\dots\p\xi_k}\frac{\p}{\p b}\log\tilde{E}^b_{m,n}[e^{\sum_{l=1}^k \xi_l X_l(a)}]\Big|_{\xi_1=\cdots=\xi_k=0}\nonumber\\
%&=\frac{\p}{\p\xi_1\dots\p\xi_k}\tilde{E}^b_{m,n}[e^{\sum_{l=1}^k \xi_l X_l(a)}\overline{Y(b)}]\Big|_{\xi_1=\cdots=\xi_k=0}\\
&=\frac{\p}{\p\xi_1\dots\p\xi_{k+1}}\log\tilde{E}^b_{m,n}[e^{\sum_{l=1}^k \xi_l X_l(a)+\xi_{k+1}\ov{Y(b)}}]\Big|_{\xi_1=\cdots=\xi_k=\xi_{k+1}=0}\nonumber\\
&=\tilde{\kappa}_{k+1}^b\left(\sum_{i=1}^{t_1\wedge r}g_1(a,\tilde{R}_{i,0}^1),\sum_{i=1}^{t_1\wedge r}g_2(a,\tilde{R}_{i,0}^1),\cdots,\sum_{i=1}^{t_1\wedge r}g_k(a,\tilde{R}_{i,0}^1),Y(b) \right).\label{eqn: quenched joint cumulant 2}
\end{align}
Combining \eqref{eqn: quenched joint cumulant 1} and \eqref{eqn: quenched joint cumulant 2} gives
\[
\frac{\p}{\p a} F(a,a)=\left(\frac{\p F}{\p a} (a,b)+\frac{\p F}{\p b} (a,b)\right)\Big|_{b=a}
\]
yielding the desired result.
\end{proof}

Equation \eqref{eqn: Z deriv} and repeated application of the previous lemma now give
\begin{corollary}\label{cor: explicit deriv of log Z}
\[
\frac{\p^k}{\p a^k} \log Z_{m,n}(a)=\sum_{j=1}^k\sum_{\substack{\ell_1+\dots+\ell_j=k-j\\\ell_i\geq0}}\tilde{\kappa}^a_{j}\left(\sum_{i=1}^{t_1\wedge r}\tilde{\p}^{\ell_1}L^f(a,\tilde{R}_{i,0}^1),\cdots,\sum_{i=1}^{t_1\wedge r}\tilde{\partial}^{\ell_j}L^f(a,\tilde{R}_{i,0}^1)\right).
\]
Note that the $j=k$-th term is the $k$-th quenched cumulant of $\sum_{i=1}^{t_1\wedge r}L^f(a,\tilde{R}_{i,0}^1)$.
\end{corollary}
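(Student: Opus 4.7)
The plan is to prove the formula by induction on $k$, with the base case $k=1$ supplied directly by equation \eqref{eqn: Z deriv}, which reads
\[
\frac{\p}{\p a}\log Z_{m,n}(a)=\widetilde{E}^a_{m,n}\!\left[\sum_{i=1}^{t_1(x_\centerdot)\wedge r} L^f(a,\widetilde{R}^1_{i,0})\right]=\widetilde{\kappa}^a_1\!\left(\sum_{i=1}^{t_1\wedge r} L^f(a,\widetilde{R}^1_{i,0})\right),
\]
matching the claimed expression at $k=1$ (only $j=1$, $\ell_1=0$ is allowed).

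For the inductive step, assume the formula holds at level $k$ and differentiate both sides in $a$. On the right-hand side, each term is a quenched joint cumulant of the form
\[
\widetilde{\kappa}^a_j\!\left(\sum_{i=1}^{t_1\wedge r}\widetilde{\p}^{\ell_1}L^f(a,\widetilde{R}^1_{i,0}),\ldots,\sum_{i=1}^{t_1\wedge r}\widetilde{\p}^{\ell_j}L^f(a,\widetilde{R}^1_{i,0})\right),
\]
with $\ell_1+\cdots+\ell_j=k-j$. Apply Lemma \ref{lemma: deriv of kappa} with $g_s=\widetilde{\p}^{\ell_s}L^f$. Differentiation produces two kinds of contributions: (i) a sum over indices $s\in\{1,\ldots,j\}$ obtained by replacing $\widetilde{\p}^{\ell_s}L^f$ by $\widetilde{\p}^{\ell_s+1}L^f$ inside the $j$-th cumulant, and (ii) one new term obtained by appending an extra slot $\sum_{i=1}^{t_1\wedge r} L^f(a,\widetilde{R}^1_{i,0})$ and promoting the rank of the cumulant from $j$ to $j+1$.

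The remaining work is a bookkeeping check that these two types of contributions, summed over the multi-indices $(\ell_1,\ldots,\ell_j)$ with $\sum\ell_s=k-j$ and over $j=1,\ldots,k$, reassemble into the claimed formula at level $k+1$. Type (i) contributions stay at rank $j$ but shift the total $\sum\ell_s$ from $k-j$ to $k+1-j$, so they account for exactly the $j$-th summand of the level-$(k+1)$ formula. Type (ii) contributions create a new slot of order $0$ and raise the rank from $j$ to $j+1$, keeping $\sum\ell_s=k-j=(k+1)-(j+1)$, so they account for exactly the $(j+1)$-th summand of the level-$(k+1)$ formula. Combining these accounts for all terms at level $k+1$ (the $j=k+1$ case receives only type-(ii) contributions from $j=k$, and the $j=1$ case receives only type-(i) contributions from $j=1$), completing the induction.

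The main obstacle is purely combinatorial: one must verify that every multi-index $(\ell_1,\ldots,\ell_{j+1})$ with $\sum\ell_s = k+1-(j+1)$ produced by type (ii) actually arises from a unique $(\ell_1,\ldots,\ell_j)$ at the previous level with $\ell_{j+1}=0$ appended, and that each multi-index at level $k+1$ produced by type (i) arises from a unique predecessor with one of the $\ell_s$'s lowered by one. Both are straightforward once one writes the sums out explicitly; the verification is formal and does not require any new analytic input beyond Lemma \ref{lemma: deriv of kappa}.
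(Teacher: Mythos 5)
Your overall strategy—induct on $k$ with base case \eqref{eqn: Z deriv} and inductive step via Lemma \ref{lemma: deriv of kappa}—is the route the paper intends (the paper's own proof is literally the one-line remark that the corollary follows from \eqref{eqn: Z deriv} and repeated application of the lemma). But the combinatorial verification you claim is ``straightforward'' is in fact where the argument breaks down, and you have mis-stated it. You assert that each composition $(\ell'_1,\ldots,\ell'_j)$ at level $k+1$ produced by type (i) ``arises from a unique predecessor with one of the $\ell_s$'s lowered by one.'' That is false: it arises from a separate type-(i) contribution for \emph{every} index $s$ with $\ell'_s\ge 1$, and in addition from a type-(ii) contribution (appending a trailing $0$) whenever $\ell'_j=0$. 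The resulting multiplicities are not all equal to one.

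You can see this concretely at $k=3$. From the level-$2$ expression $\tilde{\kappa}_1(X_1)+\tilde{\kappa}_2(X_0,X_0)$ (writing $X_\ell:=\sum_{i=1}^{t_1\wedge r}\tilde{\p}^\ell L^f(a,\tilde{R}^1_{i,0})$), differentiating via Lemma \ref{lemma: deriv of kappa} gives
\[
\tilde{\kappa}_1(X_2)\;+\;\underbrace{\tilde{\kappa}_2(X_1,X_0)}_{\text{type (ii) from }\tilde{\kappa}_1(X_1)}\;+\;\underbrace{\tilde{\kappa}_2(X_1,X_0)+\tilde{\kappa}_2(X_0,X_1)}_{\text{type (i) from }\tilde{\kappa}_2(X_0,X_0)}\;+\;\tilde{\kappa}_3(X_0,X_0,X_0)
\;=\;\tilde{\kappa}_1(X_2)+3\,\tilde{\kappa}_2(X_0,X_1)+\tilde{\kappa}_3(X_0^{\,3}),
\]
whereas the displayed formula in the corollary at $k=3$, $j=2$ sums over the two compositions $(1,0)$ and $(0,1)$ of $1$, yielding only $2\,\tilde{\kappa}_2(X_0,X_1)$. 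So the identity you are trying to establish does not hold with unit coefficients, and your bookkeeping does not close the induction. (This also means the corollary as printed has an error in its coefficients; what survives, and what the downstream estimates in Lemma \ref{lemma: sigma norm} and Lemma \ref{lemma: truncation} actually use, is only that $\frac{\p^k}{\p a^k}\log Z_{m,n}(a)$ is a finite linear combination, with $N$-independent coefficients, of quenched joint cumulants $\tilde{\kappa}^a_j$ of the $X_\ell$'s with $j\le k$ and $\sum\ell_i=k-j$.) Before declaring the combinatorics routine, work out $k=3$ by hand; a correct proof must carry the multiplicities explicitly rather than claim uniqueness of predecessors.
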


\subsection{Application of integration by parts}\label{sec: app-ibp}
We now use the results obtained in this section thus far to provide exact expressions for the cumulants of the partition function $\log Z_{m,n}$ (see Corollary \ref{Cor: exact formula for cumulant of free energy}).

We now impose the following assumption on $f$ to have control of the moments of $\tilde{\p}^kL^f$.
\begin{hypothesis}\label{hyp: L moments}
Assume $f$ has non-empty, open support $\supp(f)$, non-empty $D(M_f)$, $f$ is smooth on its support, and for every $k\in \Z_+$, there exist $C_k\in C(D(M_f))$, continuous functions on $D(M_f)$, such that   
\begin{equation}
    \left|\tilde{\p}^k L^f(a,x) \right| \leq C_k(a) \left(1+ |\log x|^{k+1}\right)\quad \text{ for all } (a,x)\in D(M_f)\times \supp(f)\nonumber.
\end{equation}
\end{hypothesis}

The following theorem says that all functions appearing in the four models \eqref{model-IG}, \eqref{model-G}, \eqref{model-B}, \eqref{model-IB} satisfy this hypothesis.
\begin{theorem}\label{theorem: confirming hypothesis L-moments} 
Let $f$ be one of the functions appearing in Table \ref{table f}, meaning $f$ corresponds to a gamma, inverse-gamma, beta, inverse-beta, or beta-prime distribution.  Then $f$ satisfies Hypothesis \ref{hyp: L moments}.
\end{theorem}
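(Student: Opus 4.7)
The plan is to verify the three parts of Hypothesis \ref{hyp: L moments} case by case for the five densities $f$ in Table \ref{table f}. In each case $f$ is manifestly smooth on its open support (either $(0,\infty)$, $(0,1)$, or $(1,\infty)$), and $D(M_f)$ is straightforward to identify and non-empty: $(0,\infty)$ for the gamma and beta families, $(-\infty,0)$ for the inverse-gamma and inverse-beta, and $(-b,0)$ for the beta-prime. What requires real work is the derivative bound
\[
|\tilde{\p}^k L^f(a,x)|\leq C_k(a)(1+|\log x|^{k+1}).
\]

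I would begin with the base case $k=0$. Using the representation $L^f(a,x)=-\rho_{f,a}(x)^{-1}\mathbb{E}^a[(\log X-\psi_0^f(a))\ind_{\{X\leq x\}}]$, which follows from \eqref{eqn: Lf is positive}, one obtains $|L^f(a,x)|\leq C_0(a)(1+|\log x|)$ by an asymptotic analysis at each endpoint of $\supp(f)$. For $f(x)=e^{-bx}$, for instance, the integral $\int_0^x y^{a-1}(\psi_0^f(a)-\log y)e^{-by}\,dy$ is asymptotic to $-x^a\log x/a$ as $x\to 0^+$ and to $x^{a-1}(\log x)e^{-bx}/b$ as $x\to\infty$, so after division by $x^af(x)=x^a e^{-bx}$ one finds $L^f\sim -\log x/a$ near $0$ and $L^f\to 0$ at infinity. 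The inverse-gamma case reduces to the gamma via the change of variables $x\mapsto 1/x$, which preserves the $(1+|\log x|)$ bound since $|\log(1/x)|=|\log x|$; the inverse-beta reduces analogously to the beta, and the beta-prime is handled along the same lines.

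The second step is an induction on $k$ based on $\tilde{\p}G=\p_a G+xL^f\p_x G$. The inductive hypothesis is that
\[
\tilde{\p}^j L^f(a,x)=\sum_{\ell=0}^{j+1}q_{j,\ell}(a,x)(\log x)^\ell,
\]
where each $q_{j,\ell}$ is smooth on $D(M_f)\times\supp(f)$ and uniformly bounded in $x\in\supp(f)$ on compact subsets of $D(M_f)$. The operator $\p_a$ merely differentiates the coefficients $q_{j,\ell}$ and hence preserves this form; the operator $xL^f\p_x$ contributes one factor of $L^f$ (degree $1$ in $\log x$ by the base case) and applies $x\p_x$ to a polynomial in $\log x$ (which reduces its degree by at most one), giving total degree at most $(j+1)+1=j+2$. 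This closes the induction and yields the bound of Hypothesis \ref{hyp: L moments}.

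The main obstacle lies in the base-case boundary analysis for the beta and inverse-beta families at the finite endpoint $x=1$, where both $\rho_{f,a}(x)$ and the expectation $-\mathbb{E}^a[(\log X-\psi_0^f(a))\ind_{\{X\leq x\}}]$ vanish. A matched Taylor expansion at $y=1$ is needed to show that they vanish at the same order, so that their ratio $L^f$ is uniformly bounded near $x=1$; since $\log x$ itself is bounded there, the $(1+|\log x|)$ bound is then immediate. Matching these orders uniformly in $a$ on compacts of $D(M_f)$ is the one place in the argument where smoothness alone is not enough and one must compute directly.
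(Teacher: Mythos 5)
Your outline for the easy parts (open support, non-empty $D(M_f)$, smoothness, the $k=0$ estimate via endpoint asymptotics, and the reduction of inverse-gamma/inverse-beta to gamma/beta by $x\mapsto 1/x$) matches the paper, but the inductive step has a genuine gap that makes the argument incomplete.

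You posit that $\tilde{\p}^j L^f(a,x)=\sum_{\ell=0}^{j+1}q_{j,\ell}(a,x)(\log x)^\ell$ with the $q_{j,\ell}$ smooth and \emph{uniformly bounded} in $x$, and then argue that $\tilde{\p}=\p_a+xL^f\p_x$ raises the $\log$-degree by at most one because ``$x\p_x$ applied to a polynomial in $\log x$ reduces its degree by at most one.'' That last assertion is only true when the coefficients do not depend on $x$. Here they do, and the operator $xL^f\p_x$ produces
\[
xL^f\p_x\!\left[\,q_{j,\ell}(a,x)(\log x)^\ell\,\right]
= xL^f(a,x)\,\p_x q_{j,\ell}(a,x)\,(\log x)^\ell + \ell\,L^f(a,x)\,q_{j,\ell}(a,x)\,(\log x)^{\ell-1}.
\]
The second term is fine. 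The first term involves $x\,\p_x q_{j,\ell}$, and your inductive hypothesis (boundedness of $q_{j,\ell}$) gives no control whatsoever over its $x$-derivative; smoothness on the open set $\supp(f)$ does not prevent $x\,\p_x q_{j,\ell}$ from blowing up at the boundary. So the induction does not close as stated: either you must strengthen the hypothesis to control all $x$-derivatives of the coefficients (and then the same problem reappears one order higher), or you need a structural reason why the $x$-derivatives are tame. Note the representation also does not cleanly start at $j=0$: there is no canonical way to write $L^f=q_{0,0}+q_{0,1}\log x$ with \emph{both} coefficients uniformly bounded, since near the endpoint where $L^f\sim -\log x/a$ the split is not unique and near $\log x=0$ the naive choice $q_{0,1}=L^f/\log x$ is singular.

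The paper circumvents exactly this issue by not differentiating generic bounded coefficients at all. It works inside the algebra $\mathcal{A}(F)$ generated over $C^\infty(D(M_f))$ by $\log x$, $T(h_n)$, and $r\cdot T(h_n)$ (where $r=xf'/f$), with an explicit degree filtration. The crucial point is the closed formula in Lemma \ref{lemma: Lf deriv recursion},
\[
\tilde{\p}\,T(h)= T\circ S(h) - \bigl[(a+r)L^f+\log x\bigr]T(h) + h\,L^f,
\]
which comes from the fundamental theorem of calculus applied to the integral defining $T$: this computes $x\p_x T(h)=-(a+r)T(h)+h$ exactly, so no uncontrolled $x$-derivative of a coefficient ever appears. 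Then one only has to check two concrete model-dependent facts (the growth of $T(h_n)$ and $rT(h_n)$, and that $\tilde{\p}r\cdot T(h_n)$ stays in $A_{n+1}$), which is done by endpoint analysis per model as you sketch for $k=0$. To repair your argument you would need a substitute for this closed-form control of $x\p_x$ on your coefficients; without it, the claim ``preserves the form'' is unjustified.
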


The proof of Theorem 6 is relegated to the appendix.

\begin{lemma}\label{lemma: hyp consequence}
Assume $f$ satisfies Hypothesis \ref{hyp: L moments}.  Let $[a_0,a_1]\subset D(M_f)$, $\eta \sim\text{uniform}(0,1)$,  and $k\in \N$.
Then the random variable
\[
\sup_{a\in [a_0,a_1]}\left|\t{\p}^kL^f(a,H^f(a,\eta))\right|
\]
has finite moments of all orders.
\begin{proof}
Put $Y:=\sup_{a\in [a_0,a_1]}\left|\t{\p}^kL^f(a,H^f(a,\eta))\right|$.  By Hypothesis \ref{hyp: L moments}, there exists a constant $C>0$ such that 
\begin{align}
    \left|\t{\p}^kL^f(a,H^f(a,\eta))\right|&\leq C \left(1 +|\log H^f(a,\eta))|^{k+1}\right)\\
    &\leq C\left(1+|\log H^f(a_0,\eta))|^{k+1}+|\log H^f(a_1,\eta))|^{k+1}\right). 
\end{align}
The last inequality follows from the monotonicity $a\mapsto H^f(a,x)$ (by equations \eqref{eqn: Lf is positive} and \eqref{eqn: H-deriv}) and holds for all $a\in [a_0,a_1]$.
Since $H^f(a_j,\eta)\sim m_f(a_j)$ for $j=0,1$, and $a_0,a_1\in D(M_f)$, both $\log H^f(a_0,\eta)$ and $\log H^f(a_1,\eta)$ have finite exponential moments. Thus $Y$ has  finite moments of all orders. 
\end{proof}

\end{lemma}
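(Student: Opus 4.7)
The plan is to combine the pointwise bound furnished by Hypothesis \ref{hyp: L moments} with the monotonicity of $a \mapsto H^f(a,\eta)$ in $a$, thereby reducing the random variable inside the supremum to a deterministic constant times a polynomial in two sub-exponential random variables. First I would apply the hypothesis to write $|\t{\p}^k L^f(a,x)| \leq C_k(a)(1+|\log x|^{k+1})$, and then use continuity of $C_k$ on $D(M_f)$ and compactness of $[a_0,a_1]$ to absorb the $a$-dependence in the prefactor into a single finite constant.

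Next I would address the $a$-dependence inside $|\log H^f(a,\eta)|^{k+1}$ using the identity \eqref{eqn: H-deriv}, namely $\frac{\p}{\p a}\log H^f(a,x) = L^f(a,H^f(a,x))$, together with the strict positivity \eqref{eqn: Lf is positive} of $L^f$. These imply $a \mapsto H^f(a,\eta)$ is strictly increasing, so $\log H^f(a,\eta)$ lies between $\log H^f(a_0,\eta)$ and $\log H^f(a_1,\eta)$; consequently
\[
|\log H^f(a,\eta)| \leq |\log H^f(a_0,\eta)| + |\log H^f(a_1,\eta)|
\]
uniformly in $a \in [a_0,a_1]$. Raising to the $(k+1)$-st power and combining with the bound from the hypothesis yields a deterministic upper bound on $Y$ of the form $C\bigl(1 + |\log H^f(a_0,\eta)|^{k+1} + |\log H^f(a_1,\eta)|^{k+1}\bigr)$.

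Finally, since $\eta \sim \mathrm{uniform}(0,1)$, the coupling construction of Section \ref{sec: coupling} gives $H^f(a_j,\eta) \sim m_f(a_j)$ for $j = 0,1$; then Remark \ref{remark - Mellin consequences}(1) says $\log H^f(a_j,\eta)$ has finite exponential moments, in particular finite moments of all orders. Minkowski's inequality applied to the $p$-th moment of the above bound concludes $\E[Y^p] < \infty$ for every $p$. The only delicate step is translating positivity of $L^f$ into the pointwise two-sided domination of $|\log H^f(a,\eta)|$ by its endpoint values, but this is immediate from the chain rule and the sign of $L^f$, so I do not anticipate a real obstacle here; the substantive work has already been carried out in verifying Hypothesis \ref{hyp: L moments} (Theorem \ref{theorem: confirming hypothesis L-moments}).
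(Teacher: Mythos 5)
Your proposal is correct and follows essentially the same route as the paper: bound $|\t{\p}^kL^f|$ via Hypothesis \ref{hyp: L moments}, absorb the $a$-dependence of $C_k(a)$ via compactness and continuity, use the monotonicity of $a\mapsto H^f(a,\eta)$ (from \eqref{eqn: H-deriv} and \eqref{eqn: Lf is positive}) to dominate $|\log H^f(a,\eta)|$ by the endpoint values, and conclude by noting $H^f(a_j,\eta)\sim m_f(a_j)$ has $\log$ with finite exponential moments. The only cosmetic difference is that you raise the sum $|\log H^f(a_0,\eta)|+|\log H^f(a_1,\eta)|$ to the $(k+1)$st power, whereas the paper bounds $|\log H^f(a,\eta)|^{k+1}$ directly by $|\log H^f(a_0,\eta)|^{k+1}+|\log H^f(a_1,\eta)|^{k+1}$, but both are equivalent up to constants.
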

Now write $\tilde{\mathbb{P}}$, $\tilde{\E}$ for the probability measure and expectation corresponding to the environment $\tilde{\omega}$, as defined in Section \ref{sec: coupling}, and $\mathbb{P}$, $\E$ for probability measure and expectation corresponding to the environment $\omega$.  Write
\begin{align*}
\widetilde{\sigma}_0(t_1\wedge r)&:= \log Z_{m,n}(a)-\widetilde{\E}[\log Z_{m,n}(a)] \text{ and }\\
\widetilde{\sigma}_k(t_1\wedge r)&:=\frac{\p^{k}}{\p a^{k}} \log Z_{m,n}(a)\quad \text{ for } k\in \N.
\end{align*}
Similarly, define
\begin{align}
\sigma_0(t_1\wedge r)&:=\log Z_{m,n}-\E[\log Z_{m,n}] \quad \text{ and }\nonumber\\
\sigma_k(t_1\wedge r)&:= \sum_{j=1}^k\sum_{\substack{\ell_1+\dots+\ell_j=k-j\\\ell_i\geq0}}\kappa^Q_j\left(\sum_{i=1}^{t_1\wedge r}\tilde{\p}^{\ell_1}L^f(a,R_{i,0}^1),\cdots,\sum_{i=1}^{t_1\wedge r}\tilde{\partial}^{\ell_j}L^f(a,R_{i,0}^1)\right) \text{ for } k\in \N \label{def: sigma},\\
\end{align}
where $\kappa_k^Q(X_1,\dots,X_k)$ denotes the joint-cumulant of the random variable $X_1,\dots,X_k$ with respect to the quenched measure $Q_{m,n}$. 
By Corollary \ref{cor: explicit deriv of log Z},  $\sigma_k(t_1\wedge r)\overset{d}{=}\tilde{\sigma}_k^a(t_1\wedge r)$.
Recall that our environment $\tilde{\omega}$ has only changed the southern boundary random variables between the origin and the point $(r,0)$, so $\log Z_{m,n}(a)$ only has an $a$-dependence between these points.

\begin{lemma} If $f$ satisfies Hypothesis \ref{hyp: L moments} %\textcolor{red}{$R^2, Y^1,Y^2$ have finite square log-moments},
and $S_r=\sum_{i=1}^r \log R^1_{i,0}$, then for any $j,k\geq 0$,
\begin{equation}\label{eqn: ibp}
\mathbb{E}[\left(\ov{\log Z_{m,n}}\right)^j p_k(S_r,a;r)]=\sum_{\substack{\ell_1+\cdots+\ell_j=k\\ \ell_i\ge 0}}\frac{k!}{\ell_1!\cdots \ell_j!}\E\left[\prod_{i=1}^j \sigma_{\ell_i}(t_1\wedge r)\right].
\end{equation}
\end{lemma}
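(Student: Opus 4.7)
The strategy is to combine a change-of-measure argument, using the generating function \eqref{eqn: generating function} for the polynomials $p_k$, with the coupling of Section~\ref{sec: coupling}. Let $c := \E[\log Z_{m,n}]$, which is a constant, and set $\phi := (\log Z_{m,n} - c)^j$, viewed as a function of the environment (so $\phi$ does not depend on the parameter $a$). For $\lambda$ in a neighborhood of $0$, write $\E_{a+\lambda}$ for the expectation in which the first $r$ southern boundary weights have distribution $m_f(a+\lambda)$ while all other weights are unchanged. Using the explicit form of the density ratio $\rho_{f,a+\lambda}/\rho_{f,a}$ and \eqref{eqn: generating function},
\[
\E_{a+\lambda}[\phi] = \E_a\!\left[\phi\,\frac{g(S_r,a+\lambda;r)}{g(S_r,a;r)}\right] = \sum_{k\ge 0}\frac{\lambda^k}{k!}\,\E_a\!\left[\phi\cdot p_k(S_r,a;r)\right].
\]
Differentiating $k$ times at $\lambda=0$ (after justifying termwise differentiation, exactly as in the proof of Lemma~\ref{lemma: higher derivatives}) yields
\[
\E\!\left[(\ov{\log Z_{m,n}})^j\,p_k(S_r,a;r)\right] = \frac{\p^k}{\p\lambda^k}\bigg|_{\lambda=0}\E_{a+\lambda}\!\left[(\log Z_{m,n} - c)^j\right].
\]

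I would then invoke the coupling of Section~\ref{sec: coupling} to rewrite $\E_{a+\lambda}[(\log Z_{m,n}-c)^j] = \widetilde{\E}[(\log Z_{m,n}(a+\lambda)-c)^j]$, where $\log Z_{m,n}(a+\lambda)$ is smooth in $\lambda$ on a neighborhood of $0$ by Corollary~\ref{cor: explicit deriv of log Z}. Passing the $\lambda$-derivative inside $\widetilde{\E}$ and applying the generalized Leibniz (multinomial) rule gives
\[
\frac{\p^k}{\p\lambda^k}\bigg|_{\lambda=0}(\log Z_{m,n}(a+\lambda)-c)^j = \sum_{\ell_1+\cdots+\ell_j=k}\frac{k!}{\ell_1!\cdots\ell_j!}\prod_{i=1}^j\widetilde{\sigma}_{\ell_i}(t_1\wedge r),
\]
where the $\ell_i = 0$ factors are $\log Z_{m,n}(a) - c = \widetilde{\sigma}_0(t_1\wedge r)$ (since $c = \widetilde{\E}[\log Z_{m,n}(a)]$) and the $\ell_i \ge 1$ factors are $\widetilde{\sigma}_{\ell_i}(t_1\wedge r)$ by definition. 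Since Corollary~\ref{cor: explicit deriv of log Z} yields the joint distributional equality $(\widetilde{\sigma}_\ell)_\ell \stackrel{d}{=}(\sigma_\ell)_\ell$ (both being the same measurable functions of equally-distributed environments), taking $\widetilde{\E}$ produces precisely the right-hand side of \eqref{eqn: ibp}.

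The principal technical obstacle is the interchange of $\frac{\p^k}{\p\lambda^k}$ with expectation, both in the initial change-of-measure identity and in the final step. This requires uniform-in-$\lambda$ control, over a compact neighborhood of $0$, of moments of the density-ratio factors and of the products $\prod_i\widetilde{\sigma}_{\ell_i}(t_1\wedge r)$. This is where Hypothesis~\ref{hyp: L moments}, Lemma~\ref{lemma: hyp consequence}, and the explicit formula in Corollary~\ref{cor: explicit deriv of log Z} enter: each $\widetilde{\sigma}_\ell$ is a quenched cumulant of sums of $\widetilde{\p}^{\ell'}L^f(a+\lambda,\cdot)$ evaluated at the coupled boundary weights, and the monotonicity of $a\mapsto H^f(a,\eta)$ (via \eqref{eqn: Lf is positive} and \eqref{eqn: H-deriv}) dominates these uniformly in $\lambda$ by random variables with finite moments of all orders. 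Combined with the finite exponential moments of $\log Z_{m,n}$, this gives the dominated-convergence bounds needed at both steps.
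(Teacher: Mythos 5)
Your proof is correct and follows essentially the same route as the paper's: establish an integration-by-parts identity converting the $p_k(S_r,a;r)$ factor into a $k$-fold $a$-derivative of an expectation, then use the coupling of Section~\ref{sec: coupling} and the multinomial Leibniz rule to compute that derivative, and conclude via the distributional equality $(\widetilde{\sigma}_\ell)\stackrel{d}{=}(\sigma_\ell)$. The only variation is in the first step: the paper conditions on the $\sigma$-algebra generated by $R^1_{1,0},\dots,R^1_{r,0}$ and then invokes Lemma~\ref{lemma: higher derivatives}, whereas you derive the same identity directly from the change-of-measure formula, observing that the Radon--Nikodym derivative of the tilted boundary law is the function $g(S_r,a+\lambda;r)/g(S_r,a;r)$ of $S_r$ alone, and then expand via the generating function \eqref{eqn: generating function}. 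This avoids the conditioning step and is arguably slightly cleaner, but it relies on exactly the same structural facts (the Mellin tilt only touches $S_r$, and \eqref{h recursion}), so it is a repackaging rather than a genuinely different argument. Your handling of the centering constant $c=\E^a[\log Z_{m,n}]$ held fixed while $\lambda$ varies plays the same role as the auxiliary parameter $b$ in the paper's proof, evaluated at $b=a$ at the end. The technical justifications you defer (termwise differentiation, interchange of $\p^k/\p\lambda^k$ with $\widetilde{\E}$) are indeed covered by Hypothesis~\ref{hyp: L moments}, Lemma~\ref{lemma: hyp consequence}, and Corollary~\ref{cor: explicit deriv of log Z}, as you indicate.
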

\begin{proof}
Write $g(a):=\E^{a}[\log Z_{m,n}]$.  Then the left-hand side of equation \eqref{eqn: ibp} is equal to
\[
\E[\left(\log Z_{m,n}-g(b)\right)^j p_k(S_r,a;r)]\big|_{b=a}.
\]
Fix $b\in D(M_f)$ and let $\mathcal{F}$ be the sigma-algebra generated by the random variables $R^1_{1,0}\dots,R^1_{r,0}$. Then there exists a measurable function function $A:\R^r\rightarrow \R$ such that $A(R^1_{1,0},\cdots,R^1_{r,0})=\E[\left(\log Z_{m,n}-g(b)\right)^j|\mathcal{F}]$ almost surely.  By Lemma A.1 from \cite{CN2018}, $A\in L^2(\mathbb{P})$.  Since $S_r\in \mathcal{F}$, Lemma \ref{lemma: higher derivatives} gives
\begin{align}
\mathbb{E}[(\log Z_{m,n}-g(b))^j p_k(S_r,a;r))]&=\mathbb{E}^a[A(R^1_{1,0},\cdots,R^1_{r,0}) p_k(S_r,a;r))]\\
&=\frac{\p^k}{\p a^k} \mathbb{E}^a[A(R^1_{1,0},\cdots,R^1_{r,0}) )]\\
&=\frac{\p^k}{\p a^k} \mathbb{E}[\left(\log Z_{m,n}-g(b)\right)^j]\label{eqn: IBP interim},
\end{align}
where $\E^a$ emphasizes that we are only taking expectations over $\{R^1_{i,0}\}_{i=1}^r$.
Now fix $a_0$ and $a_1$ such that $a\in [a_0,a_1]\subset D(M_f)$.  
Using Corollary \ref{cor: explicit deriv of log Z}, Lemma \ref{lemma: hyp consequence}, and $t_1\leq m$, we see that 
\[
\widetilde{\E}\left[\sup_{a\in[a_0,a_1]}\left|\frac{\p^k}{\p a^k} \left(\log Z_{m,n}(a)-g(b)\right)^j\right|\right]<\infty. 
\]

Thus
\begin{align*}
\eqref{eqn: IBP interim}=\frac{\p^k}{\p a^k} \tilde{\mathbb{E}}\left[\left(\log Z_{m,n}(a)-g(b)\right)^j\right]&=\tilde{\mathbb{E}}\left[\frac{\p^k}{\p a^k}\left(\log Z_{m,n}(a)-g(b)\right)^j\right]\\
&=\sum_{\substack{\ell_1+\cdots+\ell_j=k\\ \ell_i\ge 0}}\frac{k!}{\ell_1!\cdots \ell_j!}\tilde{\mathbb{E}}\left[\prod_{i=1}^j \frac{\p^{\ell_i}}{\p a^{\ell_i}}\left(\log Z_{m,n}(a)-g(b)\right)\right]
\end{align*}
Therefore
\begin{align*}
\eqref{eqn: IBP interim}\Big |_{b=a}    &=\sum_{\substack{\ell_1+\cdots+\ell_j=k\\ \ell_i\ge 0}}\frac{k!}{\ell_1!\cdots \ell_j!}\widetilde{\mathbb{E}}\left[\prod_{i=1}^j \widetilde{\sigma}_{\ell_i}(t_1\wedge r)\right]\\
&=\sum_{\substack{\ell_1+\cdots+\ell_j=k\\ \ell_i\ge 0}}\frac{k!}{\ell_1!\cdots \ell_j!}\mathbb{E}\left[\prod_{i=1}^j \sigma_{\ell_i}(t_1\wedge r)\right].
\end{align*}
\end{proof}

\begin{corollary}\label{Cor: exact formula for cumulant of free energy}
When $r=m$, and $k$ is even,
\begin{align*}
    \kappa_k(\log Z_{m,n})&=\kappa_k(E_n)-\kappa_k(S_m)-\sum_{j=1}^{k-1} \binom{k}{j}(-1)^j\kappa(\underbrace{\log Z_{m,n},\cdots,\log Z_{m,n}}_{j \text{ times}},\underbrace{S_m,\cdots,S_m}_{k-j \text{ times}})\,\,\text{ and}\\
    &=n\kappa_k(\log R^2)-m\kappa_k(\log R^1)\\
    &+\sum_{\pi\in \mathcal{P}} (|\pi|-1)!(-1)^{|\pi|}\sum_{j=1}^{k-1} \binom{k}{j}(-1)^j\prod_{B\in \pi} \mathbb{E}\left[(\overline{\log Z_{m,n}})^{a_{j,B}}p_{b_{j,B}}(S_m,a;m)\right],
\end{align*}
where $a_{j,B}=|B \cap\{1,\ldots, j\}|$, $b_{j,B}=|B \cap \{j+1,\ldots,k\}|=|B|-a_{j,B}$.
Moreover, 
\[
\mathbb{E}[(\overline{\log Z_{m,n}})^j p_k(S_m,a;m))]=\sum_{\substack{\ell_1+\cdots+\ell_j=k\\ \ell_i\ge 0}}\frac{k!}{\ell_1!\cdots \ell_j!}\E\left[\prod_{i=1}^j \sigma_{\ell_i}(t_1)\right].
\]
\end{corollary}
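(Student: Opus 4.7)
The corollary is essentially an assembly of previously-established ingredients, so my plan is to stitch them together with careful sign bookkeeping. First, I would specialize equation \eqref{eq: cumulant lemma 1} of Lemma \ref{lemma: cumulant of log Z to joint cumulants} to even $k$: then $(-1)^k = 1$ and $(-1)^{k-j} = (-1)^j$, which produces the first displayed identity verbatim. The down-right clause of that same lemma immediately converts $\kappa_k(E_n)$ into $n\kappa_k(\log R^2)$ and $\kappa_k(S_m)$ into $m\kappa_k(\log R^1)$, yielding the first two terms of the second display.

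Next I would expand each joint cumulant $\kappa(\log Z_{m,n},\ldots,\log Z_{m,n},S_m,\ldots,S_m)$ (with $j$ copies of $\log Z_{m,n}$ and $k-j$ copies of $S_m$) via Lemma \ref{lemma: joint cumulants to polynomials} with $r = m$. Because joint cumulants of order at least two are invariant under translation of any argument by a constant, I may freely replace $\log Z_{m,n}$ by $\overline{\log Z_{m,n}}$ inside the cumulants to match the block contributions $\mathbb{E}[(\overline{\log Z_{m,n}})^{a_{j,B}} p_{b_{j,B}}(S_m, a; m)]$ that appear in Lemma \ref{lemma: joint cumulants to polynomials}. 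Substituting this expansion into the first display, swapping the $\pi$-sum and $j$-sum, and rearranging signs via $(-1)^j(-1)^{|\pi|-1} = -(-1)^{j+|\pi|}$ (the extra minus is absorbed into the leading negation of the joint-cumulant sum), gives the second displayed equation.

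The final identity of the corollary is precisely equation \eqref{eqn: ibp} applied with $r = m$: since the exit point satisfies $t_1 \leq m$ deterministically, the truncation $t_1 \wedge m$ collapses to $t_1$, and every $\sigma_{\ell_i}(t_1 \wedge m)$ factor becomes $\sigma_{\ell_i}(t_1)$ as stated.

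No genuine mathematical obstacle is anticipated, since the corollary is a direct combination of earlier identities. The main source of potential error is purely bookkeeping: tracking signs and binomial coefficients correctly, and verifying that the partition restriction in Lemma \ref{lemma: joint cumulants to polynomials} (partitions $\pi$ with no block contained entirely in $\{j+1,\ldots,k\}$) is compatible with the outer summation over $j$ in the corollary. This last point relies on the identity $\mathbb{E}[p_n(S_m,a;m)] = 0$ for $n \geq 1$, which follows from \eqref{eqn: ibp-formula-A} applied to $A\equiv 1$, so that any partition-and-$j$ pair with an offending block contributes zero and can safely be included in the outer $\mathcal{P}$-sum.
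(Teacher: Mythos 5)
Your assembly is essentially correct and matches the route the paper intends (the corollary has no separate proof in the paper---it is a direct combination of Lemma~\ref{lemma: cumulant of log Z to joint cumulants}, Lemma~\ref{lemma: joint cumulants to polynomials}, and equation~\eqref{eqn: ibp}). The even-$k$ sign simplification, the down-right identifications, the swap of the $j$ and $\pi$ sums with the resulting $(-1)^{|\pi|}$, the use of $\E[p_n(S_m,a;m)]=0$ to relax the partition restriction, and the observation that $t_1\wedge m=t_1$ are all the right ingredients.

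One bookkeeping point you pass over silently: as stated, Lemma~\ref{lemma: joint cumulants to polynomials} carries an additive term $r\psi^f_{k-j}(a)$ on its right-hand side, which if taken literally would produce leftover $m\psi^f$-terms in the corollary's second display that are not there. In fact this term arises from the factor $-r\log M_f(a)+r\log M_f(a+\xi_{j+1}+\cdots+\xi_k)$ in the cumulant generating function, which has no dependence on $\xi_1,\dots,\xi_j$; for $j\ge1$ the mixed derivative $\partial^k/\partial\xi_1\cdots\partial\xi_k$ of that factor is therefore zero, so the extra term vanishes identically in the range $1\le j\le k-1$ you use. A careful write-up should note this so the cancellation is not invisible.
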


\section{Estimates for the central moments}\label{sec: estimates for central}

\begin{lemma}\label{lemma: sums to t}
Let $0\leq r$ and put $S_n=\sum_{i=1}^n g(a,R_{i,0})$ where $g(a,R_{i,0})$ has finite moments of all orders.  Recall
the notation \eqref{def: annealed prob}
for the annealed expectation with respect to the polymer environment. Then, for all $k\in \N$ there exist finite constants $C_k=C_k(a)>0$ which are locally bounded in $a$, such that 
\[
E_{m,n}[\left(\ov{S_{t_1}-S_{t_1\wedge r}}\right)^k]\leq C_k \left(E_{m,n}[(t_1-t_1\wedge r)^k]+1 \right) \text{ for all } (m,n)\in \N^2.
\]
Here the centering is with respect to the annealed measure $E_{m,n}$.
\end{lemma}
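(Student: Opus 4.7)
If $r \geq m$, then $t_1 \leq m \leq r$ forces $t_1 - t_1 \wedge r \equiv 0$ and the claim holds trivially, so assume $r < m$. Set $N := (t_1 - r)_+ = t_1 - t_1 \wedge r$ and $h_i := g(a, R^1_{r+i, 0})$ for $i = 1, \ldots, m-r$; by hypothesis these are i.i.d.\ with moments of all orders, with $L^p$-norms continuous in $a$ (cf.\ Remark~\ref{remark - Mellin consequences}). With $\mu := \E[h_1]$, decompose
\[
S_{t_1} - S_{t_1 \wedge r} \;=\; \sum_{i=1}^{N} h_i \;=\; \mu N + T_N, \qquad T_N := \sum_{i=1}^{N} (h_i - \mu).
\]
Centering under $E_{m,n}$ and applying Minkowski's inequality in $L^k(E_{m,n})$, the task reduces to separately bounding
\[
|\mu|^k\, E_{m,n}\!\left[\,\overline{N}^{\,k}\,\right] \quad \text{and} \quad E_{m,n}\!\left[\,\left| T_N - E_{m,n}[T_N]\right|^k\,\right]
\]
each by $C_k\!\left(E_{m,n}[N^k] + 1\right)$.

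The first quantity is handled by Jensen's inequality: $|E_{m,n}[N]|^k \leq E_{m,n}[N^k]$ yields $E_{m,n}[\overline{N}^{\,k}] \leq 2^k E_{m,n}[N^k]$. For the second, the triangle inequality combined with Jensen reduces matters to bounding $E_{m,n}[|T_N|^k]$. Applying Jensen's inequality pointwise,
\[
|T_N|^k \;\leq\; N^{k-1} \sum_{i=1}^{m-r} \mathbbm{1}(N \geq i)\, |h_i - \mu|^k ,
\]
and taking annealed expectation produces a sum over $i$ of $E_{m,n}\bigl[N^{k-1}\, \mathbbm{1}(N \geq i)\, |h_i - \mu|^k\bigr]$. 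I would then estimate each summand by H\"older's inequality at an exponent $p > 1$ close to $1$, using $\E[|h_1 - \mu|^{kp}] < \infty$ to separate the pure environment factor $|h_i - \mu|^k$ from the path-and-environment factor $N^{k-1} \mathbbm{1}(N \geq i)$. Combined with the Markov-type bound $\mathbbm{1}(N \geq i) \leq (N/i)^{\alpha}$ for $\alpha$ chosen large enough to make $\sum_i i^{-\alpha/q}$ converge, the sum over $i$ collapses to a multiple of a high moment of $N$, which an optimization over $(p,\alpha)$ reduces back to $E_{m,n}[N^k]$ modulo lower-order constants, the latter absorbed into the additive ``$+1$''.

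\medskip
\noindent\textbf{Main obstacle.}
The critical difficulty is the coupling between the random time $N = (t_1 - r)_+$ and the i.i.d.\ weights $\{h_i\}$: under the annealed measure, $N$ depends on the entire polymer environment through the quenched measure $Q_{m,n}$, so $N$ is \emph{not} a stopping time for the filtration $\sigma(h_1, \ldots, h_j)$, ruling out a direct Burkholder-Davis-Gundy- or Wald-type argument for stopped i.i.d.\ sums. The decoupling therefore has to proceed via H\"older, and fine-tuning the exponents so that the final answer is proportional to $E_{m,n}[N^k]$ rather than to a strictly larger moment of $N$ is the delicate step; this is where the ``$+1$'' on the right-hand side plays its role, absorbing H\"older constants and lower-order terms. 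Finally, local boundedness of $C_k$ in $a$ follows from the smoothness of $M_f(a)$ on $D(M_f)$ (Remark~\ref{remark - Mellin consequences}), which yields continuous $a$-dependence of $\|h_1 - \mu\|_{L^p(\mathbb{P})}$ for each $p$.
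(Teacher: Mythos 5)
Your decomposition $S_{t_1}-S_{t_1\wedge r}=\mu N+T_N$, with $N=t_1-t_1\wedge r$, and the reduction via Minkowski and Jensen to bounding $|\mu|^k E_{m,n}[|\ov{N}|^k]$ and $E_{m,n}[|T_N|^k]$ are both sound; the first term is handled correctly. The gap is in the estimate for $E_{m,n}[|T_N|^k]$. After the pointwise Jensen bound $|T_N|^k\le N^{k-1}\sum_{i}\ind\{N\ge i\}|h_i-\mu|^k$, the H\"older--Markov scheme you describe yields, for any admissible $q>1$ and $\alpha>q$ (the latter needed for $\sum_i i^{-\alpha/q}<\infty$), a bound proportional to $E_{m,n}[N^{(k-1)q+\alpha}]^{1/q}$, and the exponent $(k-1)q+\alpha$ is strictly greater than $kq$ for every such choice. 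No optimization over $(q,\alpha)$ closes this: by Jensen, $E_{m,n}[N^{(k-1)q+\alpha}]^{1/q}\ge E_{m,n}[N^{\,k-1+\alpha/q}]$ with $k-1+\alpha/q>k$, and a bound of the form $E_{m,n}[N^{k+\delta}]\le C\bigl(E_{m,n}[N^k]+1\bigr)$ with $C$ uniform in $(m,n)$ is false in general (take $N$ concentrated at a single large value $a\le m-r$; the ratio grows like $a^{\delta}$, and $m-r$ is unbounded in the application). The root cause is that the pointwise Jensen step discards the cancellation in the centered sum: a centered i.i.d.\ sum of length $j$ is typically of order $j^{1/2}$, not $j$, and this is exactly the gain needed to land on $E_{m,n}[N^k]$.

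The paper's proof captures this cancellation directly. It writes $E_{m,n}\bigl[(\ov{S_{t_1}-S_{t_1\wedge r}})^k\bigr]=\E\bigl[\sum_{l>r}Q_{m,n}(t_1=l)(\ov{S_l-S_r})^k\bigr]$ plus a boundary term from $\{t_1\le r\}$, and for each $l$ splits on the event $\{\ov{S_l-S_r}>l-r\}$. On the complement, $(\ov{S_l-S_r})^k\le(l-r)^k$, and $\sum_{l>r}(l-r)^kQ_{m,n}(t_1=l)$ integrates against $\E$ to exactly $E_{m,n}[(t_1-t_1\wedge r)^k]$, producing the $E_{m,n}[N^k]$ term with no higher moment of $N$ ever appearing. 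On the large-deviation event, the factor $Q_{m,n}(t_1=l)\le1$ is dropped and $\E\bigl[(\ov{S_l-S_r})^k\ind\{\ov{S_l-S_r}>l-r\}\bigr]$ is shown to decay like $l^{-k}$ (Cauchy--Schwarz together with a high-moment Markov bound), so the sum over $l$ converges and contributes the additive constant. The boundary term is handled by a small variant of Lemma 4.2 of \cite{S}, which gives $E_{m,n}[S_{t_1}-S_{t_1\wedge r}]\le C(E_{m,n}[t_1-t_1\wedge r]+1)$. The idea you are missing is this pointwise threshold at $l-r$, which separates an $E_{m,n}[N^k]$ contribution (typical increment) from an $O(1)$ contribution (rare large increment); your H\"older route cannot avoid a moment of $N$ of order strictly greater than $k$.
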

\begin{proof}
\begin{align}
E_{m,n}\left[\left(\ov{S_{t_1}-S_{t_1\wedge r}}\right)^k\right]&=E_{m,n}\left[\sum_{l>r}\ind_{\{t_1=l\}}\left(\ov{S_l-S_{l\wedge r}}\right)^k\right]\label{eqn: S ub 1}\\
&+(-1)^k P_{m,n}(t_1\leq r) E_{m,n}[S_{t_1}-S_{t_1\wedge r}]^k.\label{eqn: S ub 2}
\end{align}
We now treat \eqref{eqn: S ub 1} and \eqref{eqn: S ub 2} separately.
\begin{align}
&\eqref{eqn: S ub 1}= \E\left[\sum_{l>r} \left(\ov{S_l-S_{l\wedge r}}\right)^k Q_{m,n}(t_1=l)\right]\nonumber\\
&\leq \E\left[\left(\ov{S_l-S_{l\wedge r}}\right)^k\ind_{\{\ov{S_l-S_{l\wedge r}}>l-r\}}\right]\nonumber\\
&\,+\E\left[\sum_{l>r}(l-r)^k Q_{m,n}(t_1=l)\right]\nonumber\\
&\leq \E\left[\sum_{l=1}^
 \infty \left(\ov{S_l}\right)^k\ind_{\{\ov{S_l}>l\}}\right]\nonumber\\
&+ E_{m,n}\left[(t_1-t_1\wedge r)^k\right].\nonumber
\end{align}
The last inequality follows from stationarity.   Since $\ov{S_l}$ is an i.i.d.\ sum of mean zero random variables which have finite moments of all orders, 
\begin{align*}
\E\left[(\ov{S_l})^k \ind_{\{\ov{S_l}>l\}}\right]&\leq \E\left[\left(\ov{S_l}\right)^{2k}\right]^{\frac{1}{2}}\P\left(\ov{S_l}>l\right)^{\frac{1}{2}}\\
&\leq C_k l^{\frac{k}{2}}\E\left[\left(\frac{\ov{S_l}}{l}\right)^k\right]^{\frac{1}{2}}\leq C_k l^{-k}
\end{align*}
which is summable over $l$.  

For equation \eqref{eqn: S ub 2}, a slight modification of \cite[Lemma 4.2]{S} gives 
\[
\eqref{eqn: S ub 2}\leq C(E_{m,n}[t_1-t_1\wedge r]+1).
\]
Taking the $k$-th powers and using Jensen's inequality completes the proof.
\end{proof}

Given a random variable $X$ and $p\in [1,\infty)$, we write
\begin{align*}
    \|X\|_{p,\E}&:=\E[|X|^p]^{\frac{1}{p}},\\
    \|X\|_{p,E_{m,n}}&:=E_{m,n}[|X|^p]^{\frac{1}{p}}
\end{align*}
for the $p$-th norm with respect to the regular expectation $\E$ and the annealed expectation $E_{m,n}$.  When $m,n$ is understood we write $E=E_{m,n}$.  
\begin{lemma}\label{lemma: kappa norm}
For every integer $k\geq 2$ there exists a constant $C_k$ such that whenever $\{X_i\}_{i=1}^k$ are random variables with finite annealed moments, then
\[
\|\kappa^Q_k(X_1,\dots,X_k)\|_{p,\E}\leq C_k \prod_{i=1}^k \|\ov{X_i}\|_{pk,E}
\]
where the centering on the right-hand side is with respect to the annealed measure $E$.

%\[
%\E\left[|\kappa^Q_k(X_1,\dots,X_k)|^b\right]\leq C_k \prod_{i=1}^k %E_{m,n}[|\ov{X_i}|^{bk}]^{\frac{1}{k}}.
%\]
\end{lemma}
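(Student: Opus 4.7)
The plan is to combine the moment-cumulant expansion \eqref{eq: joint cumulant formula 2} (in its quenched form) with two successive applications of the generalized Hölder inequality and a single use of Jensen's inequality. The starting point is the observation that for $k\geq 2$ the joint cumulant $\kappa^Q_k$ is invariant under shifting each argument by a deterministic constant, so
\[
\kappa^Q_k(X_1,\dots,X_k)=\kappa^Q_k(\ov{X_1},\dots,\ov{X_k}),
\]
where $\ov{X_i}=X_i-E_{m,n}[X_i]$. Expanding the right-hand side via \eqref{eq: joint cumulant formula 2} (with $E^Q$ replacing $\E$) and applying the triangle inequality gives a bound of the form
\[
|\kappa^Q_k(X_1,\dots,X_k)|\leq\sum_{\pi\in\mathcal{P}}(|\pi|-1)!\prod_{B\in\pi}\left|E^Q\!\left[\prod_{i\in B}\ov{X_i}\right]\right|.
\]

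Next I would handle each block $B$ by the generalized Hölder inequality in the quenched measure with exponents all equal to $|B|$:
\[
\left|E^Q\!\left[\prod_{i\in B}\ov{X_i}\right]\right|\leq\prod_{i\in B}\bigl(E^Q[|\ov{X_i}|^{|B|}]\bigr)^{1/|B|}.
\]
Taking the product over blocks rewrites everything as a single product over $i\in\{1,\dots,k\}$, with exponent $|B(i)|$ determined by the block containing $i$. I would then take the $\|\cdot\|_{p,\E}$-norm and apply a second generalized Hölder inequality, now in $\E$, using the $k$ exponents $pk$ (which satisfy $\sum_{i=1}^k 1/(pk)=1/p$) to split the product across the $k$ factors.

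Each resulting factor has the form $\E\bigl[(E^Q[|\ov{X_i}|^{|B(i)|}])^{pk/|B(i)|}\bigr]$. Since $|B(i)|\leq k$ and $p\geq 1$, the exponent $pk/|B(i)|\geq 1$, so Jensen's inequality yields
\[
\bigl(E^Q[|\ov{X_i}|^{|B(i)|}]\bigr)^{pk/|B(i)|}\leq E^Q[|\ov{X_i}|^{pk}],
\]
and averaging over $\E$ gives $\|\ov{X_i}\|_{pk,E}^{pk}$. Assembling the pieces yields
\[
\|\kappa^Q_k(X_1,\dots,X_k)\|_{p,\E}\leq C_k\prod_{i=1}^k\|\ov{X_i}\|_{pk,E}
\]
with $C_k=\sum_{\pi\in\mathcal{P}}(|\pi|-1)!$, a combinatorial constant depending only on $k$.

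There is no real obstacle in the argument: each step is a textbook manipulation. The only thing to watch is the bookkeeping — making sure that the block-exponents $|B(i)|$ from the inner Hölder match up cleanly with the uniform outer exponent $pk$ so that Jensen can close the gap in one stroke. The choice $pk$ is precisely what makes $\sum_i 1/(pk)=1/p$ work for the outer Hölder while keeping $pk/|B(i)|\geq 1$ for Jensen, so the three inequalities interlock without any loss other than the combinatorial factor $C_k$.
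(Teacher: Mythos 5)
Your proof is correct and follows essentially the same route as the paper: expand the quenched cumulant over partitions (after re-centering by the annealed means), bound each block by quenched generalized H\"older, unify the exponents to $pk$ via Jensen, and close with an outer generalized H\"older in $\E$. The only difference is cosmetic---you apply Jensen after the outer H\"older to raise the quenched moment exponent from $|B(i)|$ to $pk$, whereas the paper performs the exponent-unification (Jensen plus $L^q$-monotonicity of quenched norms) inside the per-block step before taking the outer H\"older; both orderings interlock the same three inequalities.
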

\begin{proof}
$E_{m,n}\left[X_i\right]$ are constants and therefore

\[
\kappa_k^Q(X_1,\dots,X_k)=\kappa_k^Q(\ov{X_1},\dots,\ov{X_k}).
\]

%\begin{equation}
%|\kappa_k^Q(X_1,\dots,X_k)|\leq (k-1!)\textcolor{red}{\sum_{\pi}\|\kappa_k^Q(\ov{X_1},\dots,\ov{X_k})\|_{p,\E}}.\label{eqn: kappa moment UB}
%\end{equation}
%where $\pi$ runs over all partitions of $\{1,\dots,k\}$. Using Holder's generalized inequality followed by Jensen, 
\[
\left|E^Q\left[\prod_{i\in B}| \ov{X_i} |\right]\right| ^p\leq \prod_{i\in B} E^Q\left[|\ov{X_i}|^{p |B|}\right]^{\frac{1}{|B|}}\leq \prod_{i\in B} E^Q\left[|\ov{X_i}|^{pk}\right]^{\frac{1}{k}}.
\]
Using H\"older's generalized inequality again,
\[
 \E\left[\prod_{i=1}^k E^Q[|\ov{X_i}|^{pk}]^{\frac{1}{k}}\right]\leq \prod_{i=1}^k\left(\E\left[E^Q[|\ov{X_i}|^{pk}\right]\right)^{\frac{1}{k}}
\]
Taking the $p$-th root and plugging this into \eqref{eq: joint cumulant formula 2}  yields the desired result with $C_k=(k-1)! 2^k$.
\end{proof}
The following allows us to control moments of $\sigma_k(t_1\wedge r)$ in terms of annealed moments of the exit time $t_1$. 
\begin{lemma}\label{lemma: sigma norm}
For any $k\in \N$, $p\in [1,\infty)$, there exist positive constants $C(k,p)$ such that the following two conditions hold for all $r,M\in \N$:
\begin{align}
\|\sigma_k(t_1\wedge r)\|_{p,\E}&\leq C(k,p) \left(1+\|(t_1\wedge r)^k\|_{p,E}\right)\label{eqn: sigma norm}\\
\|\sigma_k(t_1)-\sigma_k(t_1\wedge r)\|_{p,\E}&\leq C(k,p) \left(1+\|(t_1)^k\|_{2p,E}\right)\frac{\|(t_1)^M\|_{2pk,E}}{r^M}.\label{eqn: sigma norm diff}
\end{align}
\end{lemma}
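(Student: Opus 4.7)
The plan is to unpack the definition \eqref{def: sigma} of $\sigma_k$ and estimate each of the finitely many quenched joint cumulants $\kappa^Q_j$ appearing there via Lemma \ref{lemma: kappa norm}, which converts $\|\kappa^Q_j(X_1,\ldots,X_j)\|_{p,\E}$ into a product of annealed $L^{pj}$-norms of centered arguments. The arguments are partial sums of the form $\sum_{l=1}^{t_1\wedge r}\tilde\partial^{\ell_i}L^f(a,R^1_{l,0})$, along with their increments up to $t_1$ in the difference \eqref{eqn: sigma norm diff}. Hypothesis \ref{hyp: L moments}, verified in Theorem \ref{theorem: confirming hypothesis L-moments} and tracked through Lemma \ref{lemma: hyp consequence}, guarantees that the summands have finite annealed moments of every order, so Lemma \ref{lemma: sums to t} is available throughout.

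For \eqref{eqn: sigma norm}, I would apply Lemma \ref{lemma: kappa norm} to each $\kappa_j^Q$-term in \eqref{def: sigma} and bound each resulting factor $\|\ov{\sum_{l=1}^{t_1\wedge r}\tilde\partial^{\ell}L^f(a,R^1_{l,0})}\|_{pj,E}$ by $C(1+\|t_1\wedge r\|_{pj,E})$. This follows from Lemma \ref{lemma: sums to t}, whose proof adapts verbatim with $t_1$ replaced by the truncated stopping-time-like quantity $t_1\wedge r$ and cutoff set to $0$. Multiplying the $j$ such factors, using monotonicity of $L^q$-norms to pass from $pj$ to $pk$, and the elementary inequality $x^j\le 1+x^k$ valid for $x\ge 0$ and $1\le j\le k$, packages the bound into $C(1+\|(t_1\wedge r)^k\|_{p,E})$.

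For \eqref{eqn: sigma norm diff}, set $A_i := \sum_{l=1}^{t_1\wedge r}\tilde\partial^{\ell_i}L^f(a,R^1_{l,0})$ and $B_i := \sum_{l=r+1}^{m}\tilde\partial^{\ell_i}L^f(a,R^1_{l,0})\mathbbm{1}_{l\le t_1}$, so each $B_i$ is supported on $\{t_1>r\}$ and the quenched cumulants in $\sigma_k(t_1)$ take arguments $A_i+B_i$ while those in $\sigma_k(t_1\wedge r)$ take arguments $A_i$. By multilinearity,
\[
\kappa^Q_j(A_1+B_1,\ldots,A_j+B_j)-\kappa^Q_j(A_1,\ldots,A_j)=\sum_{i=1}^{j}\kappa^Q_j(A_1+B_1,\ldots,A_{i-1}+B_{i-1},B_i,A_{i+1},\ldots,A_j),
\]
so every term on the right carries exactly one $B_i$-slot. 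Lemma \ref{lemma: kappa norm} then reduces each term to a product of $L^{pj}$-norms: the non-$B$ factors are each bounded by $C(1+\|t_1\|_{pj,E})$ as before, while the single $B_i$-factor carries the $r^{-M}$ decay. To estimate $\|B_i\|_{pj,E}$, I would apply Minkowski summand-by-summand, $\|B_i\|_{pj,E}\le \sum_{l>r}\|\tilde\partial^{\ell_i}L^f(a,R^1_{l,0})\mathbbm{1}_{l\le t_1}\|_{pj,E}$, and bound each individual summand by Cauchy-Schwarz inside the norm followed by Markov's inequality at a sufficiently high exponent to produce a decay $l^{-(M+1)}$; the series $\sum_{l>r}l^{-(M+1)}\le C/r^M$ yields the desired $r^{-M}$. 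Combining with the non-$B$ factors, upgrading all norms to the common exponent $2pk$, and once more invoking $x^j\le 1+x^k$, packages the estimate into \eqref{eqn: sigma norm diff}.

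The main obstacle is obtaining the $r^{-M}$ decay without incurring a prefactor that grows polynomially in $m=O(N^2)$: a naive bound $|B_i|\le\sum_{l=r+1}^{m}|\tilde\partial^{\ell_i}L^f(a,R^1_{l,0})|$ followed by Minkowski on the full sum produces exactly such a growing factor. The workaround is the summand-by-summand Minkowski argument described above, in which the smallness comes from the individual tail probabilities $P(l\le t_1)^{1/2}$ and not from a deterministic upper bound on $|B_i|$. A related but secondary subtlety is that Lemma \ref{lemma: sums to t}'s additive ``$+1$'' constant comes with no factor of $r^{-M}$, so that lemma cannot be invoked as a black box on $\ov{B_i}$; this is avoided by estimating $B_i$ directly and exploiting that $B_i$ is supported on $\{t_1>r\}$.
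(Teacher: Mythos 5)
Your overall skeleton matches the paper's: unpack $\sigma_k$ via \eqref{def: sigma}, bound each quenched cumulant $\kappa_j^Q$ by Lemma \ref{lemma: kappa norm}, use Lemma \ref{lemma: sums to t} and Jensen for \eqref{eqn: sigma norm}, and telescope/multilinearize for \eqref{eqn: sigma norm diff} so that each resulting term carries exactly one difference slot. The one place you diverge is in extracting the $r^{-M}$ decay from that single slot. The paper keeps the block $Y_s=X_s(t_1)-X_s(t_1\wedge r)$ intact, observes it is supported on $\{t_1>r\}$, applies Cauchy--Schwarz once to split off $\ind_{\{t_1>r\}}$, reuses Lemma \ref{lemma: sums to t} on the centered block to control $\|\ov{Y_s}\|_{2pk}$, and applies Markov once to $P_{m,n}(t_1>r)^{1/(2pk)}$. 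You instead apply Minkowski term-by-term to $B_i=\sum_{l>r}\tilde\partial^{\ell_i}L^f(a,R^1_{l,0})\ind_{\{l\le t_1\}}$, then Cauchy--Schwarz and Markov at each index $l$, and sum the tail. Both are valid and give $r^{-M}$ for all $M$; the paper's block approach is slightly cleaner because a single Cauchy--Schwarz lands exactly on the displayed form $(1+\|t_1-t_1\wedge r\|_{2pk,E})\,\|t_1^M\|_{2pk,E}/r^M$, whereas your per-summand estimate yields a $t_1$-moment prefactor with a somewhat different exponent structure (roughly $E[t_1^{M''}]^{1/(2pj)}$ with $M''/(2pj)=M+1$). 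Since the lemma is invoked with $M$ chosen large at will in Lemma \ref{lemma: truncation}, that discrepancy is harmless, but you should note it if you wish to reproduce the lemma statement verbatim. Your discussion of the two pitfalls --- avoiding the $m=O(N^2)$ factor, and not invoking Lemma \ref{lemma: sums to t} as a black box on $\ov{B_i}$ --- correctly identifies the relevant subtleties; the paper sidesteps both precisely because it factors out the global indicator $\ind_{\{t_1>r\}}$ before, rather than after, invoking Lemma \ref{lemma: sums to t}.
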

\begin{proof}
For $l,m\in \N$ define 
\[
X_l(m):=\sum_{i=1}^m\widetilde{\p}^l L^f(a,R^1_{i,0}).
\]

Taking $L_p$ norms of  \eqref{def: sigma}, gives 
\begin{align}
    \|\sigma_k(t_1\wedge r)\|_{p,\E}&\leq \sum_{j=1}^k \sum_{\substack{ \ell_1+\dots \ell_j=k-j\\\ell_i\geq 0}}\|\kappa^Q_j(X_{\ell_1}(t_1\wedge r),\dots, X_{\ell_j}(t_1\wedge r))\|_{p,\E}.\label{eqn: sigma norm 2}\\
     \|\sigma_k(t_1)-\sigma_k(t_1\wedge r)\|_{p,\E}&\leq \sum_{j=1}^k \sum_{\substack{ \ell_1+\dots \ell_j=k-j\\\ell_i\geq 0}}\|\kappa^Q_j(X_{\ell_1}(t_1\wedge r),\dots, X_{\ell_j}(t_1\wedge r))\|_{p,\E}.\label{eqn: sigma norm 3}
\end{align}
Lemma \ref{lemma: kappa norm}, Lemma \ref{lemma: sums to t}, equation \eqref{eqn: sigma norm 2}, and Jensen's inequality give \eqref{eqn: sigma norm}.
By \eqref{eqn: sigma norm 3} and a telescoping argument, to obtain \eqref{eqn: sigma norm diff} it suffices to bound $\|\kappa_j^Q(Y_1,\dots,Y_j)\|_{p,E}$ where
\[
Y_i=\begin{cases}
X_i(t_1\wedge r) & \text{ for } 1\leq i<s \\
X_s(t_1)-X_s(t_1\wedge r) & \text{ for } i=s \\
X_i(t_1) & \text{ for } s<i\leq j 
\end{cases}
\]
and $s\in \{1,\cdots,j\}$ is fixed.  By Lemma \ref{lemma: sums to t} and Jensen's inequality, for $i\neq s$, 

\begin{equation}
\|\ov{Y}_i\|_{pj,\E}\leq \|\ov{Y}_i\|_{pk,\E}\leq C_{2p,k}\left( 1+\|t_1\|_{pk,E}\right)\label{eqn: sigma norm 4}.
\end{equation}
By Jensen's inequality, the Cauchy-Schwarz inequality, Lemma \ref{lemma: sums to t}, and Markov's inequality,
\begin{align*}
\|\ov{Y}_s\|_{pj,\E}&\leq    \|\ov{Y}_s\|_{pj,\E}= \|\ov{X_s(t_1)-X_s(t_1\wedge r)}\|_{pk,\E}\\
&\leq \|\ov{X_s(t_1)-X_s(t_1\wedge r)}\ind_{\{t_1>r\}}\|_{pk,\E}+\E[|X_s(t_1)-X_s(t_1\wedge r)|\ind_{\{t_1> r\}}]\\
& \leq \|\ov{X_s(t_1)-X_s(t_1\wedge r)}\ind_{\{t_1>r\}}\|_{pk,\E}+\E[|\ov{X_s(t_1)-X_s(t_1\wedge r)}|\ind_{\{t_1> r\}}]\\
&\quad +|\mathbb{E}[X_s(t_1)-X_s(t_1\wedge r)]|P_{m,n}(t_1>r)\\
&\leq 2\|\ov{X_s(t_1)-X_s(t_1\wedge r)}\|_{2pk,\E}\|\ind_{\{t_1>r\}}\|_{2pk,\E}+|\mathbb{E}[X_s(t_1)-X_s(t_1\wedge r)]|P_{m,n}(t_1>r) \\
&\leq C_{2p,k}\left(1+\|(t_1-t_1\wedge r)\|_{2pk,E}\right)P_{m,n}(t_1>r)^{\frac{1}{2pk}}\\
&\leq C_{2p,k}\left(1+\|(t_1-t_1\wedge r)\|_{2pk,E}\right)\frac{\|(t_1)^M\|_{2pk,E}}{r^M}.
\end{align*}
In the third to last inequality we again used a slight modification of \cite[Lemma 4.2]{S}.   Another application of Jensen's inequality along with \eqref{eqn: sigma norm 4} gives \eqref{eqn: sigma norm diff}.
\end{proof}

For the following lemma, recall the notation $\P^{(a_1,a_2)}$ and $\E^{(a_1,a_2)}$ defined in Section \ref{sec: four mellin}.

\begin{lemma}
Assume the polymer environment is as in \eqref{polymer environment distribution} and the sequence $(m,n)=(m_N,n_N)_{N=1}^\infty$ satisfies 
\[
|m-N\psi_1^{f^2}(a_2)|\vee|n-N\psi_1^{f^1}(a_1)|\leq \gamma N^{\frac{2}{3}}
\]
where $\gamma$ is some positive constant.  Then there exist finite positive constants $C_1,C_2,C_3,\delta,\delta_1,b$ (uniformly bounded in $(a_1,a_2)$ such that for all $N\in \N$ the following two bounds hold simultaneously for $j=1,2$:  for all $C_1 N^{\frac{2}{3}}\leq u \leq \delta N$,
\[
\P^{(a_1,a_2)}\left[Q_{m,n}(t_j\geq u)\geq e^{-\frac{\delta u^2}{N}}\right]\leq C_2 \left(\frac{N^k}{u^{2k}}\left(\E^{(a_1,a_2)}[\left(\ov{\log Z_{m,n}}\right)^k]+\E^{(a_1(\lambda_j),a_2(\lambda_j))}[\left(\ov{\log Z_{m,n}}\right)^k] \right)\right)
\]
where $a_1(\lambda):=a_1+\lambda$, $a_2(\lambda)=a_2-\lambda$, $\lambda_1=\frac{bu}{N}$, and $\lambda_2=-\frac{bu}{N}$, while for $u\geq\delta N$,
\[
\P^{(a_1,a_2)}\left[Q_{m,n}(t_j\geq u)\geq e^{-\delta_1 u}\right]\leq 2 e^{-C_3}.
\]
\end{lemma}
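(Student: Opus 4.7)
The plan is to carry out the exit-point perturbation of \cite{S}, implemented through the Mellin coupling of Section \ref{sec: coupling}. By the symmetry between horizontal and vertical boundaries it suffices to treat $j=1$ with $\lambda := \lambda_1 = bu/N > 0$, with $b$ to be chosen; $j=2$ is entirely analogous using $\lambda_2 = -bu/N$. The shift $(a_1,a_2) \to (a_1+\lambda, a_2-\lambda)$ preserves the bulk parameter $a_3 = a_1 + a_2$ (Remark \ref{remark parameter DR property}), so only the boundary distributions change, and both pre- and post-shift environments can be realized on a common probability space via the uniform coupling of Section \ref{sec: coupling} applied simultaneously on the horizontal and vertical axes.

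For the moderate range $C_1 N^{2/3} \leq u \leq \delta N$, the analogue of \eqref{eqn: Z deriv} for simultaneous shifts in both boundary parameters yields
\[
\frac{\p}{\p \lambda}\log Z_{m,n}(a_1+\lambda, a_2-\lambda) = \widetilde{E}^{a_1+\lambda,a_2-\lambda}_{m,n}\Bigl[\sum_{i=1}^{t_1} L^{f^1}(a_1+\lambda, R^1_{i,0})\Bigr] - \widetilde{E}^{a_1+\lambda,a_2-\lambda}_{m,n}\Bigl[\sum_{j=1}^{t_2} L^{f^2}(a_2-\lambda, R^2_{0,j})\Bigr].
\]
Integrating in $\lambda$ and using the strict positivity of $L^{f^1}$ from \eqref{eqn: Lf is positive}, together with a truncation on the event that $L^{f^1}(a_1,\cdot)$ is not too small, gives a lower bound of the form
\[
\log Z_{m,n}^{(a_1+\lambda, a_2-\lambda)} - \log Z_{m,n}^{(a_1, a_2)} \geq c\, \lambda\, u\, Q_{m,n}^{(a_1,a_2)}(t_1 \geq u) - (\text{error})
\]
valid outside an event of small probability controlled by the moment bounds of Lemma \ref{lemma: hyp consequence}. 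Second, by \eqref{eqn: NSEW} and the down-right property, the mean shift is exactly
\[
\E^{(a_1+\lambda, a_2-\lambda)}[\log Z_{m,n}] - \E^{(a_1,a_2)}[\log Z_{m,n}] = m\bigl(\psi_0^{f^1}(a_1+\lambda) - \psi_0^{f^1}(a_1)\bigr) + n\bigl(\psi_0^{f^2}(a_2-\lambda) - \psi_0^{f^2}(a_2)\bigr),
\]
and a second-order Taylor expansion in $\lambda$, combined with the characteristic-direction hypothesis \eqref{direction} (which is precisely the condition that cancels the first-order term up to a $\gamma N^{2/3}\lambda$ remainder), places this difference below $-c' \lambda^2 N = -c'' u^2/N$ once $b$ is fixed and $C_1$ is taken large enough. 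These two facts combined imply that the event $\{Q_{m,n}^{(a_1,a_2)}(t_1 \geq u) \geq e^{-\delta u^2/N}\}$ forces at least one of the centered variables $\overline{\log Z_{m,n}^{(a_1, a_2)}}$ or $\overline{\log Z_{m,n}^{(a_1+\lambda, a_2-\lambda)}}$ to exceed $c''' u^2/N$ in absolute value. Markov's inequality applied at the $k$-th moment under each of the two laws then yields the claimed bound of order $C N^k/u^{2k}$ times the sum of the two $k$-th central moments.

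For the regime $u \geq \delta N$, a direct argument suffices. Here $t_1 \geq u$ forces the path to spend a positive fraction of its length on the horizontal boundary, so $\log Z_{m,n}$ on this event is bounded below by $S_m = \sum_{i=1}^m \log R^1_{i,0}$ plus an i.i.d.\ sum corresponding to the remaining bulk weights needed to reach $(m,n)$. Since all these summands have finite exponential moments (Hypothesis \ref{hyp: L moments} and Remark \ref{remark - Mellin consequences}), standard Cram\'er-type large deviations give the linear-rate bound $e^{-\delta_1 u}$ outside an exceptional event of probability at most $2e^{-C_3}$. The main obstacle in the moderate regime is making the lower bound $\sum_{i=1}^{t_1} L^{f^1}(a_1, R^1_{i,0}) \gtrsim u$ on $\{t_1 \geq u\}$ rigorous: because $L^{f^1}$ is strictly positive but need not be bounded below by a constant, one must truncate on the rare atypical values of $R^1$, using Lemma \ref{lemma: hyp consequence} to absorb the resulting error into the multiplicative constant.
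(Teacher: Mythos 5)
Your overall structure — perturbing the boundary parameter along the characteristic direction (Remark \ref{remark parameter DR property}), using the Mellin coupling of Section \ref{sec: coupling}, reducing to a deviation of $\ov{\log Z^{\text{shifted}} - \log Z}$, and then applying Markov at the $k$-th moment — is exactly what the paper does (which simply follows Proposition 4.3 of \cite{CN2018} verbatim up to the displayed probability bound and then substitutes the $k$-th moment for the second). The $u\ge \delta N$ regime via Cram\'er-type large deviations is also the right idea.

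However, there is a genuine gap in the key pathwise lower bound. You wrote
\[
\log Z_{m,n}^{(a_1+\lambda,a_2-\lambda)} - \log Z_{m,n}^{(a_1,a_2)} \geq c\,\lambda\,u\,Q_{m,n}^{(a_1,a_2)}(t_1\geq u) - (\text{error}),
\]
which is multiplicative in the quenched probability. This is what the derivative formula naturally produces (restrict $\tilde{E}^{\lambda'}_{m,n}[\sum_{i\le t_1}L^{f^1}]$ to $\{t_1\ge u\}$), but it is far too weak: on the event $\{Q_{m,n}(t_1\ge u)\ge e^{-\delta u^2/N}\}$ the factor $Q_{m,n}(t_1\geq u)$ is permitted to be as small as $e^{-\delta u^2/N}$, which for $u\asymp N^{2/3}$ is $e^{-\delta N^{1/3}}$. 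Thus $c\lambda u\, Q(t_1\geq u)\lesssim cb\,\tfrac{u^2}{N}e^{-\delta u^2/N}\to 0$, whereas the deduction requires the centered difference to exceed $c'''u^2/N\asymp N^{1/3}\to\infty$. The correct intermediate inequality — and what Proposition 4.3 of \cite{CN2018}, following Sepp\"al\"ainen, actually uses — comes from the Radon--Nikodym/weight-ratio argument: restricting the sum defining $Z^{\text{shifted}}$ to paths with $t_1\ge u$, and using that the per-edge weight ratio $\tilde R^1_{i,0}(\lambda)/\tilde R^1_{i,0}(0)\ge 1$, one obtains the \emph{additive} lower bound
\[
\log Z_{m,n}^{(a_1+\lambda,a_2-\lambda)} - \log Z_{m,n}^{(a_1,a_2)} \geq \sum_{i=1}^{u}\log\frac{\tilde R^1_{i,0}(\lambda)}{\tilde R^1_{i,0}(0)} + \log Q_{m,n}^{(a_1,a_2)}(t_1\geq u).
\]
On the event, $\log Q_{m,n}(t_1\ge u)\ge -\delta u^2/N$, the random sum concentrates around $c\lambda u= cb\,u^2/N$, and combined with the mean shift $\E^{(a_1+\lambda,a_2-\lambda)}[\log Z]-\E^{(a_1,a_2)}[\log Z]\lesssim -c'u^2/N$ one gets $\ov{\log Z^{\text{shifted}}-\log Z}\ge C'''u^2/N$ for $b,\delta,C_1$ chosen appropriately. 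This also sidesteps your other difficulty: in the additive/weight-ratio version one does not need to control $\tilde{E}^{\lambda'}_{m,n}[\sum_{j\le t_2}L^{f^2}]$ (which the derivative approach leaves as an uncontrolled subtracted term of order $\lambda N^{2/3}$), and the truncation issue you flag for $L^{f^1}$ becomes a routine concentration statement for the i.i.d.\ sum $\sum_{i=1}^u\log\frac{\tilde R^1_{i,0}(\lambda)}{\tilde R^1_{i,0}(0)}$ rather than a lower bound on quenched expectations.
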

\begin{proof}
Follow the proof of Proposition 4.3 in \cite{CN2018} verbatim up to the displayed inequality
\[
(4.8)\leq \widehat{\P}\left[\ov{\log Z_{m,n}(a_1(\lambda_j),a_2(\lambda_j))-\log Z_{m,n}(a_1,a_2)}\geq C''' \frac{u^2}{N}\right].
\]
Now rather than bounding by the second moment, bound by the $k$-th moment to get 
\begin{align*}
(4.8)&\leq \left(\frac{N}{C'''u^2}\right)^k\widehat{\E}\left[\left(\ov{\log Z_{m,n}(a_1(\lambda_j),a_2(\lambda_j))-\log Z_{m,n}(a_1,a_2)}\right)^k\right].\\
&\leq\frac{C_2 N^k}{u^{2k}}\left(\E^{(a_1,a_2)}[\left(\ov{\log Z_{m,n}}\right)^k]+\E^{(a_1(\lambda_j),a_2(\lambda_j))}[\left(\ov{\log Z_{m,n}}\right)^k] \right).
\end{align*}
The proof of the second part is just as in Proposition 4.3 of \cite{CN2018}.
\end{proof}
\begin{corollary}\label{cor: t_1 moment}
Suppose there exist $\delta,\epsilon_0,C>0$ such that $[a_1-\epsilon_0,a_1+\epsilon_0]\times [a_2-\epsilon_0,a_2+\epsilon_0]\subset D(M_{f_1})\times D(M_{f_2})$ and the following holds for every $N\in\N$ and every $\lambda \in [-\epsilon_0,\epsilon_0]$:
\begin{equation}
\E^{(a_1(\lambda),a_2(\lambda))}\left[\left(\ov{\log Z_{m,n}}\right)^k\right]\leq C N^{(\frac{1}{3})k+\delta k}\label{eqn: assumption - moments}
\end{equation}
where $a_1(\lambda)=a_1-\lambda$, and $a_2(\lambda)=a_2+\lambda$.
Then, for all $\epsilon>0$ there exists a positive constant $C'=C'(\epsilon,k,a_1,a_2)$ such that the following bound holds for every $N\in \N$ and every $\lambda\in [-\frac{\epsilon_0}{2},\frac{\epsilon_0}{2}]$:
\[
E^{(a_1(\lambda),a_2(\lambda))}\left[(t_j)^{2k}\right]\leq C' N^{(\frac{4}{3})k+\delta k+\epsilon} \text{ for both } j=1,2.
\]
Here $E^{(a_1,a_2)}$ denotes the annealed  expectations with respect to the measure on paths in the environment \eqref{polymer environment distribution}.
\end{corollary}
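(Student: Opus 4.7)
The natural approach is to integrate the tail bound from the preceding lemma against $u^{2k-1}$ via
\[
E^{(a_1(\lambda),a_2(\lambda))}[(t_j)^{2k}] = 2k \int_0^\infty u^{2k-1} P^{(a_1(\lambda),a_2(\lambda))}_{m,n}(t_j \geq u) \, du.
\]
Write $\delta_0, b, \delta_1, C_1, C_2, C_3$ for the constants furnished by the preceding lemma (renaming its $\delta$ to $\delta_0$ to avoid conflict with the hypothesis). Set $\eta := \tfrac{\delta}{2} + \tfrac{\epsilon}{6k}$ and $T := N^{2/3+\eta}$, and split the integration into the three intervals $[0, T]$, $[T, \delta_0 N]$, and $[\delta_0 N, \infty)$. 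By shrinking $\delta_0$ if necessary we may assume $b\delta_0 \leq \epsilon_0/2$, so that the perturbation $\lambda_j = \pm bu/N$ from the tail bound combined with $|\lambda|\leq \epsilon_0/2$ keeps the shifted parameters inside the region where hypothesis \eqref{eqn: assumption - moments} applies.

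On $[0,T]$, the trivial bound $P \leq 1$ contributes at most $T^{2k} = N^{(4/3+2\eta)k} \leq C\, N^{(4/3)k + \delta k + \epsilon/3}$ by the choice of $\eta$. On $[T, \delta_0 N]$, use $Q_{m,n} \leq 1$ to write
\[
P^{(a_1(\lambda),a_2(\lambda))}_{m,n}(t_j \geq u) \leq e^{-\delta_0 u^2/N} + \P^{(a_1(\lambda),a_2(\lambda))}\!\!\left[Q_{m,n}(t_j \geq u) \geq e^{-\delta_0 u^2/N}\right],
\]
and apply the preceding lemma with a free integer index $K > k$. The first summand integrated against $u^{2k-1}$ is super-polynomially small because $u^2/N \geq N^{1/3+2\eta}$ throughout this range. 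For the second summand, hypothesis \eqref{eqn: assumption - moments} at index $K$ (applied both at $(a_1(\lambda),a_2(\lambda))$ and at $(a_1(\lambda)+\lambda_j, a_2(\lambda)-\lambda_j)$) gives a bound $C_2 N^K u^{-2K}\cdot 2C N^{K/3+\delta K}$, and integrating,
\[
C' N^{K(4/3+\delta)} \int_T^{\delta_0 N} u^{2k-1-2K}\, du \leq C'' \, N^{K(\delta - 2\eta) + (4/3 + 2\eta)k}.
\]
Since $\delta - 2\eta = -\tfrac{\epsilon}{3k} < 0$, any choice of $K > k$ suffices to make this $\leq C''' N^{(4/3+2\eta)k} \leq C''' N^{(4/3)k + \delta k + \epsilon/3}$. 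Finally, on $[\delta_0 N, \infty)$, the exponential tail bound from the preceding lemma combined with the deterministic truncation $t_j \leq m + n = O(N)$ gives a negligible contribution.

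The main technical point is the joint calibration of the cutoff exponent $\eta$ and the integer $K$: we need $2\eta > \delta$ so that the intermediate-range integral is summable for some $K > k$, while $2\eta k$ must simultaneously remain below $\delta k + \epsilon/3$ so the short-range contribution fits. The choice $\eta := \tfrac{\delta}{2} + \tfrac{\epsilon}{6k}$ accomplishes both, and summing the three contributions yields $E^{(a_1(\lambda),a_2(\lambda))}[(t_j)^{2k}] \leq C'\, N^{(4/3)k + \delta k + \epsilon}$, as required. The same argument goes through for $j = 2$ by the symmetric version of the preceding lemma.
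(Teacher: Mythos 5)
Your approach is essentially the same as the paper's: express $E^{(a_1(\lambda),a_2(\lambda))}[(t_j)^{2k}]$ as $2k\int_0^\infty u^{2k-1}P(t_j\geq u)\,\mathrm{d}u$, split the integral at a cutoff $\sim N^{2/3}$ and at $\delta_0 N$, and control the middle range with the preceding lemma. Your explicit unpacking of $P(t_j\geq u)\le e^{-\delta_0 u^2/N}+\P[Q_{m,n}(t_j\geq u)\ge e^{-\delta_0 u^2/N}]$ is the right step and is implicit in the paper.

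However, there is a gap in how you treat the intermediate range $[T,\delta_0 N]$. You apply the preceding lemma at a free index $K>k$ and then invoke \eqref{eqn: assumption - moments} ``at index $K$,'' but the corollary's hypothesis only gives the moment bound for the fixed exponent $k$ appearing in its statement, not for all larger exponents. Moment bounds are not monotone in the exponent (Lyapunov's inequality goes the wrong way), so $\E[(\ov{\log Z_{m,n}})^{K}]\le C N^{(\frac{1}{3}+\delta)K}$ does not follow from the assumed $k$-th moment bound. As written, the middle-range estimate therefore relies on an assumption that has not been made.

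The fix is minor and brings you back to the paper's computation. Take $K=k$ (so only the hypothesized moment bound is used): the integrand in the middle range then has the form $u^{2k-1}\cdot C N^{(\frac{4}{3}+\delta)k}u^{-2k}=C N^{(\frac{4}{3}+\delta)k}u^{-1}$, and $\int_{T}^{\delta_0 N}u^{-1}\,\mathrm{d}u$ contributes only a $\log N$ factor, which is absorbed into the $N^{\epsilon}$ slack in the conclusion. The paper avoids even the logarithm by writing $u^{2k-1}\le(\delta_0 N)^{\epsilon}u^{2k-1-\epsilon}$ so the integral becomes $\int u^{-1-\epsilon}\,\mathrm{d}u<\infty$; either route works, and with that one change your proof matches the paper's. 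The rest of your argument (the near-origin trivial bound $T^{2k}$, the calibration keeping the perturbed parameters inside $[a_1-\epsilon_0,a_1+\epsilon_0]\times[a_2-\epsilon_0,a_2+\epsilon_0]$, the deterministic truncation $t_j\le m\vee n=O(N)$ on the far range) is correct.
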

\begin{proof}
Fix $\lambda_0\in [\frac{-\epsilon}{2},\frac{\epsilon}{2}]$ and put $(\tilde{a}_1,\tilde{a}_2)=(a_1(\lambda_0),a_2(\lambda_0))\in D(M_{f^1})\times D(M_{f^2})$.  Note that $\tilde{a}_1+\tilde{a}_2=a_1+a_2=a_3$ (see \eqref{polymer environment distribution}).  So by Remark \ref{remark parameter DR property} and \eqref{eqn: assumption - moments}, there exist positive constants $N_0=N_0(b,\epsilon_0)\in \N$, $C_1=C_1(\epsilon_0),\delta=\delta(\epsilon_0)$ such that for all $N\geq N_0$,
\begin{align*}
E^{(\tilde{a}_1,\tilde{a}_2)}\left[(t_j)^{2k}\right]&\leq (C_1 N^{\frac{2}{3}})^{2k} + (2k)(\epsilon N)^{\epsilon}\int_{C_1 \wedge N^{\frac{2}{3}}}^{\delta N}u^{2k-1-\epsilon} P^{(\tilde{a}_1,\tilde{a}_2)}(t_j\geq u)\mathrm{d}u+C'(\delta,\delta_1,C_3,N_0)\\
&\leq \left(C_1 N^{\frac{2}{3}}\right)^{2k} + (2k)(\delta N)^{\epsilon} C C_2 N^{(\frac{4}{3})k +\delta k} \int_{C_1 N^{\frac{2}{3}}}^{\delta N}u^{-1-\epsilon}\mathrm{d} u\\
&\leq C(\epsilon, k,\epsilon_0) N^{(\frac{4}{3}k+\delta k+\epsilon)}.
\end{align*}
Note: We needed $\epsilon$  to be large enough such that $b\delta\leq \frac{\epsilon_0}{2}$ to ensure that for all $N\in \N$ and all $u\leq \delta N$,
$\left(\tilde{a}_1\pm \frac{bu}{N}, \tilde{a}_2 \mp \frac{bu}{N}\right)\in \left\{[a_1(\lambda),a_2(\lambda)]: \lambda\in [-\epsilon_0,\epsilon_0]\right\}$.
\end{proof}
\begin{lemma}\label{lemma: truncation}
Assume the polymer environment is distributed as in \eqref{polymer environment distribution} and the sequence $(m,n)=(m_N,n_N)_{N=1}^\infty$ satisfies 
\[
|m-N\psi_1^{f^2}(a_2)|\vee|n-N\psi_1^{f^1}(a_1)|\leq \gamma N^{\frac{2}{3}}
\]
where $\gamma$ is some positive constant.  Further, suppose there exist positive constants $\delta,\epsilon_0,\{C_k\}_{k=1}^\infty$ such that $[a_1-\epsilon_0,a_1+\epsilon_0]\times [a_2-\epsilon_0,a_2+\epsilon_0]\subset D(M_{f^1})\times D(M_{f^2})$ and the following hold for every $k,N\in \N$ and every $\lambda\in [-\epsilon_0,\epsilon_0]$:

\begin{equation}
\E^{(a_1(\lambda),a_2(\lambda))}\left[\left(\ov{\log Z_{m,n}}\right)^k\right]\leq C_k N^{(\frac{1}{3}+\delta)k}\label{eqn: assumption 2}.     
\end{equation}

Then for all $\epsilon>0$, $M>0$, there exist positive constants $\{C_{j,l}=C_{j,l}(a_1,a_2,\epsilon,\delta,M)\}_{j,l=1}^\infty$  (locally bounded 
in $a_1,a_2$) such that for all $N\in \N$ we have the following:
%Then, for all $j,n\in \N$ there exist constants $C_{j,n}=C_{j,n}(a_1,a_2,\epsilon,\delta)$ (locally bounded in $(a_1,a_2)$) such that 
\[
\left|\E[\left(\overline{\log Z_{m,n}}\right)^j p_l(S_m,a_1;m)]-\E[\left(\overline{\log Z_{m,n}}\right)^j p_l(S_{\floor{\tau}},a_1;\floor{\tau})]\right|\leq C_{j,l} N^{-M},
\]
where $S_r=\sum_{i=1}^r \log R_{i,0}^1$, and $\tau=N^{(\frac{2}{3}+\frac{\delta}{2}+\epsilon)}$.
\end{lemma}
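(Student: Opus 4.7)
The plan is to expand both sides of the difference using the integration-by-parts formula \eqref{eqn: ibp}. For $r=m$ one has $t_1\le m$, so $t_1\wedge m=t_1$; for $r=\lfloor \tau\rfloor$ the right-hand side of \eqref{eqn: ibp} instead involves $\sigma_{\ell_i}(t_1\wedge\lfloor\tau\rfloor)$. Subtracting the two identities reduces the quantity to bound to a finite sum, indexed by multi-indices $(\ell_1,\ldots,\ell_j)$ with $\ell_1+\cdots+\ell_j=l$, of differences
\[
\E\!\left[\prod_{i=1}^j \sigma_{\ell_i}(t_1)\right]-\E\!\left[\prod_{i=1}^j \sigma_{\ell_i}(t_1\wedge\lfloor\tau\rfloor)\right].
\]

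Next I would telescope each such product across the $j$ factors, writing
\begin{align*}
\prod_{i=1}^j \sigma_{\ell_i}(t_1)-\prod_{i=1}^j \sigma_{\ell_i}(t_1\wedge\lfloor\tau\rfloor)
=\sum_{s=1}^j\Biggl(\prod_{i<s}\sigma_{\ell_i}(t_1\wedge\lfloor\tau\rfloor)\Biggr)\bigl(\sigma_{\ell_s}(t_1)-\sigma_{\ell_s}(t_1\wedge\lfloor\tau\rfloor)\bigr)\Biggl(\prod_{i>s}\sigma_{\ell_i}(t_1)\Biggr),
\end{align*}
and apply the generalized Hölder inequality with exponent $j$. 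The non-swapped factors are controlled by \eqref{eqn: sigma norm} in Lemma \ref{lemma: sigma norm}, giving $\|\sigma_{\ell_i}(t_1\wedge r)\|_{j,\E}\le C(1+\|(t_1)^{\ell_i}\|_{j,E})$, while the swapped factor carries the key smallness from \eqref{eqn: sigma norm diff}:
\[
\|\sigma_{\ell_s}(t_1)-\sigma_{\ell_s}(t_1\wedge\lfloor\tau\rfloor)\|_{j,\E}\le C\bigl(1+\|(t_1)^{\ell_s}\|_{2j,E}\bigr)\,\frac{\|(t_1)^{M'}\|_{2j\ell_s,E}}{\lfloor\tau\rfloor^{M'}}.
\]

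Finally, the assumed centered moment bound \eqref{eqn: assumption 2} feeds into Corollary \ref{cor: t_1 moment}, yielding $E^{(a_1,a_2)}[(t_1)^{2k}]\le C(k,\epsilon')N^{(4/3+\delta)k+\epsilon'}$ for every $\epsilon'>0$ and every $k$. Therefore $\|(t_1)^{M'}\|_{p,E}\lesssim N^{M'(2/3+\delta/2)+\epsilon'/p}$, and since $\tau=N^{2/3+\delta/2+\epsilon}$ the crucial ratio obeys
\[
\frac{\|(t_1)^{M'}\|_{2j\ell_s,E}}{\lfloor\tau\rfloor^{M'}}\lesssim N^{-M'\epsilon+\epsilon'/(2j\ell_s)}.
\]
The remaining Hölder factors grow only polynomially in $N$, with an exponent bounded independently of $M'$, so taking $M'$ large enough (after fixing $j,l,M$) and $\epsilon'$ small enough absorbs all polynomial losses and produces the desired $C_{j,l}N^{-M}$ bound. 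The main obstacle is purely bookkeeping: $M'$ must simultaneously dominate the target exponent $M$ and the polynomial growth of the non-swapped factors, but because Corollary \ref{cor: t_1 moment} grants moments of $t_1$ of every order with arbitrarily small loss, this is achieved by a single choice of $M'$ after the other parameters are fixed.
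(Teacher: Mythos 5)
Your proposal takes essentially the same route as the paper's proof: expand both sides via the integration-by-parts identity \eqref{eqn: ibp}, telescope the product of $\sigma_{\ell_i}$'s across the $j$ factors, apply generalized H\"older, control the swapped factor through \eqref{eqn: sigma norm diff} and the others through \eqref{eqn: sigma norm}, feed in the $t_1$-moment bound from Corollary \ref{cor: t_1 moment}, and take the Markov exponent $M'$ large. The only substantive difference is the choice of H\"older exponents: you use the uniform exponent $j$ across all factors, whereas the paper isolates the swapped factor in $L^2$ and puts the remaining $j-1$ factors in $L^{2(j-1)}$; both are valid.

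One small gap: you invoke \eqref{eqn: sigma norm} for every non-swapped factor, but that bound is stated only for $k\in\N$ (i.e.\ $k\ge 1$), and $\sigma_0(t_1\wedge r)=\ov{\log Z_{m,n}}$ is \emph{not} $O(1)$ — it is of order $N^{1/3+\delta}$. Applying \eqref{eqn: sigma norm} literally with $\ell_i=0$ would yield the false bound $\|\sigma_0\|_{j,\E}\le C(1+\|1\|)=O(1)$. You should separate the index set $I_0=\{i:\ell_i=0\}$, use the assumed centered-moment bound \eqref{eqn: assumption 2} for those factors, and use Lemma \ref{lemma: sigma norm} only for $\ell_i\ge 1$, as the paper does. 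This also simplifies the telescoping: when $\ell_s=0$ the swapped difference $\sigma_0(t_1)-\sigma_0(t_1\wedge\lfloor\tau\rfloor)$ vanishes identically since $\sigma_0$ has no $r$-dependence, so the telescoping sum may be restricted to indices $s$ with $\ell_s\ne 0$. With this adjustment your argument matches the paper.
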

\begin{proof}
By \eqref{eqn: ibp}, 
for all $0\leq r\leq m$,
\[
\E\left[\left(\overline{\log Z_{m,n}}\right)^j p_k(S_r,a_1;r)\right]= \sum_{\substack{\ell_1+\dots+\ell_j=k\\\ell_i\geq 0}} \frac{k!}{l_1!\dots l_j!}\E\left[\prod_{i=1}^j \sigma_{\ell_i}(t_1\wedge r)\right]
\]
where $\sigma_0(t_1\wedge r)=\ov{\log Z_{m,n}}$.
It will therefore suffice to compare $\sigma_{\ell_i}(t_1)$ with $\sigma_{\ell_i}(t_1\wedge r)$.
Specifically, for fixed $\ell_1,\dots, \ell_j$, such that $\sum_{i=1}^j\ell_i=k$, we wish to estimate
\[
\E\left[\prod_{i=1}^j\sigma_{\ell_i}(t_1)-\prod_{i=1}^j\sigma_{\ell_i}(t_1\wedge r)\right].
\]

By a telescoping argument it suffices to bound
\[
\E\left[\sigma_{\ell_a}(t_1)\prod_{i\in I_1} \sigma_{\ell_i}(t_1)\prod_{i\in I_2} \sigma(t_1\wedge r)\right]-\E\left[\sigma_{\ell_a}(t_1\wedge r)\prod_{i\in I_1} \sigma_{\ell_i}(t_1)\prod_{i\in I_2} \sigma(t_1\wedge r)\right]
\]
where $a\in\{1,2,\dots,j\}$ is such that $\ell_a\neq 0$,  $I_1=\{1,\dots,a-1\}$,  $I_2=\{a+1,\dots,j\}$, and $\sum_{i=1}^j \ell_i=k$.
By the generalized H\"older inequality this is bounded by 
\begin{equation}
\|\sigma_{\ell_a}(t_1)-\sigma_{\ell_a}(t_1\wedge r)\|_{2,\E} \prod_{i\in I_1}\|\sigma_{\ell_i}(t_1)\|_{2(j-1),\E}   \prod_{i\in I_2}\|\sigma_{\ell_i}(t_1\wedge r)\|_{2(j-1),\E}   \label{eqn: telescoping}
\end{equation}
Let $I_0=\{1\leq i \leq j: \ell_i=0\}$.  By Lemma \ref{lemma: sigma norm}, for any $r,M\in \N$,  
\begin{equation}
\eqref{eqn: telescoping}\leq C(\ell_a,2)\left(1+\|(t_1)^{\ell_a}\|_{4,E}\right)\frac{\|(t_1)^M\|_{4\ell_a,E}}{r^M}\|\ov{\log Z_{m,n}}\|^{|I_0|}_{2(j-1),\E} \prod_{i\notin I_0} C(\ell_i,2(j-1))\left(1+\|t_1^{\ell_i}\|_{2(j-1),E}\right).\label{eqn: truncation 1}
\end{equation}
Using the assumption \eqref{eqn: assumption 2}, by Corollary \ref{cor: t_1 moment}, for any $\epsilon>0$ there exists a constant $C(\epsilon,p)\geq 1$ (uniformly bounded in $(a_1,a_2)$ such that
\begin{align*}
\|\ov{\log Z_{m,n}}\|_{p,\E}&\leq C(p,p) N^{(\frac{1}{3}+\delta)}\quad\quad \text{ and }\\ 
\|(t_1)^{\ell}\|_{p,E}&\leq C(\ell,p) N^{(\frac{2}{3}+\frac{\delta}{2})\ell+\frac{\epsilon}{p}}.
\end{align*}
This implies the existence of positive constants $C'=C'(k,j,M)$ such that for all $M\in \N$ and all $N\in\N$, 
\begin{equation*}
    \eqref{eqn: truncation 1}\leq C' N^{(\frac{2}{3}+\frac{\delta}{2})\ell_a+\frac{\epsilon}{4}}\cdot N^{(\frac{2}{3}+\frac{\delta}{2})M+\frac{\epsilon}{4\ell_a}}r^{-M}
    \cdot N^{(\frac{1}{3}+\delta)|I_0|}\cdot\prod_{i\notin I_0} N^{\left(\frac{2}{3}+\frac{\delta}{2}\right)\ell_i+\frac{\epsilon}{2(j-1)}}
    \end{equation*}
    Choosing 
    \[r=\floor{\tau}\ge N^{\frac{2}{3}+\frac{\delta}{2}+\epsilon}-1,\]
    we obtain the bound
    \[C' N^{(\frac{1}{3}+\delta)(j-1)+(\frac{2}{3}+\frac{\delta}{2})k +2\epsilon -M\epsilon}.\]
Now fix $M_0=M_0(\epsilon,\delta,j,k)$ large enough such that such that 
\[(\frac{1}{3}+\delta)(j-1)+(\frac{2}{3}+\frac{\delta}{2})k +2\epsilon -M_0\epsilon\leq -K.\]
\end{proof}

% \begin{corollary}\label{cor: truncation}
% With the same assumptions as in Lemma \ref{lemma: truncation}, for all $\epsilon>0$, $K>0$, and all $1\leq j\leq k$, there exist constants $C_{j,k}=C_{j,k}(\epsilon,K,a_0,a_1)$ (uniformly bounded in $a_0,a_1$) such that 
% \[
% |\kappa(\underbrace{\log Z_{m,n},\dots,\log Z_{m,n}}_{j \text{ times}},\underbrace{S,\dots,S}_{k-j \text{ times}})-\kappa(\underbrace{\log Z_{m,n},\dots,\log Z_{m,n}}_{j \text{ times}},\underbrace{S_{\floor{\tau}},\dots,S_{\floor{\tau}}}_{k-j \text{ times}})|\leq C_{j,k}N^{-K}.
% \]
% \end{corollary}
% \begin{proof}
% By Lemma \ref{lemma: joint cumulants to polynomials}, both of these cumulants are expressible as sums of products of expectations of the form $ \E\left[\left(\ov{\log Z_{m,n}}\right)^j p_k(S,a_1)\right]$.  By a telescoping argument we need only analyze expressions of the form 
% \[
%     \E[(\ov{\log Z_{m,n}})^{a_{B',l}}(p_{b_{B',l}}(S,a_1)-p_{b_{B',l}}(S_{\floor{\tau}},a_1))]\prod_{B\in \pi, B\neq B'}\E[\left(\ov{\log Z_{m,n}}\right)^{a_{B,j}}p_{b_{B,j}}(S,a_1)],
% \]
% where $a_{B,j}, b_{B,j}\ge 0$ and $a_{B,j}+b_{B,j}=|B|$, the size of block $B$ in the partition appearing in the expression for the cumulant $\kappa_k(\log Z_{m,n})$ in Corollary \ref{Cor: exact formula for cumulant of free energy}.

% The desired result now follows using assumption \eqref{eqn: assumption - moments}, Lemma \ref{lemma: truncation}, the fact that that $S$ and $S_r$ are both i.i.d. sums of random variables with finite exponential moments.
% \end{proof}

Before proceeding to Lemma \ref{improved central moment}, we note the following property of the polynomials $p_n(T,a;r)$ introduced in \eqref{eqn: recursion 2}:
\begin{proposition}\label{prop: poly-bound}
For each $n$, 
\[p_n(t,a;r)=\sum_{j} c_j(a)(t-r\psi_0(a))^{a_j}r^{b_j},\]
where $c_j(a)$ are independent of $r$ and $0\le a_j,b_j\le n$ are integers with
\begin{equation}
\label{eqn: degree-p}
    \frac{a_j}{2}+b_j=\frac{n}{2}.
\end{equation}
In particular, if $T=\sum_{k=1}^r \log X_k$ where $X_k\sim m_f(a)$, then we have for integers $b, k\ge 0$
\begin{equation}\label{eqn: poly-bound}
    \mathbb{E}[|p_b(T,a;r)|^k]\le C_{b,k}r^{kb/2}.
\end{equation}
\begin{proof}
The result is clearly true for $p_0(T,a;r)$. Next, we note that if $a_j, b_j$ satisfy \eqref{eqn: degree-p}, then
\begin{align*}
&\frac{\partial}{\partial_a} (t-r\psi_0(a))^{a_j}r^{b_j} + (t-r\psi_0(a))^{a_j}r^{b_j}\cdot (t-r\psi_0(r))\\
=&~- a_j \psi_1(a)(t-r\psi_0(a))^{a_j-1}r^{b_j+1}+(t-r\psi_0(a))^{a_j+1}r^{b_j}.
\end{align*}
Noting that 
\[\frac{a_j-1}{2}+b_j+1=\frac{a_j+1}{2}+b_j= \frac{n+1}{2},\]
the claim follows by induction from the definition \eqref{eqn: recursion 2}.
\end{proof}
\end{proposition}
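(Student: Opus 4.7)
The plan is to prove the structural decomposition by induction on $n$, then derive the moment bound \eqref{eqn: poly-bound} from standard estimates for sums of i.i.d.\ centered random variables.

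For the decomposition, I would induct on $n$. The base case $p_0 \equiv 1$ is immediate. For the inductive step, assuming $p_{n-1}$ admits such an expansion, I would apply the recursion \eqref{eqn: recursion 2} term-by-term to each monomial $c_j(a)(t-r\psi_0(a))^{a_j}r^{b_j}$. Multiplication by $(t-r\psi_0(a))$ shifts $(a_j,b_j)\mapsto(a_j+1,b_j)$; and $\p_a$ applied to $(t-r\psi_0(a))^{a_j}$ produces $-a_j\psi_1(a)(t-r\psi_0(a))^{a_j-1}\cdot r$, which combines with $r^{b_j}$ to shift $(a_j,b_j)\mapsto(a_j-1,b_j+1)$. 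Both operations preserve the weighted degree $a_j/2+b_j = n/2$. The third contribution, $\p_a c_j(a)$, only modifies the coefficient and leaves the monomial unchanged, so it yields terms of weighted degree $(n-1)/2 < n/2$. I would therefore phrase the invariant as $a_j/2+b_j \le n/2$ rather than strict equality; a small check at $n=3$ (where $\p_a$ acting on the coefficient $-\psi_1(a)$ from $p_2$ produces a term $-\psi_2(a)\,r$ of index $1 < 3/2$) confirms that equality cannot hold in general. This weakening is harmless for the bound.

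For the moment bound, set $\widehat{T} := T - r\psi_0(a) = \sum_{k=1}^r \overline{\log X_k}$. This is a sum of $r$ i.i.d.\ centered random variables, and by Remark \ref{remark - Mellin consequences} each $\log X_k$ has finite exponential moments. Rosenthal's (or Marcinkiewicz--Zygmund's) inequality gives $\E[|\widehat{T}|^q] \le C_q\,r^{q/2}$ for every $q \ge 1$. Applying the triangle inequality in $L^k(\P)$ to the decomposition from the previous step, together with $a_j/2 + b_j \le b/2$,
\[
\E\bigl[|p_b(T,a;r)|^k\bigr]^{1/k} \le \sum_j |c_j(a)|\, \E\bigl[|\widehat{T}|^{a_j k}\bigr]^{1/k}\, r^{b_j} \le C_{b,k}\sum_j r^{a_j/2+b_j} \le C'_{b,k}\,r^{b/2}.
\]
Raising to the $k$-th power yields \eqref{eqn: poly-bound}.

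The main ``obstacle'' is really just bookkeeping: tracking how the two operations in \eqref{eqn: recursion 2} act on the weighted degree and recognizing that the $\p_a c_j(a)$ contributions force the invariant to be an inequality rather than an equality. Once this is accounted for, the moment bound is an immediate consequence of classical concentration estimates for i.i.d.\ sums.
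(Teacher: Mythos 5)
Your proof is correct and follows the same inductive strategy as the paper, but you have in fact located and repaired a genuine error in the paper's statement and proof. The paper asserts the invariant with \emph{equality}, $\tfrac{a_j}{2}+b_j=\tfrac{n}{2}$, and the inductive step in the paper only tracks the action of $\p_a$ and of multiplication by $(t-r\psi_0(a))$ on the monomial $(t-r\psi_0(a))^{a_j}r^{b_j}$, silently omitting the contribution from $\p_a$ hitting the coefficient $c_j(a)$. Your $n=3$ check is a valid counterexample: computing directly,
\[
p_3(t,a;r)=-r\psi_2(a)-3r\psi_1(a)\bigl(t-r\psi_0(a)\bigr)+\bigl(t-r\psi_0(a)\bigr)^3,
\]
and the leading term $-r\psi_2(a)$ has $(a_j,b_j)=(0,1)$ with $\tfrac{a_j}{2}+b_j=1\neq\tfrac{3}{2}$. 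Thus the claimed equality fails, and the correct invariant is the inequality $\tfrac{a_j}{2}+b_j\le\tfrac{n}{2}$ as you state. This weakening is exactly what is needed, and it does not affect the application: only the upper bound on the weighted degree enters the moment estimate. Your derivation of \eqref{eqn: poly-bound} from the weakened decomposition, via the triangle inequality in $L^k$ and the Rosenthal/Marcinkiewicz--Zygmund bound $\E[|\widehat T|^q]\le C_q r^{q/2}$ for the centered i.i.d.\ sum $\widehat T=T-r\psi_0(a)$, is precisely the argument the paper leaves implicit, and it goes through unchanged with $\tfrac{a_j}{2}+b_j\le\tfrac{b}{2}$ in place of equality.
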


\begin{lemma}\label{improved central moment}
With the same assumptions as in Lemma \ref{lemma: truncation}, for all $k\in \N$ there exist positive constants $C_k=C_k(a_1,a_2)$ (locally bounded) such that for all even $k\geq 2$.
\begin{equation}
    \E[(\ov{\log Z_{m,n}})^k]\leq C_k N^{(\frac{1}{3}+\frac{\delta}{3})k} \quad \text{ for all $N\in \N$.}\label{eqn: improved central moment}
\end{equation}
\end{lemma}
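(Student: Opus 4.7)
The plan is to bootstrap via the moment-cumulant identity
$$\E\bigl[(\ov{\log Z_{m,n}})^k\bigr]=\sum_{\pi\colon |B|\geq 2\ \forall B\in\pi}\prod_{B\in\pi}\kappa_{|B|}(\log Z_{m,n}),$$
so the task reduces to bounding each cumulant $\kappa_j(\log Z_{m,n})$, $2\leq j\leq k$, by $CN^{(1/3+\delta/3)j}$; odd $j$ are handled by Jensen from the surrounding even bounds. For this I would use Corollary \ref{Cor: exact formula for cumulant of free energy}, which writes $\kappa_j$ as the boundary contribution $n\kappa_j(\log R^2)-m\kappa_j(\log R^1)$ plus a partition sum of products of mixed expectations $\E[(\ov{\log Z_{m,n}})^{a_{i,B}}p_{b_{i,B}}(S_m,a;m)]$, and then proceed by induction on even $k\geq 2$.

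Base case $k=2$: only $i=1$ and the trivial partition contribute. The correction is a constant times $\E[\sigma_1(t_1)]$, bounded via Lemma \ref{lemma: sigma norm} and Corollary \ref{cor: t_1 moment} by $CN^{2/3+\delta/2+\epsilon'}$, while the leading term $n\psi_1^{f^2}(a_2)-m\psi_1^{f^1}(a_1)$ is $O(N^{2/3})$ at the characteristic direction by \eqref{direction}. Choosing $\epsilon'<\delta/6$ gives $\kappa_2\leq CN^{2/3+2\delta/3}$.

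Inductive step: first apply Lemma \ref{lemma: truncation} to replace every $p_{b}(S_m,a;m)$ by $p_{b}(S_\tau,a;\tau)$ with $\tau=\floor{N^{2/3+\delta/2+\epsilon'}}$ at negligible cost $O(N^{-M})$. Each remaining factor $\E[(\ov{\log Z_{m,n}})^a p_b(S_\tau,a;\tau)]$ is then bounded by one of three estimates, depending on $(a,b)$. (a) If $b=0$, invoke the inductive hypothesis (via Jensen for odd $a<k$) to get $|\E[(\ov{\log Z_{m,n}})^a]|\leq CN^{(1/3+\delta/3)a}$. (b) If $b\geq 1$ and $b$ is comparable to or larger than $a$, direct H\"older combined with the standing assumption \eqref{eqn: assumption 2} and Proposition \ref{prop: poly-bound} gives
$$\|\ov{\log Z_{m,n}}\|_{2a}^a\,\|p_b(S_\tau,a;\tau)\|_2\leq CN^{(1/3+\delta)a+(1/3+\delta/4+\epsilon'/2)b}.$$
(c) If $b\geq 1$ but $a\gg b$, expand via the bottom identity of Corollary \ref{Cor: exact formula for cumulant of free energy} as $\sum_{\ell_1+\cdots+\ell_a=b}\binom{b}{\ell_1,\dots,\ell_a}\E[\prod_i\sigma_{\ell_i}(t_1\wedge\tau)]$ and apply H\"older with Lemma \ref{lemma: sigma norm} plus Corollary \ref{cor: t_1 moment}; this yields $CN^{(1/3+\delta)(a-1)+(2/3+\delta/2)b+\epsilon'}$. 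A case analysis on the blocks, specifically on $|B_+|:=\#\{B\colon b_{i,B}\geq 1\}$, shows that the product of the block-wise bounds is $\leq CN^{(1/3+\delta/3)j}$ for every term in the partition sum, completing the inductive step once the leading boundary $O(N)$ (or $O(N^{2/3})$ for $j=2$) is absorbed.

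The main obstacle is the combinatorial case analysis consistently combining (a), (b), and (c) across all blocks and partitions. The underlying mechanism that drives the $\delta\mapsto\delta/3$ improvement is the square-root polynomial scaling $\|p_b(S_\tau,a;\tau)\|_2\lesssim\tau^{b/2}$ from Proposition \ref{prop: poly-bound}: with $\tau=N^{2/3+\delta/2+\epsilon'}$ this equals $N^{(1/3+\delta/4+\epsilon'/2)b}$, so each $S$-factor contributes only $\delta/4$ to the $\delta$-exponent instead of the naive $\delta$. Summed with the inductive input from (a), which converts the $N^{(1/3+\delta)a}$ H\"older bound into $N^{(1/3+\delta/3)a}$ on the $b=0$ blocks, the worst-case $\delta$-exponent across partitions saturates precisely at $\delta k/3$, matching \eqref{eqn: improved central moment}.
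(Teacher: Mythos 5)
Your overall scaffolding (induction on $k$, the Corollary's expansion of $\kappa_k$ into mixed moments $\E[(\ov{\log Z_{m,n}})^{a_{j,B}}p_{b_{j,B}}]$, truncation of $S_m$ to $S_{\floor{\tau}}$, and the scaling $\|p_b\|\lesssim\tau^{b/2}$ from Proposition \ref{prop: poly-bound}) does track the paper, but the case analysis by which you then propose to bound each factor misses the one device that actually closes the induction, and without it your exponents overshoot.

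Concretely, consider the single-block term coming from $\pi=\{\{1,\dots,k\}\}$ with $j=k-1$, which gives $\E[(\ov{\log Z_{m,n}})^{k-1}p_1(S_{\floor\tau},a;\floor\tau)]$. Your case (b) bound
$\|\ov{\log Z_{m,n}}\|_{2a}^{a}\|p_b\|_2\leq CN^{(1/3+\delta)a+(1/3+\delta/4+\epsilon'/2)b}$
with $(a,b)=(k-1,1)$ gives exponent $(1/3)k+\delta(k-3/4)+\epsilon'/2$, while your case (c) bound gives $(1/3)k+\delta(k-3/2)+\epsilon'$. In both, the coefficient of $\delta$ exceeds $k/3$ once $k\geq 4$, so you land above the target $N^{(1/3+\delta/3)k}$ no matter how the combinatorial bookkeeping over blocks is arranged. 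The standing hypothesis \eqref{eqn: assumption 2} feeds a full $\delta$ into the exponent for every power of $\ov{\log Z_{m,n}}$ that you send through H\"older with a fixed small exponent like $2a$, and you have no mechanism to trade that back down.

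The paper avoids this by choosing H\"older exponents so that the free energy always appears as $\E[(\ov{\log Z_{m,n}})^k]^{a_{j,B}/k}$ -- the full $k$-th moment to a \emph{fractional} power -- so the product over blocks collapses to $\E[(\ov{\log Z_{m,n}})^k]^{a_j/k}\cdot\tau^{b_j/2}$ with $a_j+b_j=k$ and $a_j\leq k-1$. Young's inequality then turns this into $\eta\,\E[(\ov{\log Z_{m,n}})^k]+C(\eta)\tau^{k/2}$, and the term $\eta\,\E[(\ov{\log Z_{m,n}})^k]$ is absorbed into the left-hand side of the cumulant--moment identity $\kappa_k=\E[(\ov{\log Z_{m,n}})^k]+\sum_\alpha c_\alpha\prod_i\E[(\ov{\log Z_{m,n}})^{\alpha_i}]$ (not into the product-of-cumulants expansion you wrote, which is the inverse identity). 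This self-referential absorption is exactly what replaces $\delta$ with $\delta/4+\epsilon/2\leq\delta/3$ in the exponent; without it, no choice of H\"older exponents or case split on $(a,b)$ will reach $\delta/3$ for the worst blocks with $b_{j,B}$ small.
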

\begin{proof}
The proof is by induction on $k$. For $k=2$, \eqref{eqn: improved central moment} holds with $\delta=0$. Assuming the estimate for even exponents less than $k$, we use the first expression in Corollary \ref{Cor: exact formula for cumulant of free energy} to express the cumulant $\kappa_k(\log Z_{m,n})$ as a sum of terms of the form
\begin{equation}\label{eqn: term-to-bound}
\prod_{B\in \pi} \mathbb{E}\left[(\ov{\log Z_{m,n}})^{a_{j,B}}p_{b_{j,B}}(S_m,a_1;m)\right],
\end{equation}
where $\pi$ is a partition of $\{1,\ldots, k\}$ into $|\pi|$ blocks $B$, and $a_{j,B}+b_{j,B}=|B|$. 

Using equation \eqref{eqn: ibp} and Lemma \ref{lemma: truncation} with $K> 2k$, we have, for $\tau=n^{2/3+\delta/2+\epsilon}$,
\begin{align*}
&~\prod_{B\in \pi} \mathbb{E}\left[(\ov{\log Z_{m,n}})^{a_{j,B}}p_{b_{j,B}}(S_m,a_1;m)\right]\\
=&~\prod_{B\in \pi} \mathbb{E}\left[(\ov{\log Z_{m,n}})^{a_{j,B}}p_{b_{j,B}}(S_{\floor{\tau}},a_1;\floor{\tau})\right]+O(n^{-k}).
\end{align*}
Taking absolute values and applying H\"older's inequality,
\begin{align*}
&\left|\mathbb{E}\left[(\ov{\log Z_{m,n}})^{a_{j,B}}p_{b_{j,B}}(S_{\floor{\tau}},a_1;\floor{\tau})\right]\right|\\
\le&~\mathbb{E} [(\ov{\log Z_{m,n}})^k]^{\frac{a_{j,B}}{k}}\mathbb{E}[|p_{b_{j,B}}(S_{\floor{\tau}},a_1;\floor{\tau})|^{k'}]^{\frac{b_{j,B}}{k'}}\\
\le&~ Cn^{((1/3)+\delta/4+\epsilon/2)b_{j,B}}\mathbb{E} [(\ov{\log Z_{m,n}})^k]^{\frac{a_{j,B}}{k}},
\end{align*}
where $\frac{a_{j,B}}{k}+\frac{1}{k'}=1$. The last inequality follows from equation \eqref{eqn: poly-bound} in Proposition \ref{prop: poly-bound}.  Taking the product over $B\in \pi$, we have, up to a constant factor, the bound:
    \begin{equation}\label{eqn: apply young}n^{((1/3)+\delta/4+\epsilon/2)b_j}\mathbb{E} [(\ov{\log Z_{m,n}})^k]^{\frac{a_j}{k}},
\end{equation}
where 
\[ a_j:=\sum_B a_{j,B}\quad \text{ and } \quad b_j:=\sum_B b_{j,B},\]
so $\frac{a_j}{k}+\frac{b_j}{k}=1$.  Note that for $1\leq j\leq k-1$, we have $a_j\le k-1$.  Applying Young's inequality $xy\le \frac{1}{p}x^p+\frac{1}{q}y^q$ to \eqref{eqn: apply young}, we find that for $\eta>0$, any term of the form \eqref{eqn: term-to-bound} is bounded by
\[\eta \mathbb{E}[(\ov{\log Z_{m,n}})^k]+C(\eta)n^{((1/3)+\delta/4+\epsilon/2)k}+O(n^{-k}).\]
Combining this with Corollary \ref{Cor: exact formula for cumulant of free energy}, we have
\begin{equation}\label{eqn: eta-sum}\kappa_k(\log Z_{m,n})= C(k)\eta \mathbb{E}[(\ov{\log Z_{m,n}})^k]+C(k)C(\eta)n^{((1/3)+\delta/4+\epsilon/2)k}+O(n).
\end{equation}
Writing
\begin{equation}\label{eqn: cumulant-sum}
\kappa_k(\log Z_{m,n})=\mathbb{E}[(\ov{\log Z_{m,n}})^k]+\sum_{\substack{|\alpha|=k\\0\leq\alpha_i<k}} c_\alpha \prod_{i=1}^{|\alpha|} \mathbb{E}[(\ov{\log Z_{m,n}})^{\alpha_i}],
\end{equation}
where the sum is over multi-indices $\alpha=(\alpha_1,\ldots,\alpha_k)$, $\sum_i \alpha_i=k$. If some $\alpha_i=k-1$, then the product must equal zero.  Therefore, by the induction assumption, all terms in the sum on the right of \eqref{eqn: cumulant-sum} are of order $n^{((1/3)+\delta/3)k}$. Choosing $\eta$ sufficiently small in \eqref{eqn: eta-sum} and absorbing $\epsilon/2$ into $\delta/4$, we obtain the result.
\end{proof}

\subsection{Finishing the argument}

Combining Corollary \ref{cor: t_1 moment} and Lemma \ref{improved central moment} we obtain the following:

\begin{lemma}\label{lemma: inductive argument}
Assume the polymer environment is distributed as in \eqref{polymer environment distribution} and the sequence $(m,n)=(m_N,n_N)_{N=1}^\infty$ satisfies 
\[
|m-N\psi_1^{f^2}(a_2)|\vee|n-N\psi_1^{f^1}(a_1)|\leq \gamma N^{\frac{2}{3}}
\]
where $\gamma$ is some positive constant.  Further, suppose there exist positive constants $\delta,\epsilon_0, C(k)$ for $k\in\{2, 4, \cdots\}$  such that $[a_1-\epsilon_0,a_1+\epsilon_0]\times [a_2-\epsilon_0,a_2+\epsilon_0]\subset D(M_{f^1})\times D(M_{f^2})$ and the following hold for any even $k$ and any $\lambda\in [-\epsilon_0,\epsilon_0]$:

\begin{equation*}
\E^{(a_1(\lambda),a_2(\lambda))}\left[\left(\ov{\log Z_{m,n}}\right)^k\right]\leq C(k) N^{(\frac{1}{3}+\delta)k}.     
\end{equation*}

Then there exist constants $C'(k)>0$ for $k\in \{2,4,\dots\}$ such that for any even $k$ and any $\lambda\in [-\frac{\epsilon_0}{2},\frac{\epsilon_0}{2}]$:
\[
\E^{(a_1(\lambda),a_2(\lambda))}\left[\left(\ov{\log Z_{m,n}}\right)^k\right]\leq C(k) N^{(\frac{1}{3}+\frac{\delta}{3})k}.
\]
\end{lemma}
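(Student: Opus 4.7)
The plan is to reduce directly to Lemma \ref{improved central moment}, which already supplies the improvement $\delta\to\delta/3$ at the unshifted base point $(a_1,a_2)$. The only additional content needed here is to observe that this improvement can be propagated uniformly across the shift window, at the cost of halving $\epsilon_0$.

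Fix an arbitrary $\lambda_0\in[-\epsilon_0/2,\epsilon_0/2]$ and set $(\tilde a_1,\tilde a_2):=(a_1(\lambda_0),a_2(\lambda_0))=(a_1-\lambda_0,a_2+\lambda_0)$. I would first verify that the hypotheses of Lemma \ref{improved central moment} (equivalently those of Lemma \ref{lemma: truncation}) hold at $(\tilde a_1,\tilde a_2)$ with window parameter $\tilde\epsilon_0:=\epsilon_0/2$ and the same $\delta$: for any $\tilde\lambda\in[-\tilde\epsilon_0,\tilde\epsilon_0]$ one has $\lambda_0+\tilde\lambda\in[-\epsilon_0,\epsilon_0]$, so the hypothesis of the present lemma directly supplies
\[\E^{(\tilde a_1-\tilde\lambda,\,\tilde a_2+\tilde\lambda)}\bigl[(\ov{\log Z_{m,n}})^k\bigr]=\E^{(a_1(\lambda_0+\tilde\lambda),\,a_2(\lambda_0+\tilde\lambda))}\bigl[(\ov{\log Z_{m,n}})^k\bigr]\le C(k)N^{(1/3+\delta)k},\]
while the inclusion $[\tilde a_1-\tilde\epsilon_0,\tilde a_1+\tilde\epsilon_0]\times[\tilde a_2-\tilde\epsilon_0,\tilde a_2+\tilde\epsilon_0]\subset D(M_{f^1})\times D(M_{f^2})$ follows immediately from $\lambda_0\in[-\epsilon_0/2,\epsilon_0/2]$ and the original inclusion.

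Next I would apply Lemma \ref{improved central moment} at the recentered point $(\tilde a_1,\tilde a_2)$ to obtain a constant $C'(k)$ with
\[\E^{(\tilde a_1,\tilde a_2)}\bigl[(\ov{\log Z_{m,n}})^k\bigr]\le C'(k)N^{(1/3+\delta/3)k}\]
for every even $k\ge 2$ and every $N\in\N$. Since $\lambda_0$ was an arbitrary point of $[-\epsilon_0/2,\epsilon_0/2]$, this is already the claim; a single constant $C'(k)$ uniform in $\lambda_0$ follows from the local boundedness in $(a_1,a_2)$ already asserted in Lemma \ref{improved central moment}, together with compactness of the shift window.

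The thing that requires care is precisely this propagation of local boundedness: Lemma \ref{improved central moment} gives its estimate at a single base point, and one must know its constants vary controllably with that point so that they can be made uniform in $\lambda_0$. All the ingredients behind its proof, in particular the tail bounds of Corollary \ref{cor: t_1 moment}, the truncation bound of Lemma \ref{lemma: truncation}, and the polynomial moment estimates of Proposition \ref{prop: poly-bound}, are already stated with this local boundedness, so the remaining task is a bookkeeping check rather than a substantive obstacle.
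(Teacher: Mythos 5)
Your proposal follows exactly the paper's own route: the paper offers no independent proof of this lemma, only the one-line remark that it follows by ``combining Corollary \ref{cor: t_1 moment} and Lemma \ref{improved central moment},'' and your recentering of the base point to $(\tilde a_1,\tilde a_2)=(a_1(\lambda_0),a_2(\lambda_0))$, verification of the moment hypothesis on the halved window $[-\epsilon_0/2,\epsilon_0/2]$, and appeal to the local boundedness of the constants in Lemma \ref{improved central moment} is precisely the intended argument.

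One caution worth recording, though it is a loose end in the paper itself rather than an error specific to your write-up: when you say you would ``verify that the hypotheses of Lemma \ref{improved central moment} hold at $(\tilde a_1,\tilde a_2)$,'' the hypotheses in question (those of Lemma \ref{lemma: truncation}) include the characteristic-direction condition
$|m-N\psi_1^{f^2}(\tilde a_2)|\vee|n-N\psi_1^{f^1}(\tilde a_1)|\leq \gamma N^{2/3}$,
and your check only covers the moment bound and the domain inclusion. For a fixed nonzero $\lambda_0$, $\psi_1^{f^j}(\tilde a_j)\ne\psi_1^{f^j}(a_j)$, so the sequence $(m_N,n_N)$ drifts off the characteristic line of $(\tilde a_1,\tilde a_2)$ at rate $O(N)$, not $O(N^{2/3})$, and this hypothesis is not literally satisfied at the recentered point. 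The paper silently performs the same recentering in the proof of Corollary \ref{cor: t_1 moment} without addressing this, so you are not diverging from it; but if you wanted a fully rigorous chain you would need to observe that the constants in the lemmas only actually require the direction condition at the original $(a_1,a_2)$ together with shifts of order $O(N^{-1/3})$ (the range $\lambda_j=\pm bu/N$ actually used inside the exit-time tail bound), rather than at a macroscopically shifted base point.
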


Theorem \ref{theorem: Main 1} will follow from repeated application of Lemma \ref{lemma: inductive argument} once we prove the following:

\begin{proposition}\label{prop: 1st step in induction}
Assume the polymer environment is distributed as in \eqref{polymer environment distribution} and the sequence $(m,n)=(m_N,n_N)_{N=1}^\infty$ satisfies 
\begin{equation}
|m-N\psi_1^{f^2}(a_2)|\vee|n-N\psi_1^{f^1}(a_1)|\leq \gamma N^{\frac{2}{3}}\label{char dir}
\end{equation}
where $\gamma$ is some positive constant. Then there exists positive constants $\epsilon_0$ and $C(k)$ for $k\in \{2,4,\dots\}$ such that $[a_1-\epsilon_0,a_1+\epsilon_0]\times [a_2-\epsilon_0,a_2+\epsilon_0]\subset D(M_{f^1})\times D(M_{f^2})$ and the following hold for any even $k$ and any $\lambda\in [-\epsilon_0,\epsilon_0]$:

\begin{equation}
    \E^{(a_1(\lambda),a_2(\lambda))}\left[\left(\ov{\log Z_{m,n}}\right)^k\right]\leq C(k) N^{(\frac{1}{3}+\frac{1}{6})k}.    
\end{equation}
\end{proposition}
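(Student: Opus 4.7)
The plan is to apply classical moment bounds for sums of i.i.d.\ random variables to the decomposition \eqref{eqn: NSEW}. Since we only need the trivial scale $N^{k/2}=N^{(1/3+1/6)k}$, no subtle concentration argument is needed; the assertion is essentially the statement that $\log Z_{m,n}$ is a sum of $O(N)$ i.i.d.\ terms with uniformly bounded moments.

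First, under the down-right property (Proposition \ref{proposition 4 models have DR property}), the identity \eqref{eqn: NSEW} yields
\[\overline{\log Z_{m,n}} = \overline{W_n}+\overline{N_n},\]
where $W_n=\sum_{j=1}^n \log R^2_{0,j}$ and $N_n=\sum_{i=1}^m \log R^1_{i,n}$ are sums of i.i.d.\ random variables. Under $\P^{(a_1(\lambda),a_2(\lambda))}$ these summands have distribution $m_{f^2}(a_2(\lambda))$ and $m_{f^1}(a_1(\lambda))$ respectively. By Remark \ref{remark - Mellin consequences}, $\log R^1$ and $\log R^2$ possess finite exponential moments, and all of their moments are smooth (hence continuous and uniformly bounded) functions of the boundary parameters on the compact set $[a_1-\epsilon_0,a_1+\epsilon_0]\times [a_2-\epsilon_0,a_2+\epsilon_0]\subset D(M_{f^1})\times D(M_{f^2})$.

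Next, apply the Marcinkiewicz--Zygmund (or Rosenthal) inequality for sums of i.i.d.\ centered random variables with finite $k$-th moment to each of $\overline{W_n}$ and $\overline{N_n}$. This produces a constant $C=C(k)$, independent of $\lambda\in[-\epsilon_0,\epsilon_0]$, such that
\[\mathbb{E}^{(a_1(\lambda),a_2(\lambda))}[|\overline{W_n}|^k] \leq C\,n^{k/2}\quad\text{and}\quad \mathbb{E}^{(a_1(\lambda),a_2(\lambda))}[|\overline{N_n}|^k] \leq C\,m^{k/2}.\]
Combining via $|a+b|^k\leq 2^{k-1}(|a|^k+|b|^k)$ gives
\[\mathbb{E}^{(a_1(\lambda),a_2(\lambda))}[|\overline{\log Z_{m,n}}|^k] \leq C(k)\left(n^{k/2}+m^{k/2}\right).\]
Condition \eqref{char dir}, combined with continuity of $\psi_1^{f^j}$ on the compact parameter interval, implies $m,n=O(N)$ uniformly in $\lambda$, so the right-hand side is at most $C(k)N^{k/2}=C(k)N^{(1/3+1/6)k}$, as required.

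The only point demanding any care is tracking the uniformity of the constants over $\lambda$, and this is immediate from continuity of the Mellin transform $M_{f^j}$ and its logarithmic derivatives on $D(M_{f^j})$ (Remark \ref{remark - Mellin consequences}). There is no substantive obstacle: the proposition supplies the base case for the induction driven by Lemma \ref{lemma: inductive argument}, where the real work of improving $1/3+\delta$ to $1/3+\delta/3$ has already been done in Lemma \ref{improved central moment}.
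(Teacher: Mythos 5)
Your proof is correct and follows essentially the same approach as the paper's, differing only in the choice of decomposition: you split $\log Z_{m,n}=W_n+N_m$ while the paper uses the equivalent split $\log Z_{m,n}=S_m+E_n$ from \eqref{eqn: NSEW}; both reduce the claim to standard $O(m^{k/2})$, $O(n^{k/2})$ bounds for sums of i.i.d.\ centered variables with finite moments, plus the uniformity over $\lambda$ from continuity of the Mellin-transform quantities on a compact parameter interval.
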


\begin{proof}
Since $(a_1,a_2)\in D(M_{f^1})\times D(M_{f^2})$, there exists a positive constant $\epsilon_0$ such that  $[a_1-\epsilon_0,a_1+\epsilon_0]\times [a_2-\epsilon_0,a_2+\epsilon_0]\subset D(M_{f^1})\times D(M_{f^2})$. 
With notation as in Subsection \ref{subsec: DRP consequences}, if we define $A:=\ov{\log Z_{m,n}}$, then $A=\ov{S_m}+\ov{E_n}$.  Thus, for even $k$,
\begin{equation}
    \E[A^k]\leq 2^{k-1}(\E[(\ov{S_m})^k]+\E[(\ov{E_n})^k]).\label{eq: minkowsi bound}
\end{equation}
By Proposition \ref{proposition 4 models have DR property}, all four models described by \eqref{polymer environment distribution} have the down-right property.  So by the discussion in Subsection \ref{subsec: DRP consequences},  $S_m$ and $E_n$ are both sums of i.i.d.\  random variables whose common distributions continuously depends on $a_1$ and $a_2$ respectively.  Moreover, by Remark \ref{remark - Mellin consequences}, all random inside of the summations have finite exponential moments. Therefore, for every $k\in \{2,4,\dots\}$ there exists a positive constant $C_k=C_k(a_1,a_2)$, which is continuous in $(a_1,a_2)$, such that 
\[
\E[(\ov{S_m})^k]\leq C_km^{k/2} \quad \text{ for all } m\geq 1
\]
and
\[
\E[(\ov{E_n})^k]\leq C_kn^{k/2} \quad \text{ for all } n\geq 1.
\]
Using equations \eqref{eq: minkowsi bound} and \eqref{char dir} now yields the desired result.
\end{proof}

\begin{proof}[Proof of Theorem \ref{theorem: Main 1}]
The four basic beta-gamma models \eqref{model-IG}-\eqref{model-IB} can all be described by equation \eqref{polymer environment distribution}.  So let $\epsilon>0$ and $(a_1,a_2)\in D(M_{f^1})\times D(M_{f^2})$.  Fix even integers $k$, $M$ such that $p\leq k$ and
%and find an integer $M$ large enough such that
\[
\frac{(1/6)}{3^M}\leq \epsilon.
\]
By Jensen's inequality, it suffices to show the bounds \eqref{eqn: Main theorem 1} and \eqref{eqn: Main theorem 2} hold with $p$ replaced by $k$. Now apply Proposition \ref{prop: 1st step in induction} followed by $M$ consecutive applications of Lemma \ref{lemma: inductive argument} to obtain the bound \eqref{eqn: Main theorem 1}.  Finally, apply Corollary \ref{cor: t_1 moment} to both $t_1$ and $t_2$ to obtain the bound \eqref{eqn: Main theorem 2}.
\end{proof}

% \begin{proof} 
% The proof is by induction on $k$. When $k=2$ the result follows with $\delta=0$.  Assuming the estimate holds for all cumulants of order less than $k$, we use Corollary \ref{Cor: exact formula for cumulant of free energy} to express $\kappa_k(\log Z_{m,n})$ as a sum over joint cumulants of the form 
% \[
% \kappa_k(\underbrace{\log Z_{m,n},\dots,\log Z_{m,n}}_{j \text{ times }},\underbrace{S,\dots,S}_{j-k \text{ times}}).
% \]
% where $1\leq j\leq k-1$.
% By Lemma \ref{lemma: truncation}, it suffices to show that for fixed $1\leq j\leq k-1$,
% \[
% \kappa_k(\underbrace{\log Z_{m,n},\dots,\log Z_{m,n}}_{j \text{ times }},\underbrace{S_r,\dots,S_r}_{j-k \text{ times}})=\kappa_k(\underbrace{\log \ov{Z_{m,n}},\dots,\ov{\log Z_{m,n}}}_{j \text{ times }},\underbrace{\ov{S_r},\dots,\ov{S_r}}_{j-k \text{ times}})
% \]
% satisfies the bound \eqref{eqn: improved central moment}.
% \end{proof}

%In particular 
%\begin{align}
%\frac{\p}{\p a}B(a) &=\Cov^a\Bigl(A(\{X_k\}_{k=1}^r),S\Bigr)     \text{    and }\\
%\frac{\p^2}{\p a^2}B(a) &=\Cov^a\Bigl(A(\{X_k\}_{k=1}^r),\bigl(\overline{S}\bigr)^2\Bigr).
%\end{align}

\appendix
\section{Proof of Theorem \ref{theorem: confirming hypothesis L-moments}}
The next lemma says that it suffices to verify Hypothesis \ref{hyp: L moments} for $f(x)=e^{-bx}$, $f(x)=(1-x)^{b-1}\ind_{\{0<x<1\}}$, and $f(x)=\left(\frac{x}{1+x}\right)^b$ where $b>0$.

For $A\subset \R$ write $-A=\{ -a : a\in A\}$ and $A^{-1}=\{a^{-1}: a\in A \}$ assuming that $0\notin  A$. 

\begin{lemma}\label{lemma: f to g}
If the function $f$ satisfies Hypothesis \ref{hyp: L moments}, then so does the function $g(x):=f(\frac{1}{x})$ for $x\in (0,\infty)$, with the same constants $C_j(a)$.
\end{lemma}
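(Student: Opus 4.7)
The plan is to exploit the substitution $y \mapsto 1/y$ systematically and show that passing from $f$ to $g(x) = f(1/x)$ corresponds, at the level of all relevant quantities, to the change of variables $(a,x) \mapsto (-a, 1/x)$, up to predictable sign changes.

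First, by the substitution $u = 1/x$ in the Mellin integral, I would show
\[
M_g(a) = M_f(-a),
\]
so $D(M_g) = -D(M_f)$ and $\psi_0^g(a) = -\psi_0^f(-a)$. Applying the same substitution to the defining integral of $L^g$, using the identity $\int_0^\infty u^{-a-1}(\psi_0^f(-a)-\log u)f(u)\,du = 0$ to flip the tail integral $\int_{1/x}^\infty$ into $\int_0^{1/x}$, I expect to obtain
\[
L^g(a,x) = L^f(-a, 1/x).
\]

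The key algebraic step is to understand how $\tilde{\partial}^g$ acts on functions of the form $H(a,x) = G(-a, 1/x)$. A direct computation using the chain rule, together with the identity $L^g(a,x) = L^f(-a, 1/x)$ just established, gives
\[
\tilde{\partial}^g H(a,x) = -G_a(-a, 1/x) - \tfrac{1}{x} L^f(-a, 1/x) G_x(-a, 1/x) = -(\tilde{\partial}^f G)(-a, 1/x).
\]
Iterating this identity by induction (each application of $\tilde{\partial}^g$ introduces one sign flip, and each application to the already sign-flipped function introduces another), one obtains
\[
(\tilde{\partial}^g)^k L^g(a,x) = (-1)^k\, (\tilde{\partial}^f)^k L^f(-a, 1/x)
\]
for every $k \ge 0$.

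With this identity in hand, the hypothesis for $g$ follows at once: taking absolute values and invoking Hypothesis \ref{hyp: L moments} for $f$ at the point $(-a, 1/x) \in D(M_f) \times \supp(f)$,
\[
\left|(\tilde{\partial}^g)^k L^g(a,x)\right| \le C_k(-a)\bigl(1 + |\log(1/x)|^{k+1}\bigr) = C_k(-a)\bigl(1 + |\log x|^{k+1}\bigr),
\]
which is exactly the required bound on $D(M_g) \times \supp(g)$, with the transplanted constants $a \mapsto C_k(-a)$ continuous on $D(M_g)$. Smoothness of $g$ on its (open) support and non-emptiness of $D(M_g)$ follow immediately from the same properties for $f$ under the diffeomorphism $x \mapsto 1/x$. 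I expect the main (and only nontrivial) obstacle to be bookkeeping: carrying out the inductive step cleanly and keeping track of the sign in $\tilde{\partial}^g[G(-a,1/x)] = -(\tilde{\partial}^f G)(-a,1/x)$, since this is where the interplay between the $a$-derivative and the transport term in $\tilde{\partial}$ must be checked carefully.
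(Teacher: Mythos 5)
Your proposal is correct, and it arrives at the same key identity $(\tilde{\p}^g)^k L^g(a,x) = (-1)^k(\tilde{\p}^f)^k L^f(-a,1/x)$ as the paper, but by a different computational route. The paper's proof passes through the quantile coupling functions $F^f$, $H^f$ introduced in Section \ref{sec: coupling}: it cites the three relations $F^g(a,x)=1-F^f(-a,1/x)$, $L^g(a,x)=L^f(-a,1/x)$, $H^g(a,p)=1/H^f(-a,1-p)$, and then exploits the crucial shortcut that along the curve $x=H^g(a,p)$ the operator $\tilde{\p}$ collapses to $\partial_a$; the factor $(-1)^k$ then appears at once from differentiating $L^f(-a,H^f(-a,1-p))$ $k$ times in $a$, and a final substitution $x=H^g(a,p)$ recovers the identity. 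Your proof avoids the coupling machinery entirely: after establishing $L^g(a,x)=L^f(-a,1/x)$ directly from the Mellin integral (your use of the centered-moment identity $\int_0^\infty u^{-a-1}(\psi_0^f(-a)-\log u)f(u)\,du=0$ to convert the tail integral is exactly right), you show by a one-line chain-rule calculation that the change of variables $(a,x)\mapsto(-a,1/x)$ intertwines $\tilde{\p}^g$ with $-\tilde{\p}^f$, and iterate. This is more self-contained and elementary, at the cost of having to do the $\tilde{\p}$ chain-rule bookkeeping by hand rather than borrowing the $\tilde{\p}=\partial_a$ shortcut. Your handling of the constants, noting that the hypothesis on $D(M_g)$ holds with $C_k(-a)$ (i.e.\ the constants for $f$ precomposed with $a\mapsto -a$), is the correct reading of the lemma's ``same constants'' claim under the identification $D(M_g)=-D(M_f)$.
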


\begin{proof} Recall the notation in Sections \ref{section Mellin} and \ref{sec: coupling}. 
Clearly $\supp(g)=\supp(f)^{-1}$ and $D(M_f)=-D(M_g)$. As in the proof of \cite[Lemma A.1]{CN2018}, one can verify that:
\begin{align}
F^g(a, x)&=1-F^f(-a,\frac{1}{x})\quad &\text{ for } (a,x)\in D(M_g)\times \supp(g)\nonumber\\
L^g(a,x)&=L^f(-a,\frac{1}{x})\quad &\text{ for } (a,x)\in D(M_g)\times \supp(g)\nonumber\\
H^g(a,p)&=\frac{1}{H^f(-a,1-p)} \quad &\text{ for } (a,p)\in D(M_g)\times (0,1).\label{eqn: Hg to Hf}
\end{align}
Combining the last two equalities gives 
\begin{equation}
L^g(a,H^g(a,p))=L^f(-a,H^f(-a,1-p))\quad \text{ for } (a,p)\in D(M_g)\times (0,1).\label{eqn: Lg to Lf}
\end{equation}
Recall also the definition of the derivative $\tilde{\partial}$ in \eqref{eqn: tilde-der}. We write $\tilde{\p}_f$ and $\tilde{\p}_g$ to denote the dependence on the underlying function.  Recall that
\[
\tilde{\p}_g^kL^g(a,H^g(a,p))=\frac{\p^k}{\p a^k} L^g(a,H^g(a,p))\quad \text{ for all } (a,p)\in D(M_g)\times \supp(g).
\]
Applying $\frac{\p^k}{\p a^k}$ to equation \eqref{eqn: Lg to Lf} gives
\begin{align*}
    \frac{\p^k}{\p a^k}L^g(a,H^g(a,p))&=(-1)^k\frac{\p^k}{\p b^k}\left(L^f(b,H^f(b,1-p))\right)\Big|_{b=-a}\\
    &=(-1)^k\tilde{\p}_f^kL^f(-a,H^f(-a,1-p)),
\end{align*}
so
\[
\tilde{\p}_g^kL^g(a,H^g(a,p))=(-1)^k\tilde{\p}_f^kL^f(-a,H^f(-a,1-p))\quad \text{ for all } (a,p)\in D(M_g)\times (0,1).
\]
Making the substitution $x=H^g(a,p)$ and using equation \eqref{eqn: Hg to Hf}, we get 
\[
\tilde{\p}_g^kL^g(a,x)=(-1)^k\tilde{\p}_f^kL^f(-a,\frac{1}{x})\quad \text{ for all } (a,x)\in D(M_g)\times \supp(g).
\]
Taking absolute values and using the fact that $|\log x|=|\log\frac{1}{x}|$ completes the proof.
\end{proof}

Write $C^\infty(A)$ for the set of smooth functions defined on a set $A$.  For a fixed $f$ with  non-empty $D(M_f)$ and which is smooth on its open support, define the linear transformations $T$ and $S$ on $C^\infty(D(M_f)\times \supp(f))$ by 
\begin{align*}
    T(h)(a,x)&:=\frac{1}{x^a f(x)}\int_0^x h(a,y) y^{a-1}f(y)\mathrm{d}y\\
    S(h)(a,x)&:=\frac{\p h}{\p a}(a,x)+h(a,x)\log x
\end{align*}
for $h\in C^\infty(D(M_f)\times \supp(f))$ and $(a,x)\in D(M_f)\times \supp(f)$.  Notice that when $h(a,x)=\psi^f_0(a)-\log x$, $T(h)=L^f$.  Notice that $\tilde{\p}$ in \eqref{eqn: tilde-der} is also a linear transformation on $C^\infty(D(M_f)\times \supp(f))$.  The following lemma gives a useful recursion for $\tilde{\p}^k L^f$:
\begin{lemma}\label{lemma: Lf deriv recursion}
Assume $f:(0,\infty)\rightarrow [0,\infty)$ has non-empty $D(M_f)$, open $\supp(f)$, and satisfies $f\in C^\infty(\supp(f))$. If $h\in C^\infty(D(M_f)\times \supp(f))$, then for $(a,x)\in D(M_f)\times \supp(f)$, and
\begin{align*}
(\tilde{\p}T(h))(a,x)&= T\circ S(h)(a,x) -\left[\left(a+x\frac{f'(x)}{f(x)}\right)L^f(a,x)+\log x\right]T(h)(a,x)+h(a,x)L^f(a,x).
\end{align*}
Moreover, if there exists an integer $k\geq 1$ and a constant $C=C(a_0,a_1)>0$ such that 
\begin{equation}
    \sup_{a\in[a_0,a_1]}\left|\frac{\p h}{\p a}(a,x)\right|\leq C\left(1+|\log x|^k\right)\quad \text{ for all } x\in \R,\label{eqn: Lf deriv-1}
    \end{equation}
    then 
    \[
    \int_0^\infty h(a,y)y^{a-1}f(y)\mathrm{d}y\equiv 0\Rightarrow \int_0^\infty S(h)(a,y)y^{a-1}f(y)\mathrm{d}y\equiv 0.
    \]
\end{lemma}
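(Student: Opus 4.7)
The plan is to establish the two claims separately, neither of which requires much beyond careful bookkeeping. For the first identity, I would introduce $I(a,x):=\int_0^x h(a,y)y^{a-1}f(y)\,\mathrm{d}y$ so that $T(h)(a,x)=I(a,x)/(x^a f(x))$, and then compute $\p_a T(h)$ and $x\p_x T(h)$ separately before combining them via the definition \eqref{eqn: tilde-der} of $\tilde{\p}$. For the $a$-derivative, differentiation under the integral (which produces an extra $\log y$ factor in the integrand, exactly what is needed to transform $h$ into $S(h)$) together with $\p_a(x^a f(x))^{-1}=-(\log x)/(x^a f(x))$ yields
\[
\p_a T(h)=T\circ S(h)-(\log x)\,T(h).
\]
For the $x$-derivative, the fundamental theorem of calculus gives $\p_x I=h(a,x)x^{a-1}f(x)$, while the quotient rule gives $\p_x[1/(x^a f(x))]=-(a+xf'(x)/f(x))/(x^{a+1}f(x))$. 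Multiplying the resulting expression for $\p_x T(h)$ by $xL^f(a,x)$ and summing with $\p_a T(h)$ produces exactly the asserted formula.

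For the implication about integrals, the key observation is that $S(h)(a,y)y^{a-1}f(y)=\p_a[h(a,y)y^{a-1}f(y)]$. Consequently, once differentiation can be moved under the integral sign,
\[
\int_0^\infty S(h)(a,y)y^{a-1}f(y)\,\mathrm{d}y=\frac{\p}{\p a}\int_0^\infty h(a,y)y^{a-1}f(y)\,\mathrm{d}y=\frac{\p}{\p a}(0)=0.
\]
The interchange can be argued by writing the right-hand side as the limit of difference quotients in $a$ (each of which is identically zero by hypothesis) and invoking dominated convergence. The growth hypothesis \eqref{eqn: Lf deriv-1}, combined with Remark \ref{remark - Mellin consequences}(1) which supplies finite exponential moments of $|\log Y|$ under $m_f(a)$, provides a uniformly integrable dominating function on a compact neighborhood of $a\in D(M_f)$.

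The main obstacle is likely the sign and factor bookkeeping in the first part: one must track the $\log x$ contributions (arising from $\p_a x^{-a}$, and separately absorbed into the definition of $S$ via $\p_a y^{a-1}$ under the integral) as well as the $(a+xf'/f)L^f$ contribution (from the quotient rule applied to $x^a f(x)$) to verify they combine as stated. The second part is essentially a routine application of dominated convergence once the identity $S(h)(a,y)y^{a-1}f(y)=\p_a[h(a,y)y^{a-1}f(y)]$ is recognized and the mean value theorem is used to dominate the difference quotients uniformly in $a$ over a small interval.
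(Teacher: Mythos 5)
Your proposal is correct and takes essentially the same route as the paper: for the identity you compute $\p_a T(h)$ and $\p_x T(h)$ separately (obtaining $\p_a T(h)=T\circ S(h)-(\log x)T(h)$ and $\p_x T(h)=(-a/x-f'/f)T(h)+h/x$) and combine them via the definition of $\tilde{\p}$, and for the implication you recognize $S(h)(a,y)y^{a-1}f(y)=\p_a[h(a,y)y^{a-1}f(y)]$ and justify differentiating under the integral via dominated convergence, citing the finite exponential moments from Remark \ref{remark - Mellin consequences}. The only place where both your writeup and the paper's are equally terse is that the domination requires a bound on $h\cdot\log y$ as well as on $\p_a h$, which \eqref{eqn: Lf deriv-1} alone does not literally supply; but this is a shared minor gap, not a divergence in method.
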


\begin{proof}
Computation yields
\begin{align*}
\frac{\p T(h)}{\p a}(a,x)&=-\log x\cdot T(h)(a,x) +T\circ S(h)(a,x)\\
\frac{\p T(h)}{\p x}(a,x)&=\left(-\frac{a}{x}-\frac{f'(x)}{f(x)}\right)T(h)(a,x)+\frac{h(a,x)}{x},
\end{align*}
which gives the first part.  For the second part, by Remark \ref{remark - Mellin consequences} in Section \ref{section Mellin}, $|\log X|$ has finite exponential moments.  We can therefore exchange the derivative with the integral in the expression 
\[
\frac{\p}{\p a} \int_0^\infty h(a,y) y^{a-1} f(y)\mathrm{d}y.
\]
\end{proof}
For $a\in D(M_f)$ and $x>0$, recursively define
\begin{equation}\label{eqn: hn-def}
    \begin{split}
    h_1(a,x)&:=\psi^f_0(a)-\log x\quad \text{ and }\\
    h_n(a,x)&:= S(h_{n-1})(a,x) \quad \text{ for } n\geq 2.
    \end{split}
\end{equation}
Then $h_n(a,x)$ is an $n$-th degree polynomial in $\log x$ with coefficients that are smooth in $a$.  Thus, there exist constants $C_n>0$ for $n=1,2,\dots$ such that
\[
\sup_{a\in[a_0,a_1]}\left|\frac{\p h_n}{\p a}(a,x)\right|\leq C_n\left(1+|\log x|^n\right)\quad \text{ for all } x>0.
\]
By the second part of Lemma \ref{lemma: Lf deriv recursion},
\begin{equation}
\int_0^\infty h_n(a,x)y^{a-1}f(y)\mathrm{d}y=0 \text{ for all } n\in \N \text{ and } a\in (a_0,a_1).    \label{eqn: hn integrate to 0}
\end{equation}
The functions $h_n$ will serve as a basis generating all functions obtainable from $L^f$ through repeated application of the operation $\tilde{\p}$.  To proceed we define some algebraic structures.  

Given a subset $F\subset C^\infty(D(M_f)\times \supp(f))$, define $\mathcal{A}(F)\subset C^\infty(D(M_f)\times \supp(f))$ to be the algebra generated by $F$ over the ring $C^\infty(D(M_f))$.  More specifically,
$g\in \mathcal{A}(F)\Leftrightarrow$ there exist $c_1,\dots,c_n\in C^\infty(D(M_f))$ and $g_1,\dots,g_n\in C^\infty(D(M_f)\times \supp(f))$ such that 
\[
g(a,x)=\sum_{i=1}^n c_i(a)g_i(a,x) \text{ for all } (a,x)\in D(M_f)\times \supp(f).
\]

Now let 
\begin{equation}\label{eqn: r-def}
    r(x):= x\frac{f'(x)}{f(x)} 
\end{equation}
for $x\in \supp(f)$ and put 
\[
F:=\left\{\log x,\, T(h_n)(a,x),\, r(x)T(h_n)(a,x)\,:n\in \N\right\}.
\]
Define the degree function $\mathrm{deg}:F\rightarrow \Z_+$ by
\[
\deg(\log x)=1\quad \text{ and } \mathrm{deg}(T(h_n))=\mathrm{deg}\left(r T(h_n)\right)=n \text{ for } n\in \N.
\]
Extend the degree function to $\mathcal{A}(F)$ by defining
\[
\deg(c):=0,\quad \deg(g\cdot h):=\deg(g)+\deg(h), \quad \text{ and } \deg(g+h):= \max({\deg(g),\deg(h)})
\]
for $c\in C^\infty(D(M_f))$,  and non-zero $f,g\in C^\infty(D(M_f)\times \supp(f))$.  Note that this turns $deg$ into an algebra homomorphism from $\mathcal{A}(F)$ into $\Z_+$ with the $(+,\max)$ algebra.
For $n\in \N$, let $A_n:=\{g\in \mathcal{A}(F): \deg(g)\leq n\}$.  Note that $A_n$ is linear.
\begin{lemma}\label{lemma: sufficient for L hyp}
Suppose $\supp(f)$ is non-empty and open, $D(M_f)$ is non-empty, $f$ is smooth on its support, and the following two statements hold for every $n\in N$:
\begin{enumerate}
    \item $\t{\p}r\cdot T(h_n)\in A_{n+1}$, and
    \item there exists some $C_n\in C(D(M_f))$ such that 
    \[
    \left(1\vee |r(x)|\right)|T(h_n)(a,x)|\leq C_n(a)\left (1+|\log x|^n\right) \text{ for all } (a,x)\in D(M_f)\times \supp(f).
    \]
\end{enumerate}
\end{lemma}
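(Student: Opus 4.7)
The plan is to show that the two conditions suffice to bound $|\tilde{\partial}^k L^f(a,x)|$ by a polynomial in $|\log x|$ of degree $k+1$, matching Hypothesis \ref{hyp: L moments}. The strategy rests on the observation that $L^f = T(h_1)$, so $L^f \in A_1$, and on showing that $\tilde{\partial}$ maps the filtered algebra into itself in a degree-compatible way, namely $\tilde{\partial}(A_n)\subset A_{n+1}$. Once this is established, an immediate induction gives $\tilde{\partial}^k L^f \in A_{k+1}$, and condition (2) together with the multiplicativity of the degree homomorphism supplies the required pointwise bound.

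The first step is to verify that $\tilde{\partial}$ is a derivation, since by the product rule
\[\tilde{\partial}(fg) = (\tilde{\partial}f)\,g + f\,(\tilde{\partial}g)\]
directly from the definition \eqref{eqn: tilde-der}. Consequently, to prove $\tilde{\partial}(A_n)\subset A_{n+1}$ I only need to check the claim on the generators $\log x$, $T(h_n)$, and $r\cdot T(h_n)$, and on coefficients $c(a)\in C^\infty(D(M_f))$ (which are annihilated by $\tilde{\partial}$ only up to the $\partial_a$-piece, whose degree is unchanged). For $\log x$: a direct computation gives $\tilde{\partial}(\log x) = L^f = T(h_1)\in A_1 \subset A_2$. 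For $T(h_n)$, the first identity in Lemma \ref{lemma: Lf deriv recursion} combined with the recursive definition \eqref{eqn: hn-def} (so that $S(h_n)=h_{n+1}$) yields
\[\tilde{\partial}T(h_n) = T(h_{n+1}) - (a+r)\,L^f\, T(h_n) - \log x\cdot T(h_n) + h_n\,L^f,\]
and each term on the right lies in $A_{n+1}$: $T(h_{n+1})$ is a generator of degree $n+1$; $a\,L^f\, T(h_n) = a\,T(h_1)T(h_n)$ has degree $n+1$; $r\,L^f\,T(h_n) = (rT(h_1))\,T(h_n)$ is a product of two generators, of total degree $n+1$; $\log x\cdot T(h_n)$ has degree $n+1$; and $h_n\cdot L^f$ is a polynomial of degree $n$ in $\log x$ times the degree-$1$ generator $T(h_1)$.

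For $r\cdot T(h_n)$, Leibniz gives $\tilde{\partial}(rT(h_n)) = (\tilde{\partial}r)\,T(h_n) + r\cdot\tilde{\partial}T(h_n)$. The first piece lies in $A_{n+1}$ by hypothesis (1), and the second piece is handled by expanding $\tilde{\partial}T(h_n)$ with the display above and multiplying through by $r$: each product reorganizes into elements of $F$ multiplied by generators (for example $r^2 L^f T(h_n) = (rT(h_1))(rT(h_n))$ and $r\log x\cdot T(h_n) = \log x\cdot (rT(h_n))$), with total degree at most $n+1$. This completes Step 1 and therefore Step 2 by iteration.

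For Step 3, condition (2) bounds each of the generators $T(h_n)$ and $rT(h_n)$ by $C_n(a)(1+|\log x|^n)$, and $\log x$ trivially satisfies an analogous bound of degree $1$. Since products of such bounds behave additively in the exponent of $|\log x|$ and the degree function is a homomorphism into $(\Z_+,+,\max)$, any $g\in A_m$ admits an estimate $|g(a,x)|\le C(a)(1+|\log x|^m)$ with $C(a)$ continuous in $a$. Applied to $g=\tilde{\partial}^k L^f \in A_{k+1}$, this yields exactly the bound required by Hypothesis \ref{hyp: L moments}. The main obstacle is the algebraic bookkeeping in Step 1, in particular ensuring that the factor $r^2 L^f T(h_n)$ arising when one multiplies $\tilde{\partial}T(h_n)$ by $r$ can be written as a product of elements of $F$ (since $r$ alone is not in $\mathcal{A}(F)$); the key is to keep the $r$'s paired with $T$-factors throughout, which uses precisely the structure built into the generating set.
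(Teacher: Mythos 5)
Your proposal is correct and follows essentially the same route as the paper: establish that $\tilde{\partial}$ is a derivation, verify $\tilde{\partial}(A_n)\subset A_{n+1}$ generator by generator using Lemma \ref{lemma: Lf deriv recursion} (keeping $r$ paired with $T$-factors so that products stay inside $\mathcal{A}(F)$), and then combine condition (2) with the degree homomorphism to get the pointwise bound; the paper packages the last step via the sub-algebra $\mathcal{B}$, but the content is identical. One small note: the paper's proof of Lemma \ref{lemma: sufficient for L hyp} has a sign typo $(a-r(x))$ where your $(a+r)$, taken from Lemma \ref{lemma: Lf deriv recursion}, is the correct one.
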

Then $f$ satisfies Hypothesis \ref{hyp: L moments}.
\begin{proof}
We first claim that 
\begin{equation}
\text{ The operator } \tilde{\p} \text{ maps } A_n\rightarrow A_{n+1} \text{ for all } n\in \N.\label{statement: An to An plus 1}
\end{equation}
To see this, notice that $\tilde{\p}$ satisfies a product rule:
\[
\tilde{\p}(g\cdot h)=(\tilde{\p}g)\cdot h+g\cdot (\tilde{\p}h),
\]
and it maps $C^\infty(D(M_f))\rightarrow C^\infty(D(M_f))$.  Thus, to show \eqref{statement: An to An plus 1}, it suffices to show $\tilde{\p}(\log x)\in A_1$ and for all $n\in \N$,\,\, $\tilde{\p}(T(h_n))$ and $\tilde{\p}\left(r\cdot T(h_n)\right)$ are in $A_{n+1}$. 

Clearly, $\tilde{\p}(\log x)=L^f=T(h_1)\in F$ has degree $1$ by definition, so it is in $A_1$.  By Lemma \ref{lemma: Lf deriv recursion},
\begin{align}
\tilde{\p}(T(h_n))=T(h_{n+1})-\left[(a-r(x))L^f +\log x\right]T(h_n)+h_n \cdot L^f\in A_{n+1}
\end{align}
since $T(h_{n+1})\in A_{n+1}$, $T(h_n)\in A_n$, and $r\cdot L^f, L^f, \log x\in A_1$.  Additionally, using $r\cdot T(h_{n+1})\in A_{n+1}$ and $r\cdot T(h_n)\in A_n$ we see that $r\cdot \t{\p}(T(h_n))\in A_{n+1}$ as well.  By assumption, $\t{\p}(r)\cdot T(h_n)\in A_{n+1}$, so the product rule implies 
\[
\t{\p}\left(r\cdot T(h_n)\right)\in A_{n+1},
\]
which completes \eqref{statement: An to An plus 1}.  

Now define 
\[
\mathcal{B}:=\left\{g\in \mathcal{A}(F): \text{ there exists } c\in C(D(M_f)) \text{ for which  } |g(a,x)|\leq c(a)\left(1+|\log x|^{\deg(g)}\right)\right\}.
\]
$\mathcal{B}$ is a sub-algebra of $\mathcal{A}(F)$, which clearly contains $\log x$. By assumption 2. in the statement of the Lemma,
\begin{equation}
F\subset \mathcal{B}\label{statement: F subset B},     
\end{equation}
which implies $\mathcal{B}=\mathcal{A}(F)$.  Now $L^f\in A_1$ and 
\eqref{statement: An to An plus 1} implies $(\t{\p})^n L^f\in A_{n+1}\subset \mathcal{B}$ which completes the proof.
\end{proof}

We now prove Theorem \ref{theorem: confirming hypothesis L-moments}.

\begin{proof}[Proof of Theorem \ref{theorem: confirming hypothesis L-moments}]
By Lemma \ref{lemma: f to g}, it suffices to consider only the functions $f(x)=e^{-bx}$, $f(x)=(1-x)^{b-1}\ind_{\{0<x<1\}}$, and $f(x)=\left(\frac{x}{1+x}\right)^b$ for $b>0$. We check that the assumptions of Lemma \ref{lemma: sufficient for L hyp} are satisfied in these three cases.

First note that since $h_n(a,x)$ is an $n$-th degree polynomial in $\log x$ with coefficients that are smooth in $a$, for every $n\in \N$ there exists some $\t{C}_n\in C(D(M_f))$ such that 
\begin{equation}
|h_n(a,x)|\leq \t{C}_n(a)\left(1+|\log x|^n\right) \text{ for all } (a,x)\in D(M_f)\times \supp(f).\label{eqn: hn ub}
\end{equation}
Moreover, by \eqref{eqn: hn integrate to 0}, 
\begin{equation}
\int_0^x h_n(a,y)y^{a-1}f(y)\mathrm{d}y=\int_x^\infty h_n(a,y)y^{a-1}f(y)\mathrm{d}y \text{ for all } x\geq 0.\label{eqn: h_n reverse}
\end{equation}
\underline{Case 1}: $f(x)=e^{-bx}$.   Here $\supp(f)=(0,\infty)=D(M_f)$, $f$ is clearly smooth on $(0,\infty)$, and $r(x)=-bx$. Notice that 
\[
\tilde{\p}r=-bxL^f=r\cdot L^f
\]
lies in $A_1$ by definition.  So clearly
the first assumption of Lemma \ref{lemma: sufficient for L hyp} holds. 
 We now check the second assumption of Lemma \ref{lemma: sufficient for L hyp} is satisfied.  When $0<x\leq 1$, using \eqref{eqn: hn ub} and $a>0$,
\begin{align*}
|T(h_j)(a,x)|\leq \frac{e^{bx}}{x^a}\int_0^x|h_n(a,y)|y^{a-1}e^{-y}\mathrm{d}y &\leq \frac{e^b \t{C}_j(a)}{x^a}\int_0^x\left(1+|\log y|^j\right)y^{a-1}\mathrm{d}y\\
&\leq C_j(a)\left(1+|\log x|^j\right).
\end{align*}

When $x>1$, using \eqref{eqn: hn ub}, \eqref{eqn: h_n reverse}, followed by the substitution $y\mapsto \frac{y}{x}-1$, and finally an application of integration by parts yields

\begin{align*}
|T(h_j)(a,x)| &\leq\frac{e^{bx} \t{C}_j(a)}{x^a}\int_x^\infty\left(1+|\log y|^j\right)y^{a-1}e^{-by}\mathrm{d}y\\
&= \t{C}_j(a)\int_0^\infty \left(1+|\log(y+1)+\log x|^j\right)y^{a-1}e^{-bxy}\mathrm{d}y.\\
&=\frac{\t{C}_j(a)}{b x}\left(1+|\log 2|^j+|\log x|^j \right) +O(\frac{1}{x^2})\\
&\leq \frac{\tilde{C}_j(a)}{bx}\left(1+|\log x|^j\right),
\end{align*}
where we increased $\t{C}_j(a)$ in the last step if necessary.  Combining these two bounds yields the desired result, completing the proof for Case 1. 

\underline{Case 2}: $f(x)=(1-x)^{b-1}\ind_{\{0<x<1\}}$.   Here $\supp(f)=(0,1)$, $D(M_f)=(0,\infty)$, $f$ is clearly smooth on $(0,1)$, and $r(x)=-(b-1)\frac{x}{1-x}$.  To see 
that the first assumption in Lemma \ref{lemma: sufficient for L hyp} holds, notice that 
\[
\tilde{\p}r=-(b-1)\frac{x}{(1-x)^2}L^f=r\cdot \left( 1+\frac{r}{1-b}\right)L^f.
\]
Thus $\t{\p}r \cdot T(h_j)=(r\cdot L^f)\cdot T(h_j)+\frac{1}{b-1} (r\cdot L^f)\cdot (r\cdot T(h_j))\in A_{j+1}$ since $r\cdot L^f \in A_1$, and $r\cdot T(h_j)\in A_j$ by definition.  We now check the second assumption of Lemma \ref{lemma: sufficient for L hyp} is satisfied.  By \eqref{eqn: hn ub}, we have the bounds
\[
\left|h_j(a,y)y^{a-1}f(y)\right|\leq {\small \begin{cases}
\t{C}_j(a)\big(1+|\log y |^j\big)y^{a-1} &\mbox{if } 0<y<\frac{1}{2}\\
\t{C}_j(a)(1-y)^{b-1} &\mbox{if } \frac{1}{2}\leq y <1.
\end{cases}}
\]

Since $a>0$, for $0 <x< \frac{1}{2}$,
\begin{equation}
\left|T(h_j)(a,x)\right|\leq \frac{2^a C_j(a)}{x^a}\int_0^x(1+|\log y|^j)y^{a-1}\mathrm{d}y\leq \tilde{C}_j(a)\left(1+|\log x|^j\right)\label{T bound beta case 1}    
\end{equation}
Similarly, using equation \eqref{eqn: h_n reverse}, for $\frac{1}{2}\leq x<1$,
\begin{equation}
    \left|T(h_j)(a,x)\right|\leq \frac{2^a \t{C}_j(a)}{(1-x)^{b-1}}\int_x^1 (1-y)^{b-1} \mathrm{d}y \leq \t{C}_j(a) (1-x) \label{eqn: T bound beta case 2}.\\
\end{equation}
where we increased $\t{C}_j(a)$ if necessary.
Thus, for all $0<x<1$, $|r(x)|\leq |b-1|\frac{1}{1-x}$ implies
\[
\left|T(h_j)(a,x)\right|\vee \left|r(x) T(h_j)(a,x)\right|\leq C_j(a)\left(1+|\log x|^j\right).
\]
This completes the proof for case 2.

\underline{Case 3}: $f(x)=\left(\frac{x}{1+x}\right)^b$.   Here $\supp(f)=(0,\infty)$, $D(M_f)=(-b,0)$, $f$ is clearly smooth on $(0,\infty)$, and $r(x)=b\frac{1}{1+x}$.  To see the first assumption of Lemma \ref{lemma: sufficient for L hyp} is satisfied,  notice that 
\[
\tilde{\p}r=-b\frac{x}{(1+x)^2}L^f=-r\cdot \left(1-\frac{r}{b}\right)L^f.
\]
Thus $\t{\p}r\cdot T(h_j)=-(r\cdot L^f)\cdot T(h_j)+\frac{1}{b} (r\cdot L^f)\cdot (r\cdot T(h_j))\in A_{j+1}$ since $r\cdot L^f \in A_1$, and $r\cdot T(h_j)\in A_j$ by definition.  We now check the second assumption of Lemma \ref{lemma: sufficient for L hyp}.  By \eqref{eqn: hn ub}, we have the bounds
\[
\left|h_j(a,y)y^{a-1}f(y)\right|\leq {\small \begin{cases}
\t{C}_j(a)\big(1+|\log y |^j\big)y^{a+b-1} &\mbox{if } 0<y<1\\
\t{C}_j(a)y^{a-1} &\mbox{if } 1\le  y <\infty .
\end{cases}}
\]

Since $a+b>0$, for $0 <x< 1$,
\begin{equation}
\left|T(h_j)(a,x)\right|\leq \frac{2^b C_j(a)}{x^{a+b}}\int_0^x(1+|\log y|^j)y^{a+b-1}\mathrm{d}y\leq \tilde{C}_j(a)\left(1+|\log x|^j\right)\label{T bound beta case 3}    
\end{equation}
Similarly, using equation \eqref{eqn: h_n reverse}, for $1\leq x<\infty $,
\begin{equation}
    \left|T(h_j)(a,x)\right|\leq \frac{2^b \t{C}_j(a)}{x^a}\int_x^\infty y^{a-1} \mathrm{d}y \leq \tilde{C}_j(a) \label{eqn: T bound beta case 4}.\\
\end{equation}
where we increased $\tilde{C}_j(a)$ if necessary.
Thus, for all $0<x<\infty$, $|r(x)|\leq |b|$ implies
\[
\left|T(h_j)(a,x)\right|\vee \left|r(x) T(h_j)(a,x)\right|\leq C_j(a)\left(1+|\log x|^j\right).
\]
This completes the proof for case 3.

\end{proof}

% \section{Some computations}
% Look at the integral
% \[\int_x^\infty (\psi_0(\theta)-\log y) y^{\theta-1} e^{-y}\,\mathrm{d}y.\]
% We estimate this. First we treat
% \[x^{-\theta}e^x\int_x^\infty e^{-y}y^{\theta-1}\mathrm{d}y.\]
% Shifting the integral, we have
% \[x^{-\theta}e^x\int_x^\infty e^{-y}y^{\theta-1}\mathrm{d}y= x^{-\theta}\int_0^{\infty} e^{-y}(x+y)^{\theta-1}\mathrm{d}y.\]
% Rescaling $y\mapsto yx$, we have
% \[x^{-\theta}\int_0^{\infty} e^{-y}(x+y)^{\theta-1}\mathrm{d}y=\int_0^\infty e^{-xy} (1+ y)^{\theta-1}\,\mathrm{d}y.\]
% By integration by parts, the last integral is of order $1/x$:
% \[\int_0^\infty e^{-xy} (1+ y)^{\theta-1}\,\mathrm{d}y=\frac{1}{x}+\frac{\theta-1}{x}\int_0^\infty e^{-xy}(1+y)^{\theta-2}\,\mathrm{d}y=\frac{1}{x}+O(1/x^2).\]
% For the integral
% \[x^{-\theta}e^x \int_x^\infty y^{\theta-1} \log y\  e^{-y}\,\mathrm{d}y, \]
% we have a similar treatment. First shifting by $x$, we have
% \[x^{-\theta}\int_0^\infty e^{-y} \log(x+y) (x+y)^{\theta-1}\mathrm{d}y\]
% Resacaling, we have
% \begin{align*}\int_0^\infty (\log x+\log(1+y))(1+y)^{\theta-1} e^{-xy}\,\mathrm{d}y=&\log x \int_0^\infty (1+y)^{\theta-1}e^{-xy}\,\mathrm{d}y\\
% &+ \int_0^\infty (1+y)^{\theta-1} \log (1+y) e^{-xy}\,\mathrm{d}y.
% \end{align*}
% The first term is bounded by
% \[\frac{\log x}{x}.\]
% The second integral can also be treated by integration by parts:
% \[\int_0^\infty (1+y)^{\theta-1} \log (1+y) e^{-xy}\,\mathrm{d}y=\int_0^\infty e^{-xy} \theta(1+y)^{\theta-2}\,\mathrm{d}y\]

\section{Finite exponential moments for the free energy}
\begin{lemma}\label{lemma: fin exp. moments for log Z}
Assume the polymer environment is such that $|\log R^1|$, $|\log R^2|$, $|\log Y^1|$, and $|\log Y^2|$ all have finite exponential moments.   Then,  
\[
\left|\log Z_{m,n}\right| \text{ has finite exponential moments for all } (m,n)\in Z_+^2.
\]
\end{lemma}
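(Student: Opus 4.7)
The plan is to sandwich $\log Z_{m,n}$ between two quantities that are each sums of boundedly many logarithms of individual edge weights, and then use the hypothesis that every $|\log \omega_e|$ has finite exponential moments together with H\"older's inequality.

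First, for the upper bound, I would use the trivial crude estimate
\[
Z_{m,n} \;\le\; |\Pi_{m,n}| \cdot \max_{x_\centerdot \in \Pi_{m,n}} \prod_{i=1}^{m+n} \omega_{e_i}
\;\le\; \binom{m+n}{m} \prod_{e \in \mathcal{E}_{m,n}} \bigl(\omega_e \vee 1\bigr),
\]
where $\mathcal{E}_{m,n}$ is the (finite) set of all edges lying in the rectangle $[0,m]\times [0,n]$. Taking logarithms yields
\[
\log Z_{m,n} \;\le\; \log \binom{m+n}{m} + \sum_{e \in \mathcal{E}_{m,n}} \log(\omega_e \vee 1) \;\le\; C(m,n) + \sum_{e \in \mathcal{E}_{m,n}} |\log \omega_e|.
\]
For the lower bound, fix any single up-right path $x^0_\centerdot \in \Pi_{m,n}$ (say, the L-shaped path that goes horizontally to $(m,0)$ and then vertically to $(m,n)$). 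Restricting the sum defining $Z_{m,n}$ to this one path gives
\[
\log Z_{m,n} \;\ge\; \sum_{i=1}^{m+n} \log \omega_{e_i^0} \;\ge\; -\sum_{e \in \mathcal{E}_{m,n}} |\log \omega_e|.
\]

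Combining these bounds,
\[
|\log Z_{m,n}| \;\le\; C(m,n) + \sum_{e \in \mathcal{E}_{m,n}} |\log \omega_e|,
\]
so it remains to show that the finite sum on the right has finite exponential moments. For any $t>0$, write $K:=|\mathcal{E}_{m,n}|<\infty$. By the generalized H\"older inequality,
\[
\E\Bigl[\exp\Bigl(t\sum_{e \in \mathcal{E}_{m,n}}|\log \omega_e|\Bigr)\Bigr] \;\le\; \prod_{e \in \mathcal{E}_{m,n}} \E\bigl[e^{tK|\log \omega_e|}\bigr]^{1/K}.
\]
Each factor on the right is finite because every edge weight $\omega_e$ is distributed as one of $R^1, R^2, Y^1, Y^2$, each of whose absolute log has finite exponential moments by hypothesis. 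Consequently $\E[e^{t|\log Z_{m,n}|}] \le e^{tC(m,n)} \prod_e \E[e^{tK|\log \omega_e|}]^{1/K} < \infty$ for every $t>0$, which is the claim.

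The proof has no real obstacle: the only point worth flagging is the use of $\max_\pi \prod \omega_e \le \prod(\omega_e \vee 1)$, which avoids needing any positivity or concentration beyond what is assumed. Independence of the weights is not required because H\"older's inequality suffices once there are only finitely many factors.
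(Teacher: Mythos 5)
Your argument is correct in substance, but takes a genuinely different route from the paper's proof. The paper proceeds by induction, using the recursion $Z_x = Y^1_x Z_{x-\alpha_1} + Y^2_x Z_{x-\alpha_2}$ to sandwich $\log Z_x$ between $\max\bigl(\log Y^1_x + \log Z_{x-\alpha_1},\,\log Y^2_x + \log Z_{x-\alpha_2}\bigr)$ and the same quantity plus $\log 2$, then applies Cauchy--Schwarz (or H\"older) and the inductive hypothesis at each step. You instead bypass the recursion entirely with a global sandwich: a single fixed path gives the lower bound and the product of $(\omega_e \vee 1)$ over the whole rectangle, times $\binom{m+n}{m}$, gives the upper bound, after which one application of generalized H\"older finishes. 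Both arguments are elementary and valid; yours is arguably a bit more direct since it avoids the induction, at the cost of a cruder (combinatorially larger) constant $C(m,n)$, which is immaterial here since $m,n$ are fixed.

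One small slip worth correcting: the last sentence asserts $\E[e^{t|\log Z_{m,n}|}]<\infty$ \emph{for every} $t>0$. That is not what your argument gives (nor what the hypothesis supports): the hypothesis only guarantees that $\E[e^{s|\log \omega_e|}]<\infty$ for $s$ in some small interval $(0,s_0]$, so your H\"older bound is finite only for $t \le s_0/K$. The correct (and entirely sufficient) conclusion is that $|\log Z_{m,n}|$ has finite exponential moments for $t$ sufficiently small, which matches the convention of Remark \ref{remark - Mellin consequences}. With that restriction your proof is complete.
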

\begin{proof}
Since $\log Z_{0,0}=0$, $\log Z_{k,0}=\sum_{i=1}^k R^1_{i,0}$, and $\log Z_{0,k}= \sum_{j=1}^k R^2_{0,j}$, $\log Z_x$ has finite exponential moments for any $x\in Z_+^2\setminus \N^2$.  When $x\in \N^2$, the recursion \eqref{eqn: recursion 2} implies that 
\[
\left(\log Y_x^1 +\log Z_{x-\alpha_1}\right)\wedge \left(\log Y_x^2 +\log Z_{x-\alpha_2}\right)\leq \log Z_x-\log 2\leq \left(\log Y_x^1 +\log Z_{x-\alpha_1}\right)\vee \left(\log Y_x^2 +\log Z_{x-\alpha_2}\right).
\]
Thus 
\[
|\log Z_x -\log 2|\leq \left|\log Y_x^1 +\log Z_{x-\alpha_1}\right|\vee \left|\log Y_x^2 +\log Z_{x-\alpha_2}\right|.
\]
Since $|\log Y^1_x|$ and $|\log Y^2_x|$ have finite exponential moments, and inductive argument finishes the proof.
\end{proof}

%\bibliography{ref}

\end{document}